\newtheorem{thm}{Theorem}[section]
\newtheorem{lemma}[thm]{Lemma}
\newtheorem{corollary}[thm]{Corollary}
\newtheorem{proposition}[thm]{Proposition}
\newtheorem{thmdefi}[thm]{Theorem - Definition}
\newtheorem{step}[thm]{}
\newtheorem*{thm*}{Theorem}
\theoremstyle{definition}
\newtheorem{definition}[thm]{Definition}
\newtheorem{example}[thm]{Example}
\newtheorem{remark}[thm]{Remark}
\newtheorem{parg}[thm]{}
\newtheorem{notation}[thm]{Notation}
\newtheorem{construction}[thm]{Construction}
\newtheorem*{claim}{Claim}
\newtheorem{pargtwo}{}[thm]
\newcommand{\ph}{\varphi}
\newcommand{\w}{\widetilde}
\newcommand{\ma}{\mathcal}
\newcommand{\la}{\longrightarrow}
\newcommand{\ol}{\mathcal{O}}
\newcommand{\wi}{\widehat}
\newcommand{\pr}{\mathbb{P}}
\newcommand{\Q}{\mathbb{Q}}
\newcommand{\R}{\mathbb{R}}
\newcommand{\Z}{\mathbb{Z}}
\newcommand{\N}{\mathcal{N}_1}
\newcommand{\Nu}{\mathcal{N}^1}
\newcommand{\dom}{\operatorname{dom}}
\newcommand{\Sing}{\operatorname{Sing}}
\newcommand{\Pic}{\operatorname{Pic}}
\newcommand{\NE}{\operatorname{NE}}
\newcommand{\Exc}{\operatorname{Exc}}
\newcommand{\codim}{\operatorname{codim}}
\newcommand{\Eff}{\operatorname{Eff}}
\newcommand{\Nef}{\operatorname{Nef}}
\newcommand{\Mov}{\operatorname{Mov}}
\newcommand{\mov}{\operatorname{mov}}
\patchcmd{\section}{\normalfont}{\normalfont\large}{}{}
\patchcmd{\subsection}{\bfseries}{\scshape\centering}{}{}
\patchcmd{\subsection}{-.5em}{.5em}{}{}
\title[Fano $4$-folds with rational fibrations]{Fano $4$-folds with rational fibrations}
\author{C.~Casagrande}
\address{Universit\`a di Torino,
Dipartimento di Matematica,
via Carlo Alberto 10,
10123 Torino - Italy}
\email{cinzia.casagrande@unito.it}
\date{September 17, 2019}
\subjclass[2010]{14J45,14J35,14E30}
\begin{document}
\maketitle
\renewcommand{\theequation}{\thethm}

\setcounter{tocdepth}{1}

{\footnotesize\tableofcontents}

\section{Introduction}
\noindent 
Smooth, complex Fano varieties play an important role in projective geometry, both from the classical and modern point of view, in the framework of the Minimal Model Program.
There are finitely many families of Fano varieties of any given dimension, which are classified up to dimension $3$ --
the classification of Fano $3$-folds was achieved more than 30 years ago, see \cite{fanoEMS} and references therein. In dimensions $4$ and higher there is no classification apart from some special classes, and we still lack a good understanding of the geometry of Fano $4$-folds. 

This paper is part of a program to study Fano $4$-folds $X$ with large Picard number $\rho_X$, by means of birational geometry, more precisely via the study of  contractions and flips of Fano $4$-folds. Our goal is to get a sharp bound on $\rho_X$, and possibly to classify Fano $4$-folds $X$ with ``large'' Picard number. Let us notice that, 
 among the known examples of Fano $4$-folds, products of del Pezzo surfaces have $\rho_X\leq 18$, and the others have $\rho_X\leq 9$ (see \cite{vb} for the case $\rho_X=9$).

In this paper we focus on Fano $4$-folds $X$ having a  rational contraction of fiber type. Here a \emph{contraction}  is a morphism $f\colon X\to Y$ with connected fibers onto a normal projective variety. More generally, a \emph{rational contraction} is a rational map $f\colon X\dasharrow Y$ 
that can be factored as $X\stackrel{\ph}{\dasharrow}X'\stackrel{f'}{\to}Y$, where $X'$ is a normal and $\Q$-factorial projective variety, $\ph$ is birational and an isomorphism in codimension $1$, and $f'$ is a 
contraction. As usual, $f$ is of fiber type if $\dim Y<\dim X$. Note that
  $X$ has
a non-constant rational contraction of fiber type if and only if there is a non-zero, non-big movable divisor.
Our main results are the following.
\begin{thm}\label{main}
Let $X$ be a smooth Fano $4$-fold with a  rational contraction of fiber type
 $f\colon X\dasharrow Y$, where $\dim Y>0$. 
If $Y\not\cong\pr^1$ and $Y\not\cong\pr^2$, then
$\rho_X\leq 18$, with equality only if $X$ is a product of surfaces.
\end{thm}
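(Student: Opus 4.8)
The plan is to reduce the dimension of the base and then bound $\rho_X$ by combining a fiber/base decomposition with the structure theory of Fano manifolds of large Lefschetz defect. First I would note that, since $X$ is Fano it is rationally connected, hence so is the $\Q$-factorial model $X'$ (it is dominated by $X$), and so is its image $Y$ under the fiber type contraction $f'\colon X'\to Y$. As a rationally connected curve is $\pr^1$, the excluded case $Y\cong\pr^1$ means we may assume $\dim Y\in\{2,3\}$, with $Y\not\cong\pr^2$ when $\dim Y=2$. Because $\ph\colon X\dasharrow X'$ is an isomorphism in codimension $1$ it identifies $\mathcal{N}^1(X)$ with $\mathcal{N}^1(X')$, so $\rho_X=\rho_{X'}$ and I may work throughout with the morphism $f'$. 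A general fiber $F$ of $f'$ lies where $\ph$ is an isomorphism, so it is a smooth fiber of a fibration on the Fano $X$; by adjunction $-K_F$ is ample, hence $F$ is a del Pezzo surface when $\dim Y=2$ (so $\rho_F\leq 9$) and $F\cong\pr^1$ when $\dim Y=3$.

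Next I would set up the numerical bookkeeping. For a fiber type contraction the pullback $f'^*\colon\mathcal{N}^1(Y)\to\mathcal{N}^1(X')$ is injective, giving $\rho_X=\rho_{X'}=\rho_Y+\rho_{X'/Y}$ with $\rho_{X'/Y}$ the relative Picard number. The relative term is comparatively tractable: it is governed by the general fiber $F$ together with the finitely many reducible fibers, and the Fano condition keeps it bounded (by $9$ in the surface case, by a smaller quantity in the conic bundle case $\dim Y=3$). The genuine difficulty is that $\rho_Y$ is a priori unbounded: $Y$ is only a rational, resp.\ rationally connected, surface or threefold, and such varieties have arbitrarily large Picard number. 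Thus the whole force of the hypothesis that the \emph{total space} $X$ is Fano must be used to bound $\rho_Y$, and this is the main obstacle of the proof.

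To control $\rho_Y$ I would pass to the Lefschetz defect $\delta_X=\max_D\codim_{\mathcal{N}_1(X)}\mathcal{N}_1(D,X)$, the maximum running over prime divisors $D\subset X$, and invoke the known structure theorem: if $\delta_X\geq 4$ then $X\cong S\times T$ with $S$ a del Pezzo surface, $\rho_S=\delta_X+1$, and $T$ a Fano surface, whence $\rho_X=\rho_S+\rho_T\leq 9+9=18$, with equality exactly when $S$ and $T$ are both del Pezzo of degree $1$. The remaining, and hardest, step is to show that under the fibration hypothesis a small Lefschetz defect forces a small Picard number, concretely that $\delta_X\leq 3$ implies $\rho_X\leq 17$. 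Here I would argue that when $\rho_Y$ is large the fibration produces a prime divisor $D$ --- a suitable component of the $f'$-preimage of a contractible curve of $Y$, or a horizontal divisor dominating $Y$ --- for which $\mathcal{N}_1(D,X)$ has large codimension; a careful analysis, carried out separately for $\dim Y=2$ and $\dim Y=3$ and using $Y\not\cong\pr^2$, should convert largeness of $\rho_Y$ into $\delta_X\geq 4$.

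Combining the two halves gives the statement: if $\rho_X\geq 18$ then necessarily $\delta_X\geq 4$, so $X\cong S\times T$ as above and $\rho_X\leq 18$; hence $\rho_X=18$ and both factors are del Pezzo surfaces of degree $1$, which is precisely the asserted product of surfaces. I expect the conic bundle case $\dim Y=3$ to yield a strictly better bound, so that the extremal value $18$ is attained only for a surface fibration $\dim Y=2$, consistently with the two projections of a product $S\times T$. The crux throughout is the passage from the a priori unbounded $\rho_Y$ to a bound, for which the Fano condition on $X$ enters only indirectly, through the Lefschetz defect and the product structure theorem.
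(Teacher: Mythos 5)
Your outer skeleton is sound --- reduce to $\dim Y\in\{2,3\}$ via rational connectedness, quote the product structure theorem for $\delta_X\geq 4$, and split $\rho_X=\rho_Y+(\rho_X-\rho_Y)$ --- but the central step is missing, and the mechanism you propose for it cannot work as stated. You reduce everything to the claim that ``$\delta_X\leq 3$ plus the fibration hypothesis implies $\rho_X\leq 17$,'' to be proved by producing, when $\rho_Y$ is large, a prime divisor $D$ with $\codim\N(D,X)\geq 4$. But by Th.~\ref{buonconsiglio} the cases $\delta_X\in\{2,3\}$ already give $\rho_X\leq 12$, so the only critical regime is $\rho_X\geq 13$ with $\delta_X\leq 1$ --- and there, by the very definition of $\delta_X$, \emph{no} prime divisor with $\codim\N(D,X)\geq 2$ exists, so ``largeness of $\rho_Y$ converts into $\delta_X\geq 4$'' is a contradiction you can never reach by exhibiting such a divisor inside $X$. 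The paper's actual mechanism is different: it bounds $\rho_Y$ and the number of reducible fibers directly. One first factors the rational contraction through a \emph{special} one (Prop.~\ref{factorization1}; this is also what preserves the hypothesis $Y\not\cong\pr^2$, since the proof of Th.~\ref{surf} shows the special target is $\pr^2$ only if $Y$ is), which makes the bookkeeping $\rho_X=\rho_Y+\dim\N(F,X)+\sum_i(r_i-1)$ of Cor.~\ref{boundrho} available at all --- for an arbitrary $\Q$-factorial model $f'$, as in your setup, divisors may map to codimension~$2$ and $Y$ need not be $\Q$-factorial, so your ``relative term governed by the general fiber plus finitely many reducible fibers'' is not yet meaningful. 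Then, when reducible fibers exist, the classification of fixed prime divisors in Fano $4$-folds with $\rho_X\geq 7$ forces a component of type $(3,2)$ with $\N(E,\w{X})\subsetneq\N(\w{X})$, and Lemma \ref{prop32} concludes $\rho_X\leq 12$ or $X$ a product --- a statement imported from the prior classification work, not a $\delta_X\geq 4$ argument.

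Concretely, everything you flag as ``the genuine difficulty'' is left unproved, and it is the bulk of the paper: the quasi-elementary case over a surface (giving the range $13\leq\rho_X\leq 17$ in Th.~\ref{surf}$(iii)$ via results of \cite{fanos,eff}, not via a naive $\rho_F\leq 9$ count --- note $\rho_X-\rho_Y$ is \emph{not} simply bounded by $\rho_F$, because of the $\sum(r_i-1)$ term); and, over a $3$-fold, the long analysis of the case $\rho_X-\rho_Y=2$, $m=1$, which runs an MMP on $Y$ itself, shows every elementary divisorial contraction of $Y$ is the blow-up of a smooth point and every small one is $K$-trivial, reduces to $\rho_{Y_1}\leq 2$, and finishes with the degree bound $(-K_{Y_1})^3\leq 72$ giving at most $8$ blown-up points and $\rho_X\leq 12$ (Th.~\ref{3folds}). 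Your final synthesis (``$\rho_X\geq 18\Rightarrow\delta_X\geq 4$'') therefore rests on an unproven and, in the stated form, unprovable implication; the correct conclusion comes instead from combining Th.~\ref{surf} (with case $(iv)$ excluded by $Y\not\cong\pr^2$) and Th.~\ref{3folds}, both of which cap the non-product cases at $\rho_X\leq 17$ and $\rho_X\leq 12$ respectively, leaving $\rho_X=18$ only for products of del Pezzo surfaces.
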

\begin{thm}\label{rationalfibration}
Let $X$ be a smooth Fano $4$-fold. Suppose that there exists a dominant rational map $f\colon X\dasharrow Y$, regular and proper on an open subset of $X$, with $\dim Y=3$. Then either $X$ is a product of surfaces, or $\rho_X\leq 12$.
\end{thm}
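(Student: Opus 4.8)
The plan is to deduce Theorem \ref{rationalfibration} from the more general Theorem \ref{main}, and the essential task is to convert the given data -- a dominant rational map $f\colon X\dasharrow Y$ with $\dim Y=3$ that is regular and proper on some open subset -- into an honest rational contraction of fiber type to which Theorem \ref{main} applies, while keeping track of the possibilities for the base. First I would resolve $f$ to a morphism: take a sequence of blow-ups $\w{X}\to X$ so that $f$ lifts to a genuine morphism $\w{f}\colon\w{X}\to Y$, and then apply Stein factorization to assume the general fiber is connected. The general fiber of $f$ is then a curve, so we are in the $\dim Y=\dim X-1$ situation. The class of the movable divisor obtained by pulling back an ample divisor on $Y$ is non-zero and non-big (its top self-intersection vanishes since it is pulled back from a threefold), so $X$ genuinely carries a rational contraction of fiber type onto a $3$-fold $Y'$ birational to $Y$.

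\smallskip
Next I would analyze the base. Theorem \ref{main} gives $\rho_X\leq 18$ with equality only for products of surfaces, but here the stronger conclusion $\rho_X\leq 12$ is asserted (outside products), so the crux is to exploit that $\dim Y=3$ specifically, and in particular that $Y$ is a threefold rather than $\pr^1$ or $\pr^2$. The two excluded bases $\pr^1,\pr^2$ in Theorem \ref{main} do not arise here precisely because $\dim Y=3$, so Theorem \ref{main} applies directly and already yields $\rho_X\leq 18$. To sharpen $18$ down to $12$ I would examine the geometry of the general fiber $F$, a smooth curve, together with the structure of the rational contraction: I expect that having a $3$-dimensional base forces $F$ to be rational (one can check that the canonical bundle of $X$ restricted to a general fiber is anti-ample, so $-K_X\cdot F>0$ and $F\cong\pr^1$ by adjunction on the general fiber). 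Thus $X$ admits, birationally, a structure closer to a conic or $\pr^1$-bundle over the threefold $Y$, and I would feed the numerical invariants of this fibration (the relative Picard number and the geometry of the divisorial part of the exceptional locus) back into the inequality.

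\smallskip
The main obstacle, I expect, is the gap between $18$ and $12$: Theorem \ref{main} by itself is not strong enough, so the proof must use finer information coming from the codimension-one behaviour of $f$, namely the structure of the movable and effective cones of $X$ and the contractions/flips produced by running an MMP for the pulled-back divisor class. Concretely, I anticipate estimating $\rho_X$ by combining $\rho_{Y'}$ with the number and type of the divisorial contractions contracted by the birational modification $\ph\colon X\dasharrow X'$, and showing that a base of dimension $3$ leaves too little room in $\Nu(X)$ for $\rho_X$ to exceed $12$ unless $X$ splits as a product of surfaces. Establishing this sharp bound -- and in particular controlling the contributions of the exceptional divisors of the resolution and of the small modification $\ph$ to $\rho_X$ -- is where the real work lies; the reduction to Theorem \ref{main} and the rationality of the general fiber are comparatively routine.
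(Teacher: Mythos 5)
Your first paragraph is essentially the paper's own reduction: the paper also restricts $f$ to $X_0\to Y_0$, takes Stein factorization, pulls back an ample class to get a movable divisor $H=f^*A$, and uses the Mori dream space structure of $X$ to produce a rational contraction $f'\colon X\dasharrow Y'$ with $\dim Y'=3$ (checking the dimension by noting $f'_{|X_0}=f_0$, which is cleaner than your detour through a resolution $\w{X}\to X$, but the difference is cosmetic). Up to that point your proposal is sound.

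The genuine gap is in everything after that. You deduce only $\rho_X\leq 18$ from Theorem \ref{main} and then explicitly defer the descent from $18$ to $12$ to a heuristic sketch (rationality of the general fiber, a conic-bundle-like structure, an MMP for the pulled-back class), conceding that this is ``where the real work lies.'' But the paper's Theorem \ref{3folds} already states exactly what you need: any rational contraction $X\dasharrow Y$ with $\dim Y=3$ forces $X$ to be a product of surfaces or $\rho_X\leq 12$. The weaker constant $18$ in Theorem \ref{main} comes only from the surface-base case (Theorem \ref{surf}$(iii)$), so citing Theorem \ref{main} here both loses the sharp bound and misidentifies where the difficulty sits. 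With the correct citation, Theorem \ref{rationalfibration} is a two-line consequence of the reduction you already carried out; without it, your proposal implicitly commits you to reproving the whole of Section \ref{sec_3folds} --- the analysis via special contractions and Lemma \ref{klt} reducing to $\rho_X-\rho_Y\leq 2$, the classification of small and divisorial elementary contractions of $Y$ (Steps \ref{small} and \ref{divisorial}), the conclusion that $Y$ is weak Fano, and finally Prokhorov's bound $(-K_{Y_1})^3\leq 72$ giving $\rho_X\leq 12$. Your intuitions (the general fiber is $\pr^1$, the map is birationally close to a conic bundle, exceptional divisors of the modification must be counted in $\rho_X$) do point at genuine ingredients of that argument, but none of them is developed into a proof, so as written the proposal establishes only $\rho_X\leq 18$, not the stated theorem.
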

Let us say something on the techniques and strategy used in the paper. We consider the following classes of rational contractions of fiber type:
$$\{\text{``quasi-elementary''}\}\ \subset\  \{\text{``special''}\}\ 
\subset\ \{\text{general}\}.$$
Quasi-elementary rational contractions of fiber type have been introduced in \cite{fanos,eff} (see \S \ref{secquasiel} for more details); when $f$ is quasi-elementary Th.~\ref{main} is already known (ibidem), and  one can even allow $Y\cong\pr^1$ and $Y\cong\pr^2$.

 In this paper we introduce a more general notion, that of ``special'' rational contraction of fiber type, which plays a key role in the proof of Th.~\ref{main}. We define special (regular and rational) contractions
in \S \ref{secspecial};
then we show that every rational contraction of fiber type of a Mori dream space can be factored as a special rational contraction, followed by a birational map (Prop.~\ref{factorization1}). In particular, if a Fano variety has a  rational contraction of fiber type, then it also has a  special rational contraction of fiber type, so that we can reduce to prove Th.~\ref{main} when $f$ is special. 

Secondly, we show that
 up to flips, every special rational contraction of a Mori dream space can be factored as a sequence of elementary divisorial contractions, followed by a quasi-elementary contraction (Th.~\ref{factorization2}). This allows to relate the study of special rational contractions of Fano $4$-folds $X$ to our previous study  of elementary divisorial contractions and quasi-elementary contractions of $4$-folds obtained from $X$ with a sequence of flips, in \cite{eff,blowup}.

Another key ingredient used in the paper is the Lefschetz defect $\delta_X$, an invariant of $X$  which basically allows to bound $\rho_X$ in terms of the Picard number of prime divisors in $X$ (see \S \ref{prelLD} for an account).

After developing the necessary techniques and preliminary results in \S\S \ref{special} - \ref{sanfrancisco}, we prove Th.~\ref{main} first in the case where $\dim Y=2$ in \S \ref{sec_surf}, and then in the case where $\dim Y=3$ in \S \ref{sec_3folds}. Th.~\ref{rationalfibration} is then an easy consequence of the  case where $\dim Y=3$.

\medskip

\noindent{\bf Acknowledgments.} I am grateful to St\'ephane Druel for important suggestions.
\subsection{Notation and terminology}\label{terminology}
\noindent 
If $\mathcal{N}$ is a finite-dimensional real vector space and $a_1,\dotsc,a_r\in \mathcal{N}$, $\langle a_1,\dotsc,a_r\rangle$ denotes the convex cone in $\mathcal{N}$ generated by $a_1,\dotsc,a_r$.
Moreover, for every $a\neq 0$, $a^{\perp}$ is the hyperplane orthogonal to $a$ in the dual vector space $\ma{N}^*$.

We refer the reader to \cite{hukeel} for the notion of Mori dream space;  \emph{we always assume that a Mori dream space is projective, normal and $\Q$-factorial}. We recall that Fano varieties are Mori dream spaces by \cite[Cor.~1.3.2]{BCHM}.
We also refer
 to \cite{kollarmori} for the standard notions in birational geometry, in particular the definition of flip \cite[Def.~6.5]{kollarmori}

Let $X$ be a normal and $\Q$-factorial projective variety.

A small $\Q$-factorial modification (SQM) is a birational map $\ph\colon X \dasharrow X'$ which is an isomorphism in codimension one, where $X'$ is a normal and $\Q$-factorial projective variety. If $X$ is a Mori dream space, every SQM can be factored as a finite sequence of flips.

Let $f\colon X\to Y$ be an elementary contraction, namely a contraction with $\rho_X-\rho_Y=1$. We say that $f$ is of type $(a,b)$ if $\dim\Exc(f)=a$ and $\dim f(\Exc(f))=b$. We say that $f$ is of type 
$(\dim X-1,b)^{sm}$ if it is the blow-up of a smooth $b$-dimensional subvariety of $Y$, contained in $Y_{reg}$. If $X$ is a smooth $4$-fold, we say that $f$ is of type  $(3,0)^Q$ if $f$ is of type $(3,0)$, $\Exc(f)$ is isomorphic to an irreducible quadric $Q$, and $\mathcal{N}_{\Exc(f)/X}\cong \ol_Q(-1)$.

Let $D$ be a divisor.
A contraction $f\colon X\to Y$ is $D$-negative (respectively, $D$-positive) if there exists $m\in\Z_{>0}$ such that $-mD$ (respectively, $mD$) is Cartier and $f$-ample. A $D$-negative flip is the flip of a small, $D$-negative elementary contraction, and similarly for $D$-positive. \emph{We do not assume that contractions or flips are $K$-negative, unless specified.} 

When $X$ is a Mori dream space, given a contraction $f\colon X\to Y$ and a divisor $D$ in $X$, one can run a MMP for $D$ relative to $f$. This means that there exists a birational map $\psi\colon X\dasharrow X'$, given by a composition of $D$-negative flips and elementary divisorial contractions, such that $f':=f\circ\psi^{-1}\colon X'\to Y$ is regular, and
if $D'$ is the transform of $D$ in $X'$, then 
either $D'$ is $f'$-nef, or $f'$ factors through a $D'$-negative elementary contraction of fiber type of $X'$. 

A movable divisor is an effective divisor $D$ such that the stable
 base locus of the linear system $|D|$  has codimension $\geq 2$. 
A fixed prime divisor is a prime divisor $D$ which is the stable base locus of 
$|D|$. 
We will consider the usual cones of divisors and of curves: $$\Nef(X)\subseteq\Mov(X)\subseteq\Eff(X)\subset\Nu(X),\qquad
\mov(X)\subseteq \NE(X)\subset\N(X),$$ where 
all the notations are standard except $\mov(X)$, which is the convex cone generated by classes of curves moving in a family covering $X$.
When $X$ is a Mori dream space, all these cones are closed, rational and polyhedral. If $D$ is a divisor and $C$ is a curve in $X$, we denote by $[D]\in\Nu(X)$ and $[C]\in\N(X)$ their numerical equivalence classes.

For every closed subset $Z\subset X$, we denote by $\N(Z,X)$ the linear subspace of $\N(X)$ spanned by classes of curves contained in $Z$. We will use the following simple property.
\begin{remark}\label{caffe}
Let $D$ be a prime divisor. If $Z\cap D=\emptyset$, then $\N(Z,X)\subseteq D^{\perp}$, in particular $\N(Z,X)\subsetneq \N(X)$. This is because $D\cdot C=0$ for every curve $C\subset Z$.
\end{remark}
Let $X$ be a smooth $4$-fold. An \emph{exceptional plane} is a closed subset $L\subset X$ such that $L\cong\pr^2$ and $\ma{N}_{L/X}\cong\ol_{\pr^2}(-1)^{\oplus 2}$; an \emph{exceptional line} is a closed subset $\ell\subset X$ such that $\ell\cong\pr^1$ and $\ma{N}_{\ell/X}\cong\ol_{\pr^1}(-1)^{\oplus 3}$. 
\section{Special contractions of fiber type}\label{special}
\noindent When studying Fano varieties, or more generally Mori dream spaces, one often needs to consider contractions of fiber type $f\colon X\to Y$ which are not elementary. In full generality, such contractions are hard to deal with, in particular $Y$ may be very singular and/or non $\Q$-factorial. For this reason, it is useful to introduce some classes of contractions of fiber type with good properties, which should include the elementary case.
A first notion of this type is that of ``quasi-elementary'' contraction; we briefly recall this definition and some properties in \S \ref{secquasiel}.

 Here we introduce a more general notion, that of ``special'' contraction of fiber type. 
In \S \ref{secspecial}
we define special contractions, in the regular and rational case; the target  is automatically $\Q$-factorial. 

In \S \ref{secfactorizations} we show 
two factorization results for rational contractions of fiber type of Mori dream spaces. More precisely, we show
that every rational contraction of fiber type of a Mori dream space can be factored as a special rational contraction, followed by a birational map (Prop.~\ref{factorization1}). Moreover, up to flips, every special rational contraction of a Mori dream space can be factored as a sequence of elementary divisorial contractions, followed by a quasi-elementary contraction (Th.~\ref{factorization2}).

Finally, in \S \ref{secsingularities} we consider  special contractions of fiber type $f\colon X\to Y$ which are also $(K+\Delta)$-negative for a suitable boundary $\Delta$ on $X$, and we show that if $X$ has good singularities, then $Y$ has good singularities too.
\subsection{Quasi-elementary contractions}\label{secquasiel}
\noindent We refer the reader to \cite[\S 2.2]{eff} and \cite{fanos} for the notion of quasi-elementary  contraction of fiber type; here we just recall the definition.
\begin{definition}[quasi-elementary contraction]
Let $X$ be a normal and $\Q$-factorial projective variety and $f\colon X\to Y$ a contraction of fiber type. We say that $f$ is quasi-elementary if for every fiber $F$ of $f$ we have $\N(F,X)=\ker f_*$, where $f_*\colon\N(X)\to\N(Y)$ is the push-forward of one-cycles (see \S \ref{terminology} for $\N(F,X)$).
\end{definition}
Let us give an equivalent characterization, for Mori dream spaces.
\begin{proposition}\label{quasiel}
Let $X$ be a  Mori dream space  and $f\colon X\to Y$ a contraction of fiber type. 
The following are equivalent:
\begin{enumerate}[$(i)$]
\item $f$ is quasi-elementary;
\item for every prime divisor $D$ in $X$, either $f(D)=Y$, or $D=\lambda f^*B$ for some $\Q$-Cartier prime divisor $B$ in $Y$ and $\lambda\in\Q_{> 0}$;
\item $Y$ is $\Q$-factorial and for every prime divisor $B$ in $Y$, the pull-back $f^*B$ is irreducible (but possibly non-reduced).
\end{enumerate}
\end{proposition}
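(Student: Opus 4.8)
The plan is to introduce the auxiliary condition that every $f$-vertical prime divisor has numerical class in $f^{*}\Nu(Y)$, and to show that each of $(i)$, $(ii)$, $(iii)$ is equivalent to it. First I record the standard facts. Since $f$ is a contraction, $f^{*}\colon\Nu(Y)\to\Nu(X)$ is injective with image $(\ker f_{*})^{\perp}$ by the projection formula, so dually $\ker f_{*}=(f^{*}\Nu(Y))^{\perp}$ and $\dim\ker f_{*}=\rho_X-\rho_Y$. Call a prime divisor $D$ vertical if $f(D)\neq Y$; such a $D$ is disjoint from a general fiber, so $[D]\in\N(F,X)^{\perp}$ for a general fiber $F$, and I write $V\subseteq\Nu(X)$ for the span of the classes of vertical prime divisors. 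If $f(D)$ is a prime divisor $B$, then $D$ is a component of $f^{-1}(B)$; the possibility $\codim_Y f(D)\geq 2$ is excluded under each of our hypotheses, as such a $D$ cannot satisfy $(ii)$, and I fold this into the divisorial upgrade below. The whole statement then reduces to the single claim that each of $(i)$, $(ii)$, $(iii)$ holds if and only if $V\subseteq f^{*}\Nu(Y)$.

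Assuming $(iii)$, a vertical prime $D$ maps onto a prime divisor $B=f(D)$, which is $\Q$-Cartier, and the irreducible divisor $f^{*}B$, being supported on $f^{-1}(B)\supseteq D$, must equal $\mu D$; hence $D=\lambda f^{*}B$ with $\lambda=\mu^{-1}$, giving $(ii)$ and in particular $[D]\in f^{*}\Nu(Y)$. The content of the reverse bookkeeping is the \emph{upgrade}: from the numerical statement $[D]\in f^{*}\Nu(Y)$ I must deduce that $B=f(D)$ is $\Q$-Cartier and that $f^{-1}(B)$ has $D$ as its only divisorial component. Granting this, $f^{*}B=\mu D$ yields simultaneously that $Y$ is $\Q$-factorial, together with $(ii)$ and $(iii)$. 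For this step I would use that $X$ is a Mori dream space: $f^{*}B$ is $f$-numerically trivial, and the polyhedrality and finiteness built into a Mori dream space, combined with the numerical assumption, pin down its support and descend the $\Q$-Cartier property to $B$; this is also where one obtains equidimensionality of $f$, hence $\codim_Y f(D)=1$.

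If $f$ is quasi-elementary then $\N(F,X)=\ker f_{*}$ for a general fiber $F$, so $\N(F,X)^{\perp}=f^{*}\Nu(Y)$; since $V\subseteq\N(F,X)^{\perp}$, we get $V\subseteq f^{*}\Nu(Y)$. For the converse I would first reduce to the general fiber: curve classes in a general fiber specialize into every fiber through the relative Hilbert scheme, so $\N(F,X)\subseteq\N(f^{-1}(y),X)$ for all $y$ when $F$ is general; hence $\N(f^{-1}(y),X)=\ker f_{*}$ for every $y$ as soon as it holds for a general fiber. It then suffices, assuming $V\subseteq f^{*}\Nu(Y)$, to show that a divisor with $D|_{F}\equiv 0$ for a general fiber $F$ has $[D]\in f^{*}\Nu(Y)$; equivalently, that the kernel of the restriction $\Nu(X)\to\Nu(F)$ equals $f^{*}\Nu(Y)+V$ (the inclusion $\supseteq$ being clear). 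Combined with the hypothesis this gives $\N(F,X)^{\perp}=f^{*}\Nu(Y)=(\ker f_{*})^{\perp}$, i.e. $\N(F,X)=\ker f_{*}$.

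The main obstacle is precisely this kernel computation: that a divisor numerically trivial on a general fiber lies in $f^{*}\Nu(Y)+V$. My plan is to pass to a dense open $U\subseteq Y$ over which $f$ carries no vertical divisor and over which $\N(f^{-1}(y),X)$ is constant, equal to $\N(F,X)$ for a general fiber $F$ (possible by semicontinuity); there $D$ is numerically trivial on every fiber, hence $f$-numerically trivial, and the standard descent of $f$-numerically trivial classes produces $\beta\in\Nu(Y)$ with $D-f^{*}\beta$ supported over $Y\setminus U$, so that $[D]-f^{*}\beta\in V$. Making this descent rigorous over an open base, and controlling the finitely many special fibers, is where the Mori dream space hypothesis is essential. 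With the kernel computation and the divisorial upgrade in hand, $(i)$, $(ii)$, $(iii)$ are all equivalent to $V\subseteq f^{*}\Nu(Y)$, and the proposition follows.
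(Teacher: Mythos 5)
Your reduction to the auxiliary condition ``every vertical prime divisor has class in $f^*\Nu(Y)$'' is in itself a reasonable reorganization, and several pieces of it do match the paper: the implication $(i)\Rightarrow$ condition via disjointness from a general fiber is exactly the paper's argument, and your ``divisorial upgrade'' (from $[D]\in f^*\Nu(Y)$ to $D=\lambda f^*B$ with $B=f(D)$ $\Q$-Cartier) is a true statement, which the paper does not reprove but cites as \cite[Rem.~2.9]{eff} (descent of numerically $f$-trivial divisor classes on a Mori dream space, where numerical and $\Q$-linear equivalence agree); your appeal to unspecified ``polyhedrality and finiteness'' at that point is vague but points at a genuine fact. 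The genuine gap is in the hardest step, condition $\Rightarrow(i)$, i.e.\ your kernel computation $\ker\bigl(\Nu(X)\to\Nu(F)\bigr)=f^*\Nu(Y)+V$. Your proposed proof rests on a dense open $U\subseteq Y$ over which $\N(f^{-1}(y),X)$ is constant, ``possible by semicontinuity''. This is not a semicontinuity statement: curves in special fibers need not deform to nearby fibers, so the jumping locus of $y\mapsto\N(f^{-1}(y),X)$ is a priori only a countable union of proper closed subsets, and constancy holds for \emph{very general} $y$, not over any open set; you never invoke the Mori dream space hypothesis in a way that repairs this. The subsequent step is equally unjustified: even granting that $D$ is numerically trivial on all fibers over $U$, concluding that $D-f^*\beta$ is \emph{supported} over $Y\smallsetminus U$ requires a descent of relatively numerically trivial classes over a quasi-projective base plus passage from numerical to linear equivalence on $f^{-1}(U)$, none of which is ``standard'' in the generality you need. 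So the two sentences carrying the whole converse are placeholders for the actual difficulty.

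The paper sidesteps fiberwise analysis entirely and works with the global polyhedral geometry of $\Eff(X)$: by \cite[Lemma 2.21 and Prop.~2.22]{eff}, $\sigma:=\Eff(X)\cap\N(F,X)^{\perp}$ is the minimal face of $\Eff(X)$ containing $f^*(\Nef(Y))$, and $f$ is quasi-elementary if and only if $\dim\sigma=\rho_Y$; if $f$ is not quasi-elementary one extracts a one-dimensional face $\tau\not\subseteq f^*\Nu(Y)$, whose generating prime divisor is disjoint from $F$ (hence vertical) and is not a pullback, contradicting $(ii)$. If you want to salvage your kernel identity rather than quote \cite{eff}, there is a correct global route along the same lines: for a rational class $\alpha\perp\N(F,X)$, every vertical generator of the polyhedral cone $\mov(X)$ is the class of a family covering $X$, hence lies in $\N(F,X)$, so $\alpha+mf^*A\in\mov(X)^{\vee}=\Eff(X)$ for $m\gg 0$ and $A$ ample on $Y$; thus $\alpha\in\operatorname{span}\sigma$, and since $\sigma$ is a face of the polyhedral cone $\Eff(X)$, it is generated by classes of prime divisors orthogonal to $\N(F,X)$, which are vertical. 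This replaces your open-base descent by duality with the moving cone and is where the Mori dream space hypothesis actually enters. As written, though, your proposal has a real hole at its central claim, not merely missing details.
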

\begin{proof}
Let $F\subset X$ be a general fiber of $f$.

 $(i)\Rightarrow (iii)\ $ The target $Y$ is $\Q$-factorial by \cite[proof of Rem.~2.26]{eff}. Let $B$ be a prime divisor in $Y$, and let $D$ be an irreducible component of $f^*B$.  Then $D\cap F=\emptyset$, so that $\N(F,X)\subseteq D^{\perp}$ by Rem.~\ref{caffe}. Since $f$ is quasi-elementary, we have $\N(F,X)=\ker f_*$, hence 
$\ker f_*\subseteq D^{\perp}$, and  $D$ is the pull-back of a $\Q$-divisor in $Y$ (see \cite[Rem.~2.9]{eff}). Since $B=f(D)$, we must have  $D=\lambda f^*B$ with $\lambda\in\Q_{> 0}$, so $f^*B$ is irreducible.

\medskip

  $(ii)\Rightarrow (i)\ $
Let $\sigma$ be the minimal face of $\Eff(X)$ containing $f^*(\Nef(Y))$;  by \cite[Lemma 2.21 and Prop.~2.22]{eff} we have
 $\sigma=\Eff(X)\cap\N(F,X)^{\perp}$, and $f$ is quasi-elementary if and only if 
 $\dim\sigma=\rho_Y$.

Suppose that $f$ is not quasi-elementary.
Then $\dim\sigma>\rho_Y$, so that $\sigma\not\subseteq f^*\Nu(Y)$, and there exists a one-dimensional face $\tau$ of $\sigma$ such that $\tau \not\subseteq f^*\Nu(Y)$. Let $D\subset X$ be a prime divisor with $[D]\in\tau$. Then  $D$ is not the pull-back of a $\Q$-Cartier prime divisor in $Y$. 
On the other hand, we also have $[D]\in \N(F,X)^{\perp}$, so that
$D\cdot C=0$ for every curve $C\subset F$. Since $F\not\subset D$, we must have $F\cap D=\emptyset$, hence $f(D)\subsetneq Y$.

\medskip

  $(iii)\Rightarrow (ii)\ $ Let $D\subset X$ be a prime divisor which does not dominate $Y$. Let $B\subset Y$ be a prime divisor containing $f(D)$. Then $B$ is $\Q$-Cartier, and $D$ is an irreducible component of $f^*B$, hence $f^*B=\mu D$ with $\mu\in\Q_{> 0}$.
\end{proof}
\subsection{Special contractions}\label{secspecial}
\begin{definition}[special  contraction]
Let $X$ be a normal and $\Q$-factorial projective variety and $f\colon X\to Y$ a contraction of fiber type. We say that $f$ is special if for every prime divisor $D\subset X$ we have that either $f(D)=Y$, or $f(D)$ is a $\Q$-Cartier prime divisor in $Y$.
\end{definition}
\begin{remark}\label{Qfact}
Let $X$ be a normal and $\Q$-factorial projective variety and $f\colon X\to Y$ a contraction of fiber type. Then $f$ is special if and only if the following conditions hold:
\begin{enumerate}[(1)]
\item $\codim f(D)\leq 1$ for every prime divisor $D\subset X$;
\item $Y$ is $\Q$-factorial.
\end{enumerate}
\end{remark}
Condition (1) above is not enough to ensure that $Y$ is $\Q$-factorial, as the following simple example shows.
\begin{example}
Set $Z:=\pr_{\pr^2}(\ol\oplus\ol(1)\oplus\ol(1))$, $X:=Z\times\pr^1$, and let $\pi\colon X\to Z$ be the projection. Then $Z$ has a small elementary contraction $g\colon Z\to Y$, and $f:=g\circ\pi\colon X\to Y$ satisfies (1) but not (2), in particular it is not special. Note that $X$ is Fano and $f$ is $K$-negative.
\end{example}
\begin{remark}\label{unico}
Let $X$ be a normal and $\Q$-factorial projective variety and $f\colon X\to Y$ a contraction of fiber type.
\begin{enumerate}[$(a)$]
\item 
 If $X$ is a Mori dream space and $f$ is elementary, or quasi-elementary, then $f$ is special by Prop.~\ref{quasiel}.
\item\label{equidimensional}
If $f$ is special, then the locus where $f$ is not equidimensional has codimension at least $3$ in $Y$.  
\item\label{ind}
Let $f$ be  special, and $\ph\colon X\dasharrow X'$ a SQM such that $f':=f\circ\ph^{-1}$ is regular. Then $f'$ is special.
\end{enumerate}
\end{remark}
The following is a consequence of \cite[Lemma 2.6]{druelcodone}.
\begin{lemma}\label{kollar}
Let $X$ be a normal and $\Q$-factorial projective variety and $f\colon X\to Y$ a contraction of fiber type. If $f$ is equidimensional, then $Y$ is $\Q$-factorial and $f$ is special.
\end{lemma}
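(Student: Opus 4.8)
The plan is to derive everything from Remark~\ref{Qfact}, according to which $f$ is special precisely when (1) $\codim f(D)\leq 1$ for every prime divisor $D\subset X$, and (2) $Y$ is $\Q$-factorial. Since (2) is also the first assertion of the lemma, it suffices to prove (1) and (2). I would dispose of (1) by a direct dimension count, and reserve the real work for (2), where I would appeal to \cite[Lemma~2.6]{druelcodone}.

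First I would check (1). Put $n:=\dim X$ and $d:=n-\dim Y$; as $f$ is of fiber type we have $d\geq 1$, and equidimensionality means that every fiber of $f$ has dimension exactly $d$. Let $D\subset X$ be a prime divisor with $f(D)\neq Y$. The induced surjection $f|_D\colon D\to f(D)$ has general fiber of dimension at least $\dim D-\dim f(D)=(n-1)-\dim f(D)$, and this fiber is contained in a fiber of $f$, so has dimension at most $d$. Hence $(n-1)-\dim f(D)\leq d$, that is $\dim f(D)\geq n-1-d=\dim Y-1$; since $f(D)\subsetneq Y$, this forces $\dim f(D)=\dim Y-1$, so $\codim f(D)=1$ and (1) holds.

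The substance lies in (2), the $\Q$-factoriality of $Y$, and the mechanism I would use is the following. Given a prime divisor $B\subset Y$, equidimensionality forces $f^{-1}(B)$ to be of pure codimension $1$ in $X$, so a pull-back Weil divisor $f^*B$ is defined, and since $X$ is $\Q$-factorial there is $m\in\Z_{>0}$ with $mf^*B$ Cartier. For a general fiber $F=f^{-1}(y)$ of $f$ we have $y\notin B$, so $F\cap\Supp(f^*B)=\emptyset$ and therefore $\ol_X(mf^*B)|_F\cong\ol_F$: the line bundle $\ol_X(mf^*B)$ is trivial on the general fiber of the contraction $f$. Because $f$ is a contraction (so $f_*\ol_X=\ol_Y$, $Y$ being normal with connected fibers), such a line bundle descends to $Y$, yielding that $mB$ is Cartier, so that $Y$ is $\Q$-factorial; this descent is exactly the content I would import from \cite[Lemma~2.6]{druelcodone}. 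With (1) and (2) in hand, Remark~\ref{Qfact} shows that $f$ is special. I expect the descent of $\ol_X(mf^*B)$ along $f$ to be the only delicate step, whereas (1) and the reduction through Remark~\ref{Qfact} are formal.
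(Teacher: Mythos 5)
Your overall skeleton is sound and matches the paper's (very short) argument: the paper proves this lemma by simply citing \cite[Lemma~2.6]{druelcodone}, and the missing glue you supply --- the dimension count showing $\codim f(D)\leq 1$ for every prime divisor $D$, plus the reduction through Remark~\ref{Qfact} --- is correct and is exactly what the paper leaves implicit. The genuine gap is in what you claim to import from \cite[Lemma~2.6]{druelcodone}. The principle you state --- ``a line bundle on $X$ trivial on the general fiber of a contraction $f$ descends to $Y$ because $f_*\ol_X=\ol_Y$'' --- is false, and the paper itself contains a counterexample: in the Example following Remark~\ref{Qfact}, $X=Z\times\pr^1$ is a smooth Fano $4$-fold, $f=g\circ\pi\colon X\to Y$ is a $K$-negative contraction of fiber type, and $Y$ is \emph{not} $\Q$-factorial. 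Take a prime divisor $B\subset Y$ that is not $\Q$-Cartier, and let $D$ be the closure of the pull-back of $B$ over the (big) open locus where $B$ is Cartier. Then $D$ is Cartier ($X$ is smooth) and disjoint from the general fiber, so $\ol_X(D)$ is trivial on the general fiber; yet $\ol_X(D)$ cannot descend, since a descended Cartier divisor would agree with $B$ over the Cartier locus, hence equal $B$ (the complement has codimension $\geq 2$), forcing $B$ to be Cartier --- contradiction. Your step (2) uses equidimensionality only to define $f^*B$, and thereafter only that $f$ is a contraction; so your argument would run verbatim in this Example and ``prove'' its target $\Q$-factorial.

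The point is that triviality on the \emph{general} fiber says nothing over special fibers (in the Example the failure is concentrated over the $2$-dimensional fiber, which is precisely where equidimensionality is violated), and even numerical $f$-triviality on \emph{all} contracted curves does not give descent for free: in Lemma~\ref{lf} the paper obtains descent from \cite[Th.~3.7(4)]{kollarmori} only thanks to $(K+\Delta)$-negativity, which you have not assumed. Druel's Lemma~2.6 is not a general-fiber descent statement; it is essentially the full hard content here, namely that an equidimensional dominant morphism of normal varieties with $\Q$-factorial source has $\Q$-factorial target, with equidimensionality entering the heart of the descent argument rather than merely the definition of $f^*B$. So the repair is simple --- cite that lemma as a black box for $\Q$-factoriality of $Y$ and keep your dimension count and Remark~\ref{Qfact}, which is exactly the paper's route --- but as written, the step you yourself single out as ``the only delicate step'' is justified by a false principle. (A minor further point: ``equidimensionality forces $f^{-1}(B)$ to be of pure codimension $1$'' also deserves care, since components of $f^{-1}(B)$ not dominating $B$ must be ruled out; the standard fix, which equidimensionality licenses, is to define $f^*B$ as the closure of the pull-back over the Cartier locus of $B$, whose complement has small preimage.)
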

\begin{definition}[special rational contraction]
Let $X$ be a normal and $\Q$-factorial projective variety  and $f\colon X\dasharrow Y$ a rational contraction of fiber type.  We say that $f$ is  special if there
exists a SQM $\ph\colon X\dasharrow X'$ such that $f':=f\circ\ph^{-1}$ is regular and special. 
\end{definition}
\begin{remark}\label{Qfactrat} If $f\colon X\dasharrow Y$ is special, then:
\begin{enumerate}[--]
\item  $Y$ is $\Q$-factorial, by Rem.~\ref{Qfact};
\item 
 for every SQM $\ph\colon X\dasharrow X'$ such that $f':=f\circ\ph^{-1}$ is regular, we have that $f'$ is special, 
by Rem.~\ref{unico}$(\ref{ind})$. 
\end{enumerate}
\end{remark}
In the next subsection we will prove the following characterization of special rational contractions of Mori dream spaces.
\begin{proposition}\label{milano}
Let $X$ be a Mori dream space and $f\colon X\dasharrow Y$ a rational contraction of fiber type. Then $f$ is special if and only if $f$ cannot be factored as: 
$$\xymatrix{X\ar@{-->}[r]_{g}\ar@{-->}@/^1pc/[rr]^{f}&{Z}\ar@{-->}[r]_h&
{Y}
}$$
where $g$ is a rational contraction, $h$ is birational, and $\rho_Z>\rho_Y$.
\end{proposition}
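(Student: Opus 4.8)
The plan is to establish the two implications separately, each by contraposition. Throughout I would fix a regular representative $f'\colon X'\to Y$ of $f$ (so $X'$ is a SQM of $X$ and $f'=f\circ\ph^{-1}$), and use freely that prime divisors of the various SQMs of $X$ are identified by strict transform, with a common image in $Y$. I would also rely on the existence of $\Q$-factorializations for the (mildly singular) varieties occurring here, available by \cite{BCHM}, and on a flattening/equidimensionalization of $f$ realized inside the Mori dream space structure of $X$.

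\emph{Factorization $\Rightarrow$ not special.} For this implication I would argue by contradiction: assume a factorization $X\overset{g}{\dasharrow}Z\overset{h}{\dasharrow}Y$ as in the statement exists and that $f$ is nevertheless special. Then $Y$ is $\Q$-factorial by Rem.~\ref{Qfactrat}, and after replacing $Z$ by a small $\Q$-factorialization (which only enlarges $\rho_Z$) I may assume that $Z$ is $\Q$-factorial too. On a common resolution $W\to Z$, $W\to Y$ of $h$, the two expressions $\rho_W=\rho_Z+\#\{\text{$p$-exc.}\}=\rho_Y+\#\{\text{$q$-exc.}\}$, valid because both $Z$ and $Y$ are $\Q$-factorial, combined with $\rho_Z>\rho_Y$, show that some $q$-exceptional divisor is not $p$-exceptional; hence $h$ contracts a prime divisor $B\subset Z$, i.e. $\codim_Y h(B)\geq2$. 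Since $g$ is dominant of fiber type, the closure $D$ of the $g$-preimage of a general point of $B$ is a prime divisor of $X$ with $f(D)=h(B)$; transporting it to $X'$ gives a prime divisor $D'$ with $\codim_Y f'(D')\geq2$, contradicting condition~(1) of Rem.~\ref{Qfact}.

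\emph{Not special $\Rightarrow$ factorization.} For the converse I would insert between $X$ and $Y$ a \emph{$\Q$-factorial equidimensional model} of $f$. Flattening $f'$ over $Y$ should produce a birational morphism $h\colon Z\to Y$ together with a rational contraction $g\colon X\dasharrow Z$ satisfying $f=h\circ g$ and admitting an equidimensional regular representative; applying Lemma~\ref{kollar} to that representative, $Z$ is then automatically $\Q$-factorial and $g$ is special. The map $h$ cannot be an isomorphism, since otherwise $f\cong g$ would be special, against the hypothesis. Finally, a non-isomorphic birational morphism $h\colon Z\to Y$ with $Z$ $\Q$-factorial always satisfies $\rho_Z>\rho_Y$: if $h$ contracts a divisor this is immediate, while if $h$ is small then $Y$ fails to be $\Q$-factorial and $h$ is a non-trivial $\Q$-factorialization, which again strictly raises the Picard number. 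Thus $\rho_Z>\rho_Y$, and $X\overset{g}{\dasharrow}Z\overset{h}{\to}Y$ is a factorization of the required shape.

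I expect the main obstacle to be the construction of the intermediate model $Z$ in the second implication. The delicate point is that the equidimensionalization of $f$ must be carried out \emph{within} the class of $\Q$-factorial projective varieties that are targets of rational contractions of $X$, so that $g$ is a genuine rational contraction in the sense of \S\ref{terminology} and Lemma~\ref{kollar} applies verbatim; in particular I would have to ensure that the flattening can be realized through SQMs and elementary divisorial contractions rather than through an arbitrary birational modification, and it is exactly here that the Mori dream space hypothesis on $X$ is essential. By comparison the first implication is soft, the only subtlety being the harmless reduction to $Z$ $\Q$-factorial that legitimizes the Picard-number comparison.
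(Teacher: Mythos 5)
Your first implication is essentially correct, and it is more self-contained than the paper's treatment: the paper deduces \emph{both} directions of Prop.~\ref{milano} from Prop.~\ref{factorization1}, invoking its uniqueness statement exactly where you instead count exceptional divisors on a common resolution. Two repairs are needed there. First, the closure of the $g$-preimage of a general \emph{point} of $B$ is a fiber of $g$, of dimension $\dim X-\dim Z$; what you want is an irreducible component of $\overline{g^{-1}(B)}$ dominating $B$, which is a prime divisor $D$ with $f(D)=h(B)$ of codimension $\geq 2$, contradicting Rem.~\ref{Qfact}. Second, the small $\Q$-factorialization of $Z$ is not literally covered by \cite{BCHM}, since $Z$ carries no klt structure a priori; either invoke that $Z$ is a Mori dream space, being the target of a contraction from one \cite{okawa_MCD}, or avoid the reduction altogether: on a resolution $W$ of $h$ one has $\rho_W\geq\rho_Z+\#\{p\text{-exceptional divisors}\}$ (pullbacks and exceptional classes are independent by the negativity lemma), while $\rho_W=\rho_Y+\#\{q\text{-exceptional divisors}\}$ because $Y$ is $\Q$-factorial; if no divisor of $Z$ were contracted by $h$, every $q$-exceptional divisor would be $p$-exceptional, forcing $\rho_Z\leq\rho_Y$.

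The genuine gap is the second implication, and it is not merely the missing construction you flag: the construction you propose aims at the wrong object. An equidimensional model of $f$ does not in general exist among rational contractions of $X$, because speciality is strictly weaker than equidimensionality: Rem.~\ref{unico}$(b)$ allows the non-equidimensional locus to have codimension exactly $3$ in $Y$, and \S\ref{sec_3folds} is written precisely to accommodate special contractions of Fano $4$-folds onto $3$-folds with isolated $2$-dimensional fibers (cf.\ \cite{AWaview}). If $g\colon X\dasharrow Z$ is special but not equidimensional, then any refinement $X\dasharrow Z_2\to Z$ with $Z_2$ $\Q$-factorial and $Z_2\to Z$ birational, not an isomorphism, has $\rho_{Z_2}>\rho_Z$ --- and this contradicts speciality of $g$ by your own first implication. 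So a Raynaud--Gruson-type flattening cannot be realized through SQMs of $X$, and Lemma~\ref{kollar} cannot serve as the certificate of speciality: your process must terminate at the special model, which may still be non-equidimensional, leaving you with no way to verify speciality. What actually closes this direction is Prop.~\ref{factorization1}, proved by induction on $\rho_X-\rho_Y$ via Construction~\ref{constr}: if $f$ (made regular after a SQM) is not special, choose a prime divisor $D$ with $f(D)\subsetneq Y$ not $\Q$-Cartier, run an MMP for $-D$ relative to $f$, contract the face $\NE(f_W)\cap D_W^{\perp}$ to produce $k\colon T\to Y$ birational and not an isomorphism (so $\rho_T>\rho_Y$), and induct on $X\dasharrow T$, which satisfies $\rho_X-\rho_T<\rho_X-\rho_Y$. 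Reconstructing this relative MMP argument --- targeting speciality directly rather than equidimensionality --- is what your proof is missing.
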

\subsection{Factorizations}\label{secfactorizations}
\noindent We start this subsection with
a construction that will be used in the proofs of two factorization results,
Prop.~\ref{factorization1} and Th.~\ref{factorization2}.
\begin{construction}\label{constr}
Let $X$ be a Mori dream space, $f\colon X\to Y$ a contraction, and $D\subset X$  a prime divisor such that $f(D)\subsetneq Y$.
Let us run a MMP for $-D$, relative to $f$ (see \S \ref{terminology}). We get a commutative diagram:
\stepcounter{thm}
\begin{equation}\label{diagram}
\xymatrix{X\ar[d]_f\ar@{-->}[r]^{\psi}&{W}\ar[dl]_{f_W}\ar[d]^j
\\
Y& T\ar[l]^{k}
}\end{equation}
where $W$ is $\Q$-factorial, $\psi$ is a composition of $D$-positive flips and divisorial contractions (in particular $D$ cannot be exceptional for $\psi$, so it has a proper transform $D_W$ in $W$), and $f_W:=f\circ\psi^{-1}$ is regular. Since $f(D)\subsetneq Y$, the MMP cannot end with a fiber type contraction,  and $-D_W$ is $f_W$-nef. Let $j\colon W\to T$ be the  
   contraction  given by $\NE(f_W)\cap D_W^{\perp}$, so that $f_W$ factors as in \eqref{diagram}; there exists a $\Q$-Cartier prime divisor $D_T$ in $T$ such that $D_W=\lambda j^*D_T$ for some $\lambda\in\Q_{>0}$, and $-D_T$ is $k$-ample.
We have the following properties:
\begin{enumerate}[$(a)$]
\item $k$ is birational, $\Exc(k)\subseteq D_T$, $f(D)=k(D_T)$;
\item 
$f$, $f_W$, and $j$ coincide in the open subset $X\smallsetminus f^{-1}(f(D))$;
\item the divisorial irreducible components of $f^{-1}(f(D))$ are exactly $D$ and the prime exceptional divisors of $\psi$.
\begin{proof}
By construction $\psi$ is a composition of $D$-positive flips and divisorial contractions (relative to $f$), hence the images under $f$ of the exceptional divisors of $\psi$ are all contained in $f(D)$, so these divisors must be divisorial irreducible components of $f^{-1}(f(D))$. On the other hand
$k^{-1}(k(D_T))=D_T$, so $f_W^{-1}(f(D))=j^{-1}(D_T)=D_W$ is irreducible. 
\end{proof}
\item $f^{-1}(f(D))$ has 
$\rho_X-\rho_W+1$  divisorial irreducible components;
\item $k$ is an isomorphism if and only if $f(D)$ is a $\Q$-Cartier prime divisor in $Y$.
\begin{proof}
The ``only if'' direction is clear, because $D_T$ is $\Q$-Cartier and $f(D)=k(D_T)$. For the other, suppose that $f(D)$ is a $\Q$-Cartier prime divisor in $Y$. Since $k^{-1}(f(D))=k^{-1}(k(D_T))=D_T$, we must have
$k^*(f(D))=\mu D_T$, with $\mu\in\Q_{> 0}$. Then $-D_T$ is both $k$-trivial and
$k$-ample, so that $k$ must be an isomorphism.
\end{proof}
\item $\Exc(k)$ is a prime divisor if and only if $\codim f(D)>1$;
\item $k$ is not an isomorphism and $\codim\Exc(k)>1$  if and only if $f(D)$ is a non $\Q$-Cartier prime divisor.
\end{enumerate}
\end{construction}
\begin{proposition}\label{factorization1}
Let $X$ be a Mori dream space and $f\colon X\dasharrow Y$ a rational contraction of fiber type. Then $f$ can be factored as follows:
$$\xymatrix{X\ar@{-->}[r]_{g}\ar@{-->}@/^1pc/[rr]^{f}&{Z}\ar[r]_h&
{Y}
}$$
where $g$ is a special rational contraction, and $h$ is birational.
Moreover, such a factorization is unique up to composition with a SQM of $Z$.
\end{proposition}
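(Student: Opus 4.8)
The plan is to factor $f$ by repeatedly peeling off the "obstructions" to being special, using Construction~\ref{constr} as the engine. Recall that a rational contraction $f\colon X\dasharrow Y$ comes with a SQM $\ph\colon X\dasharrow X'$ such that $f'=f\circ\ph^{-1}$ is a regular contraction; since SQMs do not change $\rho$ and every cone is polyhedral on a Mori dream space, I may replace $X$ by $X'$ and assume $f$ is regular. By Remark~\ref{Qfact}, $f$ is special precisely when $Y$ is $\Q$-factorial and $\codim f(D)\leq 1$ for every prime divisor $D$. If $f$ is already special we are done with $g=f$, $h=\Id$, so assume not. Then there is a prime divisor $D\subset X$ with $f(D)\subsetneq Y$ such that $f(D)$ is \emph{not} a $\Q$-Cartier prime divisor in $Y$ — that is, either $\codim f(D)>1$, or $f(D)$ is a non-$\Q$-Cartier prime divisor. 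In both cases, by Construction~\ref{constr}$(e)$, running a relative MMP for $-D$ produces the diagram~\eqref{diagram} in which the birational map $k\colon T\to Y$ is \emph{not} an isomorphism, while $j\colon W\to T$ is a fiber-type contraction with $\rho_W=\rho_X-(\text{number of divisorial components of }f^{-1}(f(D)))+1$.

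The key point is that this step strictly decreases the relevant complexity. The composite $X\dashrightarrow W\stackrel{j}{\to}T$ is a rational contraction of fiber type (as $\psi$ is an SQM composed with divisorial contractions and $j$ is a contraction), and $k$ is a nontrivial birational morphism $T\to Y$. Because $k$ is birational and not an isomorphism, $\rho_T<\rho_Y$ is \emph{false} in general — rather $k$ contracts something, so $\rho_T\geq\rho_Y$ with the exceptional locus of $k$ living over $f(D)$; the gain is instead measured by $\rho_W$. The clean invariant to induct on is $\rho_X-\rho_Z$ where $Z$ is the intermediate target, together with the number of non-$\Q$-Cartier images: each application of Construction~\ref{constr} replaces the target $Y$ by $T$, which has one fewer obstruction of this kind, while the new map $X\dashrightarrow T$ remains a rational contraction of fiber type. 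Iterating, after finitely many steps — finiteness guaranteed because each step either raises $\rho$ of the target or reduces the number of divisors mapping to high-codimension/non-$\Q$-Cartier loci, and these are bounded — I arrive at a rational contraction $g\colon X\dashrightarrow Z$ for which every prime divisor has image of codimension $\leq 1$ that is $\Q$-Cartier, i.e. $g$ is special, together with a birational $h\colon Z\to Y$ obtained by composing the successive $k$'s.

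For the \textbf{uniqueness} statement, suppose $X\stackrel{g_1}{\dashrightarrow}Z_1\stackrel{h_1}{\to}Y$ and $X\stackrel{g_2}{\dashrightarrow}Z_2\stackrel{h_2}{\to}Y$ are two such factorizations with $g_i$ special and $h_i$ birational. The natural birational map $Z_1\dashrightarrow Z_2$ is $h_2^{-1}\circ h_1$ away from indeterminacy; I would show it is an SQM. The idea is that specialness of $g_i$ forces $Z_i$ to be $\Q$-factorial (Remark~\ref{Qfactrat}) and pins down the boundary behaviour: a prime divisor on $Z_1$ is either dominated by a non-dominating divisor of $X$ (hence its image is forced) or is $g_1$-vertical in a controlled way, and the same on $Z_2$; Proposition~\ref{milano} says exactly that a special rational contraction admits no further factorization through a higher-Picard-number birational target, which rules out $Z_1\dashrightarrow Z_2$ contracting a divisor either way. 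Hence $Z_1\dashrightarrow Z_2$ is an isomorphism in codimension one between $\Q$-factorial varieties, i.e. an SQM.

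The \textbf{main obstacle} is the termination/bookkeeping of the iterative step: I must guarantee that after each application of Construction~\ref{constr} the new map $X\dashrightarrow T$ is genuinely a rational contraction of fiber type (so the construction can be reapplied) and that a well-chosen nonnegative integer invariant — most naturally the number of prime divisors $D\subset X$ whose image under the current map is non-$\Q$-Cartier or of codimension $>1$ — strictly decreases, with no hidden way for new obstructions to appear. The subtle risk is that contracting via $k$ could create new non-$\Q$-Cartier images downstream; controlling this, using that $\Exc(k)\subseteq D_T$ and that $j^{-1}(D_T)=D_W$ is irreducible (Construction~\ref{constr}$(a),(c)$), so that the modification is localized over $f(D)$, is where the real care is needed.
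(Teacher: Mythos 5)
Your overall strategy is the paper's: use Construction~\ref{constr} to peel off divisors with non-$\Q$-Cartier image, then show uniqueness by proving the comparison map between two special factorizations contracts no divisor. But your existence argument has a genuine gap exactly at the point you flag as the ``main obstacle'': termination. The fact you miss is that in Construction~\ref{constr} the birational contraction $k\colon T\to Y$, being not an isomorphism, must contract a curve (a finite birational morphism onto a normal variety is an isomorphism), and a contracted curve has nonzero class in $\N(T)$, so $\ker k_*\neq 0$ and $\rho_T>\rho_Y$ \emph{strictly}. Hence $\rho_X-\rho_T<\rho_X-\rho_Y$, and the paper simply inducts on $\rho_X-\rho_Y\geq 1$ (base case: $f$ elementary, hence special), applying the inductive hypothesis to the whole rational contraction $\tilde{f}=j\circ\psi\colon X\dasharrow T$ and composing the resulting birational map with $k$. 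This dissolves your worry about ``new obstructions appearing downstream'': there is no need to track individual bad divisors at all. Your proposed substitutes do not work: you assert only $\rho_T\geq\rho_Y$ and say ``the gain is instead measured by $\rho_W$,'' but $\rho_W$ need not drop --- if $f(D)$ is a non-$\Q$-Cartier prime divisor and $D$ is the only divisorial component of $f^{-1}(f(D))$, then $\psi$ is a composition of flips and $\rho_W=\rho_X$; and you yourself concede that the count of divisors with non-$\Q$-Cartier or small image is not visibly monotone. So as written the iteration is not shown to terminate.

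Your uniqueness argument has a second problem: invoking Prop.~\ref{milano} is circular, since in the paper Prop.~\ref{milano} is \emph{deduced from} Prop.~\ref{factorization1} --- its proof uses precisely the uniqueness part you are trying to establish. (It also does not obviously apply as you use it: $\ph\colon Z_1\dasharrow Z_2$ contracting a divisor does not by itself produce a factorization of $g_2$ through a birational map from a variety of strictly larger Picard number, which is the shape Prop.~\ref{milano} forbids.) The direct argument is short and avoids this: after composing with an SQM of $X$, assume $g_2\colon X\to Z_2$ is regular; every prime divisor $B\subset Z_1$ is dominated by some prime divisor $D\subset X$ (your ``either/or'' dichotomy is spurious --- $g_1$ is dominant, so such a $D$ always exists); since $h_1$ is birational and $\dim B<\dim Y$ we get $f(D)\subsetneq Y$, hence $g_2(D)\subsetneq Z_2$, and speciality of $g_2$ forces $g_2(D)$ to be a prime divisor, so $\ph$ does not contract $B$; by symmetry $\ph^{-1}$ contracts no divisor either, and since $Z_1,Z_2$ are $\Q$-factorial (Rem.~\ref{Qfactrat}) $\ph$ is an SQM. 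In summary: right engine and right shape of argument, but the strict inequality $\rho_T>\rho_Y$ (and the induction on $\rho_X-\rho_Y$ it enables) is the missing idea in existence, and uniqueness needs the direct divisor-chasing argument rather than Prop.~\ref{milano}.
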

\begin{proof}
To show existence of the factorization, we proceed by induction on $\rho_X-\rho_Y$. 

If $\rho_X-\rho_Y=1$, then $f$ is elementary and hence special, so the statement holds with  $g=f$ and $h=\text{Id}_Y$. 

For the general case, up to composing with a SQM of $X$, we can assume that $f$ is regular.
If $f$ is special, then as before the statement holds with $g=f$. Otherwise, there exists a prime divisor $D$ in $X$ such that $f(D)\subsetneq Y$ and $f(D)$ is not a $\Q$-Cartier divisor in $Y$.

We apply Construction \ref{constr} to $f$ and $D$.
We get a diagram as \eqref{diagram}, where $k$ is not an isomorphism by $(e)$, because 
$f(D)$ is not a $\Q$-Cartier divisor in $Y$; in particular $\rho_T>\rho_Y$.

The composition $\tilde{f}:=j\circ \psi\colon X\dasharrow T$ is a rational contraction of fiber type with $\rho_X-\rho_T<\rho_X-\rho_Y$; by the induction assumption, 
$\tilde{f}$ can be factored as follows:
$$\xymatrix{X\ar[d]_f\ar@{-->}[rd]_{\tilde{f}}\ar@{-->}[r]^{g}&{Z}\ar[d]^{\tilde{h}}\\
Y& T\ar[l]^{k}
}$$
where $g$ is a special rational contraction of fiber type, and $\tilde{h}$ is birational. Then $h:=k\circ \tilde{h}\colon Z\to Y$ is birational, so we have a factorization as in the statement.

To show uniqueness, suppose that $f$ has another factorization $X\stackrel{g'}{\dasharrow} Z'\stackrel{h'}{\to}Y$ with $g'$ special and $h'$ birational; notice that both $Z$ and $Z'$ are $\Q$-factorial by Rem.~\ref{Qfactrat}. We show that the birational map $\ph:=(h')^{-1}\circ h\colon Z\dasharrow Z'$ is a SQM.

Let $B\subset Z$ be a prime divisor. Up to composing $g$ and $g'$ with a SQM of $X$, we can assume that $g'\colon X\to Z'$ is regular. Let $D\subset X$ be a prime divisor dominating $B$ under $g$; then $g'(D)\subsetneq Z'$, and since $g'$ is special, $B':=g'(D)$ is a prime divisor in $Z'$. This means that $\ph$ does not contract $B$. Similarly, we see that $\ph^{-1}$ does not contract divisors, hence $\ph$ is a SQM.
\end{proof}
\begin{proof}[Proof of Prop.~\ref{milano}]
Suppose that $f$ is not special, and consider the factorization of $f$ given 
by Prop.~\ref{factorization1}. Then $h$ cannot be an isomorphism, thus $\rho_Z>\rho_Y$.

Conversely, suppose that $f$
has a factorization as in the statement. By applying Prop.~\ref{factorization1} to $g$, we get a factorization of $f$ as follows:
$$\xymatrix{X\ar@{-->}[r]_{g'}\ar@{-->}@/^1pc/[rrr]^{f}&{Z'}\ar[r]_{h'}&Z\ar@{-->}[r]_h&
{Y}
}$$
where $g'$ is special and $h'$ is birational. Thus $h\circ h'$ is birational with $\rho_{Z'}>\rho_Y$; by the uniqueness part of Prop.~\ref{factorization1}, $f$ is not special.
\end{proof}
\begin{notation}\label{notation}
Let $X$ be a Mori dream space and $f\colon X\to Y$ a special contraction; recall that $Y$ is $\Q$-factorial by Rem.~\ref{Qfact}. 
If $B$ is a prime divisor in $Y$, then every irreducible component of $f^*B$ must dominate $B$.
As the general fiber of $f$ is irreducible, there are at most finitely many prime divisors in $Y$ whose pullback to $X$ is reducible. We fix the notation $B_1,\dotsc,B_m$ for these divisors in $Y$, where $m\in\Z_{\geq 0}$, and we denote by $r_i\in\Z_{\geq 2}$ the number of irreducible components of $f^*B_i$, for $i=1,\dotsc,m$ (we ignore the multiplicities of these components, and ignore the possible prime divisors $B$ such that $f^*B$ is irreducible but nonreduced). Note that by Prop.~\ref{quasiel}, $f$ is quasi-elementary if and only if $m=0$.

Given  a special rational contraction $f\colon X\dasharrow Y$, we will use the same notation $B_1,\dotsc,B_m$ and $r_1,\dotsc,r_m$, with the obvious meaning.
\end{notation}
\begin{thm}\label{factorization2}
Let $X$ be a Mori dream space and $f\colon X\to Y$ a special contraction; notation as in \ref{notation}.
Let $E$ be the union of (arbitrarily chosen) $r_i-1$ components of  $f^*B_i$, for  $i=1,\dotsc,m$. Then
there is a factorization:
$$\xymatrix{X\ar[d]_f\ar@{-->}[r]^g&{X'}\ar[dl]^{f'}\\
Y&
}$$
where $X'$ is projective, normal, and $\Q$-factorial,  $g$ is birational with  $\Exc(g)=E$,\footnote{We denote by $\Exc(g)$ the closure in $X$ of the exceptional locus of $g$ in its domain.} 
the general fiber of $f$ is contained in the open subset where $g$ is an isomorphism,
and $f'$ is quasi-elementary.
\end{thm}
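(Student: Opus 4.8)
The plan is to reduce the reducible pull-backs $f^*B_i$ to irreducible ones, one component at a time, by repeatedly applying Construction~\ref{constr}.The plan is to argue by induction on the integer $n(f):=\sum_{i=1}^m(r_i-1)$, which measures the failure of $f$ to be quasi-elementary. If $n(f)=0$ then $m=0$, so $f$ is already quasi-elementary by Prop.~\ref{quasiel}, and we take $X'=X$, $g=\Id_X$, $E=\emptyset$. For the inductive step assume $m\geq 1$, and fix the (unique) component $D$ of $f^*B_1$ that is \emph{not} among the $r_1-1$ components of $f^*B_1$ chosen to lie in $E$. Since $f$ is special and $Y$ is $\Q$-factorial (Rem.~\ref{Qfact}), $f(D)=B_1$ is a $\Q$-Cartier prime divisor, so we may apply Construction~\ref{constr} to $f$ and $D$. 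By property $(e)$ of that construction the map $k$ is an isomorphism, so we obtain directly a birational map $\psi\colon X\dasharrow W$ over $Y$, with $W$ normal and $\Q$-factorial and $f_W:=f\circ\psi^{-1}\colon W\to Y$ regular, where $\psi$ is a composition of $D$-positive flips and divisorial contractions.

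The relevant properties then transfer. By property $(c)$, $f_W^{-1}(B_1)=f_W^{-1}(f(D))$ is irreducible, so $f_W^*B_1$ is irreducible; by property $(d)$, $\rho_X-\rho_W=r_1-1$, and since flips preserve the Picard number, $\psi$ contracts exactly $r_1-1$ prime divisors, which by property $(c)$ are precisely the divisorial irreducible components of $f^{-1}(B_1)$ other than $D$, i.e.\ the chosen components forming $E\cap\Supp(f^*B_1)$. By property $(b)$, $f$ and $f_W$ coincide over $Y\smallsetminus B_1$; as a general fiber $F$ of $f$ satisfies $f(F)\notin B_1$, it lies in the open locus where $\psi$ is an isomorphism, as required. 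I still need that $f_W$ is special with reducible pullbacks exactly $B_2,\dots,B_m$ and the same $r_2,\dots,r_m$: every prime divisor of $W$ is the proper transform of a prime divisor of $X$ (as $\psi$ only contracts divisors), its image is either $Y$ or a prime divisor of $Y$ via the agreement over $Y\smallsetminus B_1$, hence $\Q$-Cartier since $Y$ is $\Q$-factorial, so $f_W$ is special by Rem.~\ref{Qfact}; and for $i\geq 2$ no component of $f^*B_i$ is contracted by $\psi$ (each dominates $B_i\not\subset B_1$) while $\psi^{-1}$ extracts no divisor, so $f_W^*B_i$ again has exactly $r_i$ components. Thus $n(f_W)=n(f)-(r_1-1)<n(f)$, and the transforms in $W$ of the chosen components of $f^*B_i$ for $i\geq2$ constitute the required subset $E_W$ for $f_W$. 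The inductive hypothesis applied to $f_W$ and $E_W$ produces $g_W\colon W\dasharrow X'$ and a quasi-elementary $f'\colon X'\to Y$ with $f'\circ g_W=f_W$, $\Exc(g_W)=E_W$, and general fiber in the isomorphism locus; setting $g:=g_W\circ\psi$ yields $f'\circ g=f$.

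The last task is to identify $\Exc(g)$ with $E$, and this is the step I expect to be the main obstacle. The \emph{divisorial} part is immediate from the divisor count: $\psi$ contracts exactly the $r_1-1$ chosen components of $f^*B_1$, $g_W$ contracts exactly the transforms of the chosen components for $i\geq 2$, and no divisor is extracted, so the union of contracted prime divisors is precisely $E$. The delicate point is to exclude lower-dimensional contributions to the closure of the exceptional locus arising from the flips hidden inside $\psi$ and $g_W$: a priori a $D$-positive flipping curve $C\subset\Supp(f^*B_1)$ could be contained in the non-contracted component $D$ rather than in $E$, enlarging $\Exc(g)$. I would resolve this by analyzing the flip loci relative to $f_W$ — each such curve is $f_W$-contracted and lies in $f^{-1}(B_1)$ — and showing that, once the divisorial contractions have been performed, the remaining small modifications are concentrated along the contracted components, so that the closure of the exceptional locus of $g$ is exactly the pure one-codimensional set $E$. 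This careful bookkeeping of the flip loci, rather than the clean divisor count supplied by property $(d)$, is the genuinely technical part of the argument.
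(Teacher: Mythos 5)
Your argument is essentially the paper's proof: the paper inducts on $\rho_X-\rho_Y$ (equivalent to your induction on $\sum_i(r_i-1)$) by applying Construction~\ref{constr} to the unchosen component $D$ of $f^*B_1$, using properties $(b)$, $(c)$, $(e)$ exactly as you do — $k$ is an isomorphism since $f(D)=B_1$ is $\Q$-Cartier, $\psi$ contracts precisely the chosen components of $f^*B_1$, the general fiber is untouched — and then applies the induction hypothesis to the still-special $f_W$. The ``main obstacle'' you flag in your last paragraph is not one: by the footnote, $\Exc(g)$ is the closure of the exceptional locus of $g$ \emph{in its domain}, and flipping curves lie in the indeterminacy locus, hence outside the domain, so the small modifications contribute nothing to $\Exc(g)$ and the divisorial count from property $(c)$ already gives $\Exc(g)=E$.
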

\begin{proof}
We proceed by induction on $\rho_X-\rho_Y$. 
 If $f$ is elementary, then it is quasi-elementary, so $E=\emptyset$ and the statement holds with $X'=X$ and $f'=f$.

Let us consider the general case. If $f$ is quasi-elementary, then again  the statement holds with $f'=f$. 

Suppose that $f$ is not quasi-elementary, so that $m\geq 1$ 
by Prop.~\ref{quasiel}, and consider the divisor 
$B_1\subset Y$. 
Let $D$ be the irreducible component of $f^*B_1$ not contained in $E$; 
we have $f(D)=B_1$ because
 $f$ is special. We apply Construction \ref{constr} to $f$ and $D$, and get a diagram:
$$\xymatrix{X\ar[d]_f\ar@{-->}[r]^{\psi}&{W}\ar[dl]^{f_W}
\\
Y&
}$$
where $W$ is $\Q$-factorial, $\psi$ is a sequence of $D$-positive flips and divisorial contractions, relative to $f$, and the general fiber of $f$ is contained in the open subset where $\psi$ is an isomorphism (by $(b)$). 
Moreover
$f_W^*B_1$ is irreducible (by $(e)$), and
the exceptional divisors of $\psi$ are all the components of $f^*B_1$ except $D$ (by $(c)$). 
In particular,  $r_1-1\geq 1$ elementary divisorial contractions occur in $\psi$,
 so $\rho_W<\rho_X$.
Clearly  $f_W$ is still special, and we conclude by applying the induction assumption to $f_W$.
\end{proof}
In particular, given a special contraction $f\colon X\to Y$ with general fiber $F$, one can bound $\rho_X$ in terms of $\rho_Y$, $\rho_F$, and the number of irreducible components of $f^*B_i$, $i=1,\dotsc,m$.
\begin{corollary}\label{boundrho}
Let $X$ be a Mori dream space, $f\colon X\to Y$ a special contraction,  and $F\subset X$ a general fiber of $f$. Notation as in \ref{notation}. Then $$\rho_X=\rho_Y+\dim\N(F,X)+
\sum_{i=1}^m(r_i-1)\leq\rho_Y+\rho_F+\sum_{i=1}^m(r_i-1).$$
\end{corollary}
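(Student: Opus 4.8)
The plan is to read off the equality directly from the factorization provided by Theorem~\ref{factorization2}, computing the three summands $\rho_Y$, $\dim\N(F,X)$, and $\sum_{i=1}^m(r_i-1)$ as separate contributions, and then to deduce the inequality from the description of $\N(F,X)$ as the image of $\N(F)$.

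First I would apply Theorem~\ref{factorization2} to obtain a birational map $g\colon X\dashrightarrow X'$ and a quasi-elementary contraction $f'\colon X'\to Y$ with $f=f'\circ g$, such that $\Exc(g)=E$ and the general fiber $F$ of $f$ is contained in the open subset where $g$ is an isomorphism; set $F':=g(F)$, a general fiber of $f'$. Two of the three contributions are then immediate. On the one hand, $g$ is a composition of flips, which do not change the Picard number, and of elementary divisorial contractions, exactly one for each of the $\sum_{i=1}^m(r_i-1)$ prime components of $E=\Exc(g)$; hence $\rho_X=\rho_{X'}+\sum_{i=1}^m(r_i-1)$. On the other hand, since $f'$ is quasi-elementary we have $\N(F',X')=\ker f'_*$, and $f'_*\colon\N(X')\to\N(Y)$ is surjective, being dual to the injection $(f')^*\colon\Nu(Y)\hookrightarrow\Nu(X')$; therefore $\rho_{X'}=\rho_Y+\dim\N(F',X')$.

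The main step, which I expect to be the principal obstacle, is to show that $\dim\N(F,X)=\dim\N(F',X')$. Here I would factor $g$ into its elementary steps and track the subspace $\N(F,\,\cdot\,)$ along each of them, using that the general fiber avoids $\Exc(g)$ throughout. Across a flip the isomorphism $\N(\,\cdot\,)\cong\N(\,\cdot\,)$ induced by proper transform (the flip being an isomorphism in codimension one) identifies the classes of curves lying in $F$, which meets neither the flipping nor the flipped locus, so $\dim\N(F,\,\cdot\,)$ is unchanged. Across an elementary divisorial contraction $\pi$ with exceptional prime divisor $E_0$, the pushforward $\pi_*$ is surjective with one-dimensional kernel spanned by the class $[\gamma]$ of a curve $\gamma\subset E_0$ contracted by $\pi$; since the fiber avoids $E_0$, Remark~\ref{caffe} gives $\N(F,\,\cdot\,)\subseteq E_0^{\perp}$, whereas $E_0\cdot\gamma<0$ shows $[\gamma]\notin E_0^{\perp}$, so $[\gamma]\notin\N(F,\,\cdot\,)$ and $\pi_*$ restricts to an injection on $\N(F,\,\cdot\,)$. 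As $\pi_*$ maps $\N(F,\,\cdot\,)$ onto the corresponding subspace downstairs, the dimension is again preserved. Composing all the elementary steps yields $\dim\N(F,X)=\dim\N(F',X')$.

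Combining the three computations gives $\rho_X=\rho_Y+\dim\N(F,X)+\sum_{i=1}^m(r_i-1)$, which is the asserted equality. Finally, the inequality $\dim\N(F,X)\leq\rho_F$ holds because $\N(F,X)$ is by definition spanned by the classes of curves contained in $F$, hence is the image of the natural map $\N(F)\to\N(X)$, whose source has dimension $\rho_F$.
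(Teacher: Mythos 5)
Your proof is correct, and its overall skeleton coincides with the paper's: apply Theorem~\ref{factorization2}, get $\rho_X-\rho_{X'}=\sum_{i=1}^m(r_i-1)$ from counting the prime exceptional divisors of $g$, use quasi-elementarity of $f'$ (so $\N(F',X')=\ker f'_*$ and surjectivity of $f'_*$, dual to the injection $(f')^*\colon\Nu(Y)\hookrightarrow\Nu(X')$) to get $\rho_{X'}=\rho_Y+\dim\N(F',X')$, and then transport $\dim\N(F,X)$ across $g$. The one genuine divergence is in that last step: the paper proves the invariance once and for all (Lemma~\ref{stephane}) for an arbitrary birational map $\ph\colon X\dasharrow X'$ between normal $\Q$-factorial varieties, by observing that $\N(T,X)$ is determined by the image of the restriction map $\Nu(X)\to\Nu(T)$, and that these images agree for $X$ and $X'$ when $T$ sits in the isomorphism locus. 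You instead factor $g$ into its elementary steps and track $\N(F,\cdot)$ through each flip and each divisorial contraction, with the nice explicit kernel control $[\gamma]\notin E_0^{\perp}$ via Remark~\ref{caffe} and $E_0\cdot\gamma<0$. Your route is more hands-on and self-contained, but note that it needs slightly more than what Theorem~\ref{factorization2} literally states: you require the transform of $F$ to avoid the exceptional locus of \emph{each individual step}, including the flipping and flipped loci, which are small and hence not contained in $\Exc(g)=E$; membership of $F$ in the isomorphism locus of the composite $g$ does not formally give this. The point is harmless in context, since in the proof of Theorem~\ref{factorization2} every flip and divisorial contraction is relative to $f$ and occurs over the divisors $B_i\subsetneq Y$, so a general fiber avoids all intermediate loci --- but you should either say this explicitly or, as the paper does, apply the restriction-map argument directly to the composite $g$, which sidesteps the issue entirely. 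The concluding inequality $\dim\N(F,X)\leq\rho_F$ via the image of $\N(F)\to\N(X)$ is exactly as intended.
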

For the proof of Cor.~\ref{boundrho} we need the following simple property.
\begin{lemma}\label{stephane}
Let $\ph\colon X\dasharrow X'$ be a birational map between normal and $\Q$-factorial projective varieties. Let 
 $T\subset X$ be a closed subset contained in the open subset where $\ph$ is an isomorphism, and set $T':=\ph(T)\subset X'$. Then $\dim\N(T,X)=\dim\N(T',X')$.
\end{lemma}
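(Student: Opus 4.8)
The plan is to prove that $\dim\N(T,X)=\dim\N(T',X')$ by exhibiting a natural linear isomorphism between $\N(T,X)$ and $\N(T',X')$ induced by the birational map $\ph$. The key observation is that since $T$ lies in the open locus $U\subseteq X$ where $\ph$ is an isomorphism, the restriction $\ph|_U\colon U\to U':=\ph(U)$ is a genuine biregular morphism, and it carries curves in $T$ bijectively to curves in $T'$, preserving algebraic families. So the content is entirely about checking that the push-forward of $1$-cycles $\ph_*\colon\N(X)\to\N(X')$ (which exists because $\ph$ is an isomorphism in codimension one, both varieties being $\Q$-factorial) restricts to an isomorphism between the two subspaces.

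First I would set $U\subseteq X$ to be the maximal open subset where $\ph$ is an isomorphism, noting $T\subset U$ by hypothesis, and let $U'=\ph(U)$, so that $T'=\ph(T)\subset U'$. Since $\ph$ is a birational map between normal $\Q$-factorial projective varieties that is an isomorphism in codimension one (its indeterminacy locus and the exceptional locus both have codimension $\geq 2$), the push-forward of numerical classes of $1$-cycles $\ph_*\colon\N(X)\to\N(X')$ is a well-defined linear isomorphism, with inverse $(\ph^{-1})_*$. I would then argue that $\ph_*$ maps $\N(T,X)$ into $\N(T',X')$: for any irreducible curve $C\subset T$, since $C\subset U$ where $\ph$ is biregular, the image $\ph(C)$ is an irreducible curve contained in $T'$, and $\ph_*[C]=[\ph(C)]\in\N(T',X')$. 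As the classes $[C]$ of curves $C\subset T$ span $\N(T,X)$ by definition, this gives $\ph_*(\N(T,X))\subseteq\N(T',X')$. By the symmetric argument applied to $\ph^{-1}$ (which is an isomorphism on $U'$ and sends $T'$ to $T$), we get $(\ph^{-1})_*(\N(T',X'))\subseteq\N(T,X)$.

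Combining the two inclusions with the fact that $\ph_*$ and $(\ph^{-1})_*$ are mutually inverse isomorphisms of $\N(X)$ and $\N(X')$, I conclude that $\ph_*$ restricts to an isomorphism $\N(T,X)\xrightarrow{\sim}\N(T',X')$, whence $\dim\N(T,X)=\dim\N(T',X')$.

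The only delicate point, and the step I expect to require the most care, is the well-definedness of $\ph_*$ on numerical classes and the compatibility $\ph_*[C]=[\ph(C)]$ for curves meeting the indeterminacy locus trivially. This rests on the standard fact that for a birational isomorphism in codimension one between $\Q$-factorial projective varieties, push-forward and pull-back of $1$-cycles are inverse linear isomorphisms of the numerical spaces; once this is granted, the argument that curves inside $T\subset U$ have images that are honest curves in $T'$ with the expected class is immediate from the biregularity of $\ph|_U$. Everything else is a formal spanning argument.
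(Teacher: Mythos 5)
There is a genuine gap: you have silently added the hypothesis that $\ph$ is an isomorphism in codimension one, which is not part of the statement, and which fails precisely in the application the lemma is designed for. In the proof of Cor.~\ref{boundrho}, the lemma is applied to the map $g$ of Th.~\ref{factorization2}, whose exceptional locus $\Exc(g)=E$ is a (generally nonempty) union of prime divisors, so $g$ is a divisor-contracting birational map, not a SQM. For such a map your two push-forwards cannot be ``mutually inverse isomorphisms'': in the contracting direction, a push-forward $\ph_*\colon\N(X)\to\N(X')$, when it is defined at all, has nontrivial kernel containing the classes of the curves contracted by $\ph$, so it is not an isomorphism of $\N(X)$ onto $\N(X')$ (indeed $\dim\N(X)>\dim\N(X')$); and in the extracting direction $(\ph^{-1})_*$ is not even well defined on numerical classes: already when $\ph^{-1}$ is the blow-down of a divisor to a point, a line $\ell$ in the exceptional divisor has $[\ell]\neq 0$ in $\N(X')$, and no linear map $\N(X')\to\N(X)$ can send the classes of all curves avoiding the exceptional divisor to the classes of their images while being compatible with $[\ell]$. (Injectivity of $\ph_*$ on $\N(T,X)$ could be salvaged by intersecting with the $\Q$-Cartier exceptional divisors, which $T$ does not meet, but your proposal does not do this, and the inverse direction remains broken.) As written, your argument proves the lemma only when $\ph$ is a SQM, which is not enough for the paper's use of it.

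The paper's proof sidesteps push-forward of curves entirely by dualizing: $\dim\N(T,X)$ equals the rank of the intersection pairing between real $1$-cycles supported on $T$ and $\Nu(X)$, i.e.\ the dimension of the image of the restriction map $\Nu(X)\to\Nu(T)$, and similarly for $T'$. One then checks that these two images agree under the natural identification $\Nu(T)\cong\Nu(T')$ coming from the isomorphism $T\cong T'$. This is where both hypotheses enter: by $\Q$-factoriality every divisor class on either side has a $\Q$-Cartier proper transform on the other, and on the graph $W$ with projections $p\colon W\to X$, $q\colon W\to X'$, the difference $p^*D-q^*D'$ (with $D'$ the transform of $D$) is supported on $p$- and $q$-exceptional divisors, none of which meets $p^{-1}(T)$ because $T$ lies in the locus where $\ph$ is an isomorphism; hence $D$ and $D'$ restrict to the same class on $T\cong T'$. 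Reworking your argument in terms of divisors along these lines, rather than curve classes, would repair it in the stated generality.
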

\begin{proof}
We note that $\N(T,X)$ is the quotient of the vector space of real $1$-cycles in $T$ by the subspace of 1-cycles $\gamma$ such that $\gamma\cdot D=0$ for every divisor $D$ in $X$, so it is determined by the image of the restriction map $\Nu(X)\to\Nu(T)$, and similarly for $\N(T',X')$. Since $X$ and $X'$ are $\Q$-factorial, and $T$ is contained in the open subset where $\ph$ is an isomorphism, it is easy to see that the images of the maps $\Nu(X)\to\Nu(T)$ and $\Nu(X')\to\Nu(T')$ are the same, under the natural isomorphism $\Nu(T)\cong\Nu(T')$.
\end{proof}
\begin{proof}[Proof of Cor.~\ref{boundrho}]
Let us consider the factorization of $f$ given by Th.~\ref{factorization2}. 
The difference $\rho_X-\rho_{X'}$ is the  number of prime exceptional divisors of $g$, namely $\sum_{i=1}^m(r_i-1)$. Moreover
 $F$ is contained in the open subset where $g$ is an isomorphism, $g(F)\subset X'$ is a general fiber of $f'$, and $\dim\N(F,X)=\dim\N(g(F),X')$ by Lemma \ref{stephane}. Finally, since $f'$ is quasi-elementary, we have $\rho_{X'}=\rho_Y+\dim\N(g(F),X')$. This yields the statement.
\end{proof}
\begin{corollary}\label{face}
Let $X$ be a Mori dream space and $f\colon X\to Y$ a special contraction; notation as in \ref{notation}.  Then every prime divisor in $f^*B_i$ is a fixed divisor, for $i=1,\dotsc,m$.

Moreover, let $E$ be the union of (arbitrarily chosen) $r_i-1$ components of  $f^*B_i$, for $i=1,\dotsc,m$.
Then the classes of the components of $E$ in $\Nu(X)$ generate a simplicial face $\sigma$ of $\Eff(X)$, and $\sigma\cap\Mov(X)=\{0\}$. 
\end{corollary}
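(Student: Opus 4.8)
The plan is to deduce the statement from the factorization of Theorem~\ref{factorization2}. Apply it to $f$ and to the chosen $E$: we obtain a birational contraction $g\colon X\dasharrow X'$ between $\Q$-factorial projective varieties, a composition of flips and elementary divisorial contractions, with $\Exc(g)=E$ and with $f'\colon X'\to Y$ quasi-elementary. Denote by $G_1,\dots,G_N$ (where $N=\sum_{i=1}^m(r_i-1)$) the prime components of $E$; these are precisely the divisors contracted by $g$. I would argue throughout by means of the proper-transform map on divisor classes $g_*\colon\Nu(X)\to\Nu(X')$, which is linear, sends $\Eff(X)$ into $\Eff(X')$, and satisfies $g_*[G_k]=0$. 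The backbone of the proof is the following linear-algebra fact: $g_*$ is surjective (it admits the section given by pullback of divisors), its kernel has dimension $N=\rho_X-\rho_{X'}$, and the classes $[G_1],\dots,[G_N]$ form a basis of $\ker g_*$; in particular they are linearly independent. This is obtained by induction on the steps of the MMP in Construction~\ref{constr}: flips are isomorphisms in codimension one, and each elementary divisorial contraction drops $\rho$ by one, contributing exactly one new independent direction, namely the class of the divisor it contracts.

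Granting this, the assertion that $\sigma:=\langle[G_1],\dots,[G_N]\rangle$ is a face of $\Eff(X)$ becomes formal. First I would check $\Eff(X)\cap\ker g_*=\sigma$: given $\alpha\in\Eff(X)$ with $g_*\alpha=0$, write $\alpha=[\Gamma]$ with $\Gamma$ an effective $\R$-divisor (possible since $\Eff(X)$ is generated by effective classes); then $g_*\Gamma$ is an effective $\R$-divisor with numerically trivial class, hence $g_*\Gamma=0$ on the projective variety $X'$, so $\Supp\Gamma\subseteq\Exc(g)=E$ and $\Gamma=\sum_k c_kG_k$ with $c_k\ge0$, i.e.\ $\alpha\in\sigma$. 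Now if $u,v\in\Eff(X)$ with $u+v\in\sigma\subseteq\ker g_*$, then $g_*u+g_*v=0$ with $g_*u,g_*v\in\Eff(X')$; as $\Eff(X')$ is pointed ($X'$ being projective), $g_*u=g_*v=0$, so $u,v\in\Eff(X)\cap\ker g_*=\sigma$. Thus $\sigma$ is a face, and it is simplicial because the $[G_k]$ are linearly independent.

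The same computation yields the first assertion. For $\alpha=m[G_k]$ it shows that the only effective divisor numerically (hence linearly, on a Mori dream space) equivalent to $mG_k$ is $mG_k$ itself; therefore the stable base locus of $|G_k|$ equals $G_k$, and $G_k$ is a fixed prime divisor. Since $r_i\ge2$, any prescribed component of any $f^*B_i$ can be taken to be one of the $r_i-1$ chosen components, so every prime divisor in every $f^*B_i$ is fixed. For $\sigma\cap\Mov(X)=\{0\}$, note that a nonzero intersection of these rational polyhedral cones would contain a nonzero rational class $w$. Every rational class of $\Mov(X)$ is movable: it lies in $\phi^*\Nef(X_\phi)$ for some SQM $\phi\colon X\dasharrow X_\phi$, hence is the pullback, under an isomorphism in codimension one, of a semiample class (nef equals semiample on a Mori dream space, \cite{hukeel}), so its stable base locus has codimension $\ge2$. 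On the other hand $w\in\sigma$ means $w=\sum_k c_k[G_k]$ with some $c_{k_0}>0$, and the computation above forces the unique effective representative of a suitable multiple $mw$ to be $\sum_k mc_kG_k$; hence $G_{k_0}$, a prime divisor, lies in the stable base locus of $|w|$, contradicting codimension $\ge2$. Therefore $\sigma\cap\Mov(X)=\{0\}$.

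The main obstacle is the linear-algebra backbone of the first paragraph — that the contracted classes $[G_1],\dots,[G_N]$ are linearly independent and span $\ker g_*$ — which encodes the transversality of the successive divisorial contractions in Construction~\ref{constr}; once it is in place, the face, simpliciality and fixed-divisor statements are purely formal consequences of the principle \emph{``an effective class with vanishing $g_*$ is supported on $E$''}. The only other nontrivial ingredient is the passage, for $\sigma\cap\Mov(X)$, between the \emph{closed} cone $\Mov(X)$ and honest stable base loci, supplied by the nef $=$ semiample property of Mori dream spaces.
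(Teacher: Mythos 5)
Your proposal is correct and structurally identical to the paper's proof: both begin by applying Th.~\ref{factorization2} to obtain the contracting birational map $g\colon X\dasharrow X'$ with $X'$ $\Q$-factorial whose prime exceptional divisors are exactly the components of $E$. The only difference is that the paper then concludes in one line by citing \cite[Lemma 2.7]{okawa_MCD}, whereas you reprove the content of that lemma directly (linear independence of the exceptional classes and $\ker g_*=\langle[G_1],\dotsc,[G_N]\rangle_{\R}$, the identity $\Eff(X)\cap\ker g_*=\sigma$, and the face, fixed-divisor, and $\sigma\cap\Mov(X)=\{0\}$ conclusions) --- a correct, self-contained expansion of the same argument.
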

\begin{proof}
Th.~\ref{factorization2} implies the existence of a contracting birational map $g\colon X\dasharrow X'$, with $X'$ $\Q$-factorial, whose prime exceptional divisors are precisely the components of $E$. This gives the statement (see for instance \cite[Lemma 2.7]{okawa_MCD}). 
\end{proof}
We will also need the following technical property.
\begin{lemma}\label{unosolo}
Let $X$ be a Mori dream space and $f\colon X\dasharrow Y$ a special rational contraction; notation as in \ref{notation}. Let $E_0$ be an irreducible component of $f^*B_i$ for some $i\in\{1,\dotsc,m\}$. Then there is a factorization of $f$:
$$\xymatrix{X\ar@{-->}[r]^{\ph}\ar@{-->}[d]_f&{\wi{X}}\ar[d]^{\sigma}\\
Y&Z\ar[l]
}$$
where $\ph$ is a SQM, $\sigma$ is an elementary divisorial contraction, 
 $\Exc(\sigma)$ is the transform of $E_0$, and $\dim\sigma(\Exc\sigma)\geq\dim Y-1$.
\end{lemma}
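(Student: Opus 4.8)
The plan is to construct $\ph$ and $\sigma$ by running a relative MMP for $E_0$, and to use the specialness of $f$ to guarantee that this MMP really contracts $E_0$. First I would reduce to the case where $f$ is regular: by definition of a special rational contraction there is a SQM $X\dasharrow X_1$ after which $f$ becomes regular and special (Rem.~\ref{Qfactrat}); replacing $X$ by $X_1$ and $E_0$ by its transform, and absorbing this SQM into the final $\ph$, I may assume that $f\colon X\to Y$ is regular and special. Then $E_0$ dominates the prime divisor $B_i=f(E_0)$, which is $\Q$-Cartier since $f$ is special, and $\dim B_i=\dim Y-1$; moreover $r_i\geq 2$, so $f^*B_i$ has at least one irreducible component besides $E_0$ (Notation~\ref{notation}).

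Next I would run a MMP for $E_0$ relative to $f$ (see \S\ref{terminology}), obtaining $\psi\colon X\dasharrow X'$, a composition of $E_0$-negative flips and elementary divisorial contractions, with $f':=f\circ\psi^{-1}\colon X'\to Y$ regular. The point is that every contracted ray is $E_0$-negative, so the curves it contains lie on the transform of $E_0$; this forces two things. First, no step can be of fiber type: the contracted curves of a fiber type step cover the whole variety, and being contained in the transform of $E_0$ they would force that transform to be everything. Second, a divisorial step must have the transform of $E_0$ as its exceptional divisor; hence only one divisorial contraction can occur, it contracts the transform of $E_0$, and --- since afterwards that transform is zero, hence trivially $f'$-nef --- it is the last step of $\psi$. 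Writing $\ph\colon X\dasharrow\w X$ for the flips that precede it (a SQM) and $\sigma\colon\w X\to Z$ for this contraction, I obtain exactly the diagram in the statement: $\sigma$ is elementary with $\Exc(\sigma)$ equal to the transform of $E_0$, and since everything is relative to $f$ the induced $Z\to Y$ satisfies $f=(Z\to Y)\circ\sigma\circ\ph$. The dimension bound then follows at once, because $\sigma(\Exc(\sigma))$ is mapped onto $f(E_0)=B_i$ by $Z\to Y$, so $\dim\sigma(\Exc(\sigma))\geq\dim B_i=\dim Y-1$.

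The hard part is to ensure that a divisorial contraction of $E_0$ actually takes place, i.e.\ to exclude the possibility that $\psi$ consists of flips only and the transform $\w E_0$ of $E_0$ survives on $X'$ as an $f'$-nef prime divisor. This is exactly where specialness should enter. In that situation $\w E_0$ is disjoint from a general fiber $F$ of $f'$ (as $B_i\subsetneq Y$), so $\w E_0\cdot C=0$ for every curve $C\subset F$; hence the contraction $\sigma\colon X'\to Z$ over $Y$ determined by the $f'$-nef divisor $\w E_0$ contracts every general fiber of $f'$, and the induced morphism $h\colon Z\to Y$ is therefore birational. Since $h$ is birational and $\dim B_i=\dim Y-1$, the strict transform of $B_i$ is the only prime divisor of $Z$ dominating $B_i$, and it pulls back along $\sigma$ to $\w E_0$; consequently every other component of $f^*B_i$ is carried by $\sigma$ into the exceptional locus of $h$, so its image under $f'=h\circ\sigma$ lies in $h(\Exc(h))$, a subset of codimension $\geq 2$ in $Y$. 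This contradicts the fact that $f'$ is special, because each component of $f^*B_i$ must dominate the codimension-one subvariety $B_i$. (In the degenerate case $\sigma=f'$ one concludes directly: $\w E_0$ would be $f'$-trivial, hence a pull-back from $Y$, against $r_i\geq 2$.) This rules out the bad alternative, so the relative MMP does contract $E_0$ and the construction goes through.
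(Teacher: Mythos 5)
Your proposal is correct, and its skeleton matches the paper's proof: reduce to $f$ regular by a SQM, run a MMP for $E_0$ relative to $f$, note that every $E_0$-negative extremal step has its contracted curves inside the transform of $E_0$ (so no fiber-type step can occur and any divisorial step must contract exactly that transform), and get the dimension bound from the fact that the transform of $E_0$ still dominates the divisor $B_i$. The divergence is at the one genuinely delicate point: why the MMP actually reaches a divisorial contraction instead of terminating with an $f'$-nef transform $\w{E}_0$. The paper disposes of this in one line, quoting Cor.~\ref{face} (which rests on the factorization Th.~\ref{factorization2}) to say that $E_0$ is a fixed divisor, and asserting that a component of the reducible pull-back $f^*B_i$ is ``easily seen'' not to be relatively nef; you instead prove this directly by contradiction: assuming $\w{E}_0$ is $f'$-nef, you contract the face $\NE(f')\cap\w{E}_0^{\perp}$ over $Y$, observe that the induced $h\colon Z\to Y$ is birational because $\w{E}_0$ is trivial on general fibers, and contradict speciality since the remaining components of $f'^*B_i$ (which exist, as $r_i\geq 2$) would map into $h(\Exc(h))$, of codimension $\geq 2$ in $Y$. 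This is in effect a re-derivation, with the sign reversed, of the paper's Construction~\ref{constr} (there $-D$ is made relatively nef and the induced $k$ is birational with $\Exc(k)\subseteq D_T$); what your route buys is independence from Th.~\ref{factorization2} and Cor.~\ref{face}, at the cost of length, while the paper's citation is shorter but leaves the ``not $f'$-nef'' verification to the reader. One step you assert without justification is that the strict transform of $B_i$ ``pulls back along $\sigma$ to $\w{E}_0$'': since $Z$ is a priori not $\Q$-factorial, the correct order of reasoning is the one used in Construction~\ref{constr} — $\w{E}_0$ is trivial on $\NE(\sigma)$ by the choice of the face, hence $\w{E}_0=\lambda\sigma^*E_Z$ for some $\Q$-Cartier divisor $E_Z$ on $Z$ and $\lambda\in\Q_{>0}$; then $\sigma^{-1}(\Supp E_Z)=\w{E}_0$, so $\Supp E_Z$ is the unique prime divisor of $Z$ dominating $B_i$, i.e.\ the strict transform, after which your contradiction goes through verbatim. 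This is a minor gap, not a flaw in the approach.
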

\begin{proof}
Let us choose a SQM $\psi\colon X\dasharrow X'$ such that $f':=f\circ\psi^{-1}\colon X'\to Y$ is regular.

We still denote by $E_0$ the transform of $E_0$ in $X'$;
by Cor.~\ref{face}, $E_0$ is a fixed divisor, and it is easy to see that it cannot be $f'$-nef. We run a MMP in $X'$ for $E_0$, relative to $f'$, and get a diagram:
$$\xymatrix{X\ar@{-->}[r]^{\psi}\ar@{-->}[dr]_f&{X'}\ar@{-->}[r]^{\xi}\ar[d]_{f'}&
{\wi{X}}\ar[d]^{\sigma} \\
& Y&Z\ar[l]_h
}$$
where $\xi$ is a sequence of $E_0$-negative flips, and $\sigma$ is an elementary divisorial contraction with exceptional divisor (the transform of) $E_0$.

Now $h\circ\sigma\colon\wi{X}\to Y$ is a special contraction, therefore $h(\sigma(\Exc(\sigma)))$ is a divisor in $Y$, and $\dim \sigma(\Exc(\sigma))\geq \dim Y-1$.
\end{proof}
\subsection{Singularities of the target}\label{secsingularities}
\noindent The goal of this subsection is to prove the following result.
\begin{proposition}\label{sing}
Let $X$ be a smooth projective variety, and $\Delta$ a $\Q$-divisor on $X$ such that $(X,\Delta)$ is klt. 
Let
 $f\colon X\to Y$ be a $(K+\Delta)$-negative special contraction of fiber type. 
 Then $Y$ has locally factorial, canonical singularities, and is nonsingular in codimension 2.
\end{proposition}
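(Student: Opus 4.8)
The plan is to reduce each of the three conclusions to the two key structural facts already available: that a special contraction of fiber type has nice fibers generically (Rem.~\ref{unico}$(\ref{equidimensional})$: the non-equidimensional locus has codimension $\geq 3$ in $Y$), and that $Y$ is $\Q$-factorial (Rem.~\ref{Qfact}). First I would establish that $Y$ is nonsingular in codimension $2$ and canonical; these two statements follow from general results on $(K+\Delta)$-negative (i.e. relatively log Fano) contractions once we know the fiber dimension is controlled. The standard input here is that the image of a $K$-negative (or $(K+\Delta)$-negative) fiber type contraction inherits mild singularities: since $(X,\Delta)$ is klt and $f$ is $(K+\Delta)$-negative of fiber type, one can write $K_X+\Delta \sim_{\Q,f} -A$ for an $f$-ample $\Q$-divisor $A$, and apply the theory of adjunction/canonical bundle formula for Mori fiber spaces to conclude that $Y$ has canonical (in fact klt) singularities. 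The codimension-$2$ smoothness should come from combining this with the equidimensionality in codimension $2$: over the locus $U\subseteq Y$ where $f$ is equidimensional, $f$ is flat (by ``miracle flatness'', since $X$ is smooth hence Cohen--Macaulay and $U$ is where fibers have the expected dimension), and a flat fiber-type morphism from a smooth variety with generically reduced fibers forces $Y$ to be smooth along the generic point of any divisor it maps onto, so $Y$ is regular on the complement of a codimension-$3$ set.

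The subtler conclusion is \emph{local factoriality} (Cartier-ness of all Weil divisors), which is strictly stronger than $\Q$-factoriality. Here I would argue on $U$, the equidimensional locus, whose complement has codimension $\geq 3$ in $Y$. Over $U$, as noted, $f$ is flat with fibers of pure expected dimension, and $X$ is smooth. I would then invoke a Grothendieck–Lefschetz type / purity argument: for a flat proper fiber type morphism from a \emph{locally factorial} (indeed smooth) source, with connected fibers, the pullback $f^*\colon \Pic(Y)\to\Pic(X)$ together with $f^*\colon \operatorname{Cl}(Y)\to\operatorname{Cl}(X)$ can be compared using the generic fiber; since $X$ is locally factorial, the class group of $X$ equals its Picard group, and one transports this to $Y$ using that a general fiber $F$ satisfies $\operatorname{Pic}(F)=\operatorname{Cl}(F)$ or at least that Weil divisors on $Y$ pull back to Cartier divisors on $X$ and descend. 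Concretely, given a Weil divisor $B$ on $Y$, its pullback $f^*B$ is a Weil divisor on the smooth variety $X$, hence Cartier; one must then show the line bundle $\ol_X(f^*B)$ is the pullback of a line bundle on $Y$, and conclude $B$ itself is Cartier on $U$ — extending across the codimension-$\geq 3$ complement by the fact that a Weil divisorial sheaf that is reflexive and locally free in codimension $2$ on a variety that is already $S_2$ and regular in codimension $2$ need not automatically be a line bundle, so this extension is where the real care is needed.

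I expect the \textbf{main obstacle} to be precisely this descent-and-extension step for local factoriality. The difficulty is that $\Q$-factoriality plus regularity in codimension $2$ does \emph{not} in general give local factoriality (there are $\Q$-factorial, canonical, even terminal singularities that are not factorial), so the argument must genuinely use the fiber-type structure and the smoothness of $X$ in an essential way, not merely the singularities of $Y$ in isolation. The cleanest route is likely: (i) reduce to $U$ where $f$ is flat and equidimensional; (ii) on $U$, use that $f$ is a flat fiber space from a smooth variety to deduce $f^*\colon \operatorname{Cl}(U)\to\operatorname{Cl}(f^{-1}U)=\operatorname{Pic}(f^{-1}U)$ is well-defined and injective with image consisting of classes that are Cartier after pullback, and then argue that a class whose pullback is Cartier on the smooth total space and that is trivial on a general fiber must already be Cartier downstairs; (iii) propagate Cartier-ness from $U$ to all of $Y$ by observing that the complement has codimension $\geq 3$ and a reflexive rank-one sheaf that is invertible in codimension $2$ on a variety which is Cohen–Macaulay (so $S_3$ in codimension $3$) and canonical is invertible — this last purity statement is exactly the sort of input one should cite rather than reprove. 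Getting the logical dependence right — knowing that canonical and codimension-$2$-regular is enough to run the extension, given the fiber-type flatness on $U$ — is the crux, and I would organize the write-up so that the codimension filtration ($U$ of codimension $\leq 2$ boundary handled by flatness, codimension $\geq 3$ handled by purity/reflexivity) carries all three assertions simultaneously.
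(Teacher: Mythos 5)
You have correctly identified local factoriality as the crux, but the mechanism you propose for it does not work, and the ingredient that makes it work is absent from your sketch: the $(K+\Delta)$-negativity enters precisely through the contraction theorem \cite[Th.~3.7(4)]{kollarmori}, which says that a Cartier divisor on $X$ numerically trivial on the curves contracted by $f$ descends to a Cartier divisor on $Y$. The paper's argument (Lemma~\ref{lf}) is short: given a prime divisor $B\subset Y$, set $U:=f^{-1}(Y_{reg})$; speciality forces $\codim(X\smallsetminus U)\geq 2$, so the Cartier divisor $f_{|U}^*(B\cap Y_{reg})$ extends to a Cartier divisor $D$ on the smooth $X$; $\Q$-factoriality of $Y$ (Rem.~\ref{Qfact}) gives $mD=f^*(mB)$, hence $D\cdot C=0$ for every contracted curve, and \cite[Th.~3.7(4)]{kollarmori} yields $D=f^*B'$ with $B'$ Cartier, whence $B=B'$. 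Your alternative route fails at two points. Step (iii)'s purity claim --- a reflexive rank-one sheaf, invertible in codimension $2$ on a Cohen--Macaulay canonical variety, is invertible --- is false: the ordinary double point $xy=zw$ in dimension $3$ is Gorenstein, terminal, Cohen--Macaulay and regular in codimension $2$, yet carries a non-Cartier Weil divisor. And step (ii)'s descent (``pullback Cartier on the smooth total space and trivial on a general fiber implies Cartier downstairs'') is false without $K$-negativity: for an isotrivial quotient fibration $(S\times F)/G\to S/G$ with $G$ acting freely on $S\times F$ but with fixed points on $S$, the total space is smooth, yet non-Cartier divisors on the base pull back to Cartier divisors. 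Any correct proof must use the negativity hypothesis at exactly this juncture, and your sketch never does.

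The other two conclusions have analogous gaps, and your order of deductions is the reverse of the viable one. The canonical bundle formula, i.e.\ \cite[Cor.~4.5]{fujino}, gives only a klt boundary on $Y$ (equivalently, rational singularities), which is strictly weaker than canonical; the paper obtains canonical \emph{as a consequence of} local factoriality, via ``locally factorial plus rational singularities implies canonical'' \cite[Cor.~5.24]{kollarmori} --- so canonical cannot be established first, as you propose. For smoothness in codimension $2$, your appeal to miracle flatness is circular (it requires the base to be regular, which is what is being proved), and flatness from a smooth total space does not force a smooth base in any case, as the quotient examples above show with quotient singularities in codimension $2$ on the base. The paper instead (Lemma~\ref{cod3}) slices by $\dim Y-2$ general very ample divisors to reduce to an equidimensional $(K_Z+\Delta_{|Z})$-negative contraction $Z\to S$ onto a normal surface --- this is where Rem.~\ref{unico}$(\ref{equidimensional})$ is used --- notes that $S$ is klt and hence has finite quotient singularities, and then invokes the Fujita--Andreatta--Ballico--Wi\'sniewski argument (Lemma~\ref{quotient}), whose engine is $\chi(F,\ol_F)=1$ for the log Fano general fiber. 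Once again the $(K+\Delta)$-negativity does irreducible work that your flatness-based mechanism omits entirely.
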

Prop.~\ref{sing} will follow from some technical lemmas.
\begin{lemma}\label{lf}
Let $X$ be a  projective variety  with locally factorial, canonical singularities, and  $\Delta$  a boundary such that $(X,\Delta)$ is klt. 
Let
 $f\colon X\to Y$ be a $(K+\Delta)$-negative special contraction of fiber type. 
 Then $Y$ has locally factorial, canonical singularities.
\end{lemma}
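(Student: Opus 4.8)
The plan is to treat the two assertions separately — that $Y$ has canonical singularities and that $Y$ is locally factorial — exploiting throughout that both are local conditions on $Y$ and that $f$ is already well behaved in low codimension. Since $f$ is special, $Y$ is $\Q$-factorial by Rem.~\ref{Qfact}, and by Rem.~\ref{unico}$(\ref{equidimensional})$ there is a closed subset $S\subset Y$ with $\codim S\geq 3$ such that $f$ is equidimensional over $Y\smallsetminus S$. I would first extract the klt property of $Y$, use it to run the factoriality argument, and only then return to the harder upgrade to canonical.

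Because $-(K_X+\Delta)$ is $f$-ample, I would perturb the boundary: choosing a general effective $\Q$-divisor $\Gamma\sim_{\Q}-(K_X+\Delta)+f^*A$ with $A$ ample on $Y$, one arranges that $(X,\Delta+\Gamma)$ is still klt and that $K_X+\Delta+\Gamma\sim_{\Q}f^*A$ is $\Q$-linearly trivial over $Y$. Adjunction for klt-trivial fibrations (the canonical bundle formula of Ambro--Kawamata) then produces an effective discriminant divisor and a nef moduli part making $(Y,\Delta_Y)$ klt, hence $Y$ klt; in particular $Y$ has rational, and therefore Cohen--Macaulay, singularities. \emph{Upgrading klt to canonical is the heart of the matter}, and here one must genuinely use that $X$ itself is canonical. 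I would compare discrepancies on a common model: given a divisor $E$ over $Y$ with centre $Z$, take compatible models $X'\to X$ and $f'\colon X'\to Y'\to Y$ with $E\subset Y'$, choose a divisor $\widehat{E}$ over $X$ dominating $E$, and relate $a(E,Y)$ to $a(\widehat{E},X)$ through the relative canonical divisor. Since $X$ is canonical, $a(\widehat{E},X)\geq 0$, and the $f$-antiampleness of $K_X+\Delta$ should fix the sign of the relative and boundary contributions so as to yield $a(E,Y)\geq 0$.

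For local factoriality I would prove by induction on $\codim_Y y$ that $\ol_{Y,y}$ is a unique factorization domain. Every point of codimension at most $2$ lies in $Y\smallsetminus S$, where $f$ is equidimensional; the equidimensional statement — the locally factorial refinement of Lemma~\ref{kollar}, valid now because $X$ is \emph{locally} factorial rather than merely $\Q$-factorial — shows that $Y\smallsetminus S$ is locally factorial. For a point $y$ of codimension at least $3$, the local ring $\ol_{Y,y}$ is Cohen--Macaulay of dimension $\geq 3$, so by Grothendieck's parafactoriality theorem $\Pic$ of its punctured spectrum vanishes; by the inductive hypothesis that punctured spectrum is moreover locally factorial, so every Weil divisor is trivial there, and since normality (depth $\geq 2$) means passing to the punctured spectrum leaves the class group unchanged, we get $\operatorname{Cl}(\ol_{Y,y})=0$. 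Hence $Y$ is locally factorial.

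The step I expect to be the main obstacle is precisely the promotion of klt to canonical. Unlike local factoriality and regularity in codimension $2$, being canonical is \emph{not} detected in low codimension — there are klt, non-canonical isolated singularities — so it cannot be read off from the equidimensional locus and genuinely requires the global discrepancy comparison with $X$. Making the inequality $a(E,Y)\geq a(\widehat{E},X)+(\text{relative and boundary terms})$ precise, in particular pinning down the sign of the relative canonical contribution from the $f$-negativity of $K_X+\Delta$, is the delicate point; it is most safely handled either by a careful local computation over the generic point of $Z$ or by invoking the general theory of singularities of fibre spaces.
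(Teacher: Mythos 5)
Your proposal contains two genuine gaps, one in each half. The first is the step you yourself flag as ``the heart of the matter'': the upgrade from klt to canonical is never actually proved, and the discrepancy comparison you sketch is not a workable argument --- for a fiber space there is no general inequality bounding $a(E,Y)$ below by discrepancies over $X$, and the $f$-antiampleness of $K_X+\Delta$ does not fix the sign of the relative contribution. What you missed is that the canonical step is \emph{free} once factoriality is known: by \cite[Cor.~4.5]{fujino} the target carries a boundary making it klt, hence $Y$ has rational singularities; and once $Y$ is locally factorial, $K_Y$ is Cartier, so rational singularities imply canonical singularities by \cite[Cor.~5.24]{kollarmori}. This is exactly how the paper proceeds (your remark that ``canonical is not detected in low codimension'' is correct, but it \emph{is} detected by rational singularities when $K_Y$ is Cartier), and it reduces the whole lemma to proving that every prime divisor on $Y$ is Cartier.

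The second gap is that your factoriality argument rests on two false ingredients. The ``locally factorial refinement of Lemma~\ref{kollar}'' does not hold: equidimensionality descends $\Q$-factoriality but not local factoriality. For instance, let $\Z/2\Z$ act on $E\times\pr^1\times\pr^1$ ($E$ an elliptic curve) by a translation of order $2$ on $E$ and by involutions with fixed points on each $\pr^1$; the quotient $X$ is smooth, the induced map to $Y=(\pr^1\times\pr^1)/(\Z/2\Z)$ is an equidimensional contraction with elliptic fibers, yet $Y$ has four $A_1$ points, so it is $\Q$-factorial but not locally factorial. (This is consistent with the lemma because no boundary makes this $f$ $(K+\Delta)$-negative --- which shows the $(K+\Delta)$-negativity is essential and cannot be confined to producing klt-ness of $Y$.) The paper uses the negativity precisely here: given a prime divisor $B\subset Y$, speciality gives $mB$ Cartier and $\codim(X\smallsetminus f^{-1}(Y_{reg}))\geq 2$, local factoriality of $X$ produces a Cartier divisor $D$ with $mD=f^*(mB)$, hence $D$ is $f$-trivial, and \cite[Th.~3.7(4)]{kollarmori} descends $D$ to a Cartier divisor $B'=B$ on $Y$ --- a global argument with no induction on codimension. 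Finally, your parafactoriality step misquotes Grothendieck: parafactoriality holds for complete intersection local rings of dimension $\geq 4$ (SGA~2), not for Cohen--Macaulay rings of dimension $\geq 3$; the ordinary double point $xy=zw$ is a $3$-dimensional hypersurface (hence CM, rational) singularity whose punctured spectrum has $\Pic\cong\Z$, so your induction already fails at codimension-$3$ points.
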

\begin{proof}
It follows from \cite[Cor.~4.5]{fujino} that 
 $Y$ has rational singularities, 
so it is enough to show that it is locally factorial \cite[Cor.~5.24]{kollarmori}. 

Let $B$ be a prime divisor in $Y$. Since $Y$ is $\Q$-factorial, there exists $m\in\Z_{>0}$ such that $mB$ is Cartier.

Set $U:=f^{-1}(Y_{reg})$; since $Y$ is normal and $f$ is special, we have $\codim \Sing(Y)\geq 2$ and $\codim(X\smallsetminus U)\geq 2$. Then $B\cap Y_{reg}$ is a Cartier divisor on $Y_{reg}$, and $f_{|U}^*(B\cap Y_{reg})$ is a Cartier divisor on $U$. Since $X$ is locally factorial, there exists a Cartier divisor $D$ in $X$ such that $D_{|U}=f_{|U}^*(B\cap Y_{reg})$. Then $(mD)_{|U}=f_{|U}^*((mB)_{|Y_{reg}})=f^*(mB)_{|U}$, and hence $mD=f^*(mB)$. 

We deduce that $D\cdot C=0$ for every curve $C\subset X$ contracted by $f$. Since $f$ is $(K+\Delta)$-negative, this implies that there exists a Cartier divisor $B'$ on $Y$ such that $D=f^*B'$ \cite[Th.~3.7(4)]{kollarmori}. 
Thus we have $B'_{|Y_{reg}}=B\cap Y_{reg}$, and hence $B=B'$ is Cartier.
\end{proof}
The following two lemmas are basically \cite[Prop.~1.4 and 1.4.1]{ABW}, where they are attributed to Fujita.
\begin{lemma}\label{quotient}
Let $X$ be a smooth projective variety, and $\Delta$ a $\Q$-divisor on $X$ such that $(X,\Delta)$ is klt. Let $f\colon X\to Y$ be an equidimensional, $(K+\Delta)$-negative contraction of fiber type. If $Y$ has at most finite quotient singularities, then $Y$ is smooth. 
\end{lemma}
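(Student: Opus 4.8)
The plan is to reduce the statement to showing that the local fundamental group of the quotient singularity is trivial, by pulling back the local uniformizing cover along $f$ and exploiting that the fibers of $f$ are simply connected. First I would localize: fix $y\in Y$ and, after replacing the local cover by its quotient by the subgroup generated by pseudo-reflections (Chevalley--Shephard--Todd), present $Y$ analytically (or \'etale-locally) around $y$ as $V/G$, with $V$ smooth, $G$ finite acting freely in codimension one and fixing a point $v_0$ over $y$. Since $G$ is now \emph{small}, $V/G$ is smooth if and only if $G=\{1\}$, so it suffices to prove $G=\{1\}$; note that the branch locus of $p\colon V\to Y$ is exactly $\Sing(Y)$, of codimension $\geq 2$.

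Next I would perform the base change. Set $P:=X\times_Y V$, with projections $q\colon P\to X$ and $r\colon P\to V$, and let $\nu\colon W\to P$ be the normalization, $\sigma:=q\circ\nu$ and $g:=r\circ\nu$. Since $r$ is the base change of $f$, its general fiber is the (connected) general fiber $F$ of $f$; as $V$ is irreducible with irreducible general fiber, $P$ and hence $W$ are irreducible, and as $F$ is smooth the general fiber of $g$ is connected. By Stein factorization $g\colon W\to V$ then has connected fibers, because $V$ is normal and the finite part of the factorization is birational, hence an isomorphism. The group $G$ acts on $W$ with $W/G=X$, and $\sigma$ is the quotient map, of degree $|G|$.

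The crucial point is that $\sigma$ is \emph{\'etale}. Over $f^{-1}(Y_{reg})$ the map $p$ is \'etale, so $\sigma$ is \'etale there, and its branch locus is contained in $f^{-1}(\Sing(Y))$. Here equidimensionality enters: it gives $\codim_X f^{-1}(\Sing(Y))=\codim_Y\Sing(Y)\geq 2$, so $\sigma$ is \'etale in codimension one. Since $X$ is smooth and $W$ is normal, purity of the branch locus (Zariski--Nagata) upgrades this to $\sigma$ being \'etale everywhere.

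Finally I would play this against the fibers. As $f$ is a $(K+\Delta)$-negative contraction of a klt pair, every fiber is rationally chain connected (Hacon--McKernan), hence admits no nontrivial connected \'etale cover: such a cover is trivial on each rational curve, and this triviality propagates along chains. Applying this to $X_y=f^{-1}(y)$, the \'etale cover $\sigma^{-1}(X_y)\to X_y$ must be trivial, so $\sigma^{-1}(X_y)$ has $|G|$ connected components. But $\sigma^{-1}(X_y)=g^{-1}(v_0)$, since $v_0$ is the only point of $V$ over $y$, and $g$ has connected fibers; hence $|G|=1$, i.e.\ $Y$ is smooth at $y$. I expect the main obstacle to be precisely the \'etale-ness of $\sigma$: it is the step where smoothness of $X$ (for purity) and equidimensionality (keeping $f^{-1}(\Sing Y)$ in codimension $\geq 2$) must combine to make the quotient-singularity hypothesis collapse, with the simple connectedness of the possibly singular special fiber $X_y$ as the other essential input.
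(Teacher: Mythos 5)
Your construction is, up to the \'etale-ness of $\sigma$, exactly the one in the proof that the paper invokes (\cite[Prop.~1.4]{ABW}, attributed to Fujita): local presentation $Y=V/G$ with $G$ small, base change along $f$, normalization, and the combination of equidimensionality (which keeps the branch locus of $\sigma$ inside $f^{-1}(\Sing(Y))$, of codimension $\geq 2$) with Zariski--Nagata purity. All of that is correct. The gap is in your endgame: rational chain connectedness of the fiber $X_y$ does \emph{not} imply that a connected finite \'etale cover of $X_y$ is trivial. An \'etale cover restricted to an irreducible rational curve is only trivial after pullback to the normalization $\pr^1$; on a nodal rational curve, or around a cycle of rational curves, there is monodromy. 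A nodal plane cubic, or a cycle of two smooth rational curves, is rationally chain connected yet carries connected \'etale covers of every degree (necklaces of rational curves). Your propagation argument proves only that every connected component of $\sigma^{-1}(X_y)$ surjects onto $X_y$; that is perfectly compatible with $\sigma^{-1}(X_y)=g^{-1}(v_0)$ being connected of degree $|G|>1$. Since $X_y$ can be non-normal and can a priori contain such cycle configurations, the Hacon--McKernan RCC statement is genuinely insufficient: you would need simple connectedness (or at least triviality of \'etale covers) of \emph{every} fiber of a $(K+\Delta)$-negative contraction, which you have not established and which does not follow from what you cite.

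This is precisely the difficulty that the paper's argument is designed to avoid, and it explains the hypotheses you never used. The paper first notes that the general fiber $F$ is log Fano, so Kawamata--Viehweg vanishing gives $h^i(F,\mathcal{O}_F)=0$ for $i>0$, hence $\chi(\mathcal{O}_F)=1$; then Fujita's endgame applies: $W$ is smooth (being \'etale over the smooth $X$), hence Cohen--Macaulay, $V$ is smooth, and $g$ is equidimensional, so $g$ is \emph{flat} by miracle flatness and $\chi(\mathcal{O}_{g^{-1}(v)})$ is independent of $v$, equal to $\chi(\mathcal{O}_F)=1$. Since $\sigma$ is \'etale of degree $|G|$ with $W/G=X$, the action of $G$ on $W$ is free and preserves $g^{-1}(v_0)$, so $|G|$ divides $\chi(\mathcal{O}_{g^{-1}(v_0)})=1$, forcing $G=\{1\}$. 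In short: replace your fundamental-group step by the Euler-characteristic count, which uses flatness over the smooth local cover and the log Fano condition on $F$, and sidesteps the topology of the possibly very singular special fiber altogether.
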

\begin{proof}
Let $F\subset X$ be a general fiber of $f$. Then $F$ is smooth and $(F,\Delta_{|F})$ is klt \cite[Lemma 5.17]{kollarmori}; moreover $-(K_F+\Delta_{|F})\equiv -(K_X+\Delta)_{|F}$ is ample, so that $(F,\Delta_{|F})$ is log Fano. By Kawamata-Viehweg vanishing, $h^i(F,\ol_F)=0$ for every $i>0$, hence 
$\chi(F,\ol_F)=1$. Then the same proof as  \cite[Prop.~1.4]{ABW} applies.
\end{proof}
\begin{lemma}\label{target2}
Let $X$ be a smooth projective variety with $\dim X\geq 3$, and $\Delta$ a $\Q$-divisor on $X$  such that $(X,\Delta)$ is klt. Let $f\colon X\to S$ be an equidimensional, $(K+\Delta)$-negative contraction onto a surface. Then $S$ is smooth.
\end{lemma}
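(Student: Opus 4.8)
The plan is to deduce smoothness of $S$ from Lemma~\ref{quotient}, whose only hypothesis not already granted is that $S$ have at worst finite quotient singularities. So the entire task reduces to showing that the normal surface $S$ has finite quotient singularities, and the route I would take is to first identify its singularities as canonical, and then appeal to the surface classification.

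First I would check that the hypotheses of Lemma~\ref{lf} are met. Since $\dim X\geq 3$ and $S$ is a surface, $f$ is genuinely of fiber type; being equidimensional, $f$ is special by Lemma~\ref{kollar}. Moreover $X$, being smooth, is in particular locally factorial with canonical singularities, $(X,\Delta)$ is klt, and $f$ is $(K+\Delta)$-negative of fiber type by assumption. Applying Lemma~\ref{lf} with $Y=S$ therefore yields that $S$ has locally factorial, canonical singularities. The next step is purely two-dimensional: in dimension $2$ the canonical singularities are exactly the Du Val (rational double point, $ADE$) singularities, and these are precisely the quotients $\C^2/\Gamma$ by finite subgroups $\Gamma\subset\operatorname{SL}(2,\C)$. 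Hence the (isolated) singular points of the normal surface $S$ are all finite quotient singularities.

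With this in hand, all the hypotheses of Lemma~\ref{quotient} are verified: $X$ is smooth, $(X,\Delta)$ is klt, $f$ is equidimensional and $(K+\Delta)$-negative of fiber type, and $S$ has at worst finite quotient singularities. Lemma~\ref{quotient} then gives that $S$ is smooth, which is the assertion. I do not expect any single hard computational step here, since the preceding lemmas carry the weight; the point on which everything turns is the bridge between Lemma~\ref{lf} and Lemma~\ref{quotient}, namely the classical identification of canonical surface singularities with quotient singularities. The only thing requiring care is the verification that Lemma~\ref{lf} truly applies — that equidimensionality makes $f$ special via Lemma~\ref{kollar}, and that the smoothness of $X$ supplies the locally factorial, canonical hypothesis — after which Lemma~\ref{quotient} finishes the argument.
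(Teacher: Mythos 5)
Your proof is correct, but it reaches the key intermediate fact --- that $S$ has finite quotient singularities --- by a different route than the paper. The paper's proof is lighter: it notes $S$ is $\Q$-factorial by Lemma~\ref{kollar}, invokes Fujino's result \cite[Cor.~4.5]{fujino} directly to produce a $\Q$-divisor $\Delta'$ on $S$ with $(S,\Delta')$ klt, concludes that $S$ has log terminal singularities, and then uses the standard fact that log terminal surface singularities are finite quotient singularities \cite[Prop.~4.18]{kollarmori}; Lemma~\ref{quotient} then finishes, exactly as in your argument. You instead route through Lemma~\ref{lf} (after correctly checking its hypotheses: equidimensionality gives speciality via Lemma~\ref{kollar}, and smoothness of $X$ supplies the locally factorial canonical hypothesis) to get the stronger conclusion that $S$ has locally factorial, \emph{canonical} singularities, and then use the classification of canonical surface singularities as Du Val points, i.e.\ quotients $\C^2/\Gamma$ with $\Gamma\subset\operatorname{SL}(2,\C)$ finite. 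Both chains are valid and terminate in the same application of Lemma~\ref{quotient}; note there is no circularity, since Lemma~\ref{lf} precedes this statement and does not depend on it. What your version buys is reuse of machinery already proved in the paper and a stronger intermediate conclusion (Du Val rather than merely log terminal); what it costs is invoking the full strength of Lemma~\ref{lf}, whose proof contains a nontrivial local factoriality argument (and itself rests on the same Fujino input) that is not needed here --- the paper extracts only the klt pair on $S$ and gets quotient singularities from log terminality alone, which is the more economical path. One cosmetic point: Lemma~\ref{lf} is stated for a \emph{boundary} $\Delta$, while here $\Delta$ is only assumed to be a $\Q$-divisor with $(X,\Delta)$ klt; under the standard convention that klt pairs have effective $\Delta$ with coefficients in $[0,1)$ this is harmless, and indeed the paper itself applies Lemma~\ref{lf} under the same hypotheses in the proof of Prop.~\ref{sing}.
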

\begin{proof}
Notice first of all that $S$ is $\Q$-factorial by Lemma \ref{kollar}. Moreover, by \cite[Cor.~4.5]{fujino}, there exists  $\Q$-divisor $\Delta'$ on $S$ such that $(S,\Delta')$ is klt; in particular $S$ has log terminal  singularities, and hence finite quotient singularities \cite[Prop.~4.18]{kollarmori}. Then $S$ is smooth by Lemma \ref{quotient}.
\end{proof}
\begin{lemma}\label{cod3}
Let $X$ be a smooth projective variety, $\Delta$ a $\Q$-divisor on $X$ such that $(X,\Delta)$ is klt, and $f\colon X\to Y$ a $(K+\Delta)$-negative contraction of fiber type.

Suppose that the locus where $f$ is not equidimensional has codimension at least $3$ in $Y$, equivalently that there is no prime divisor $D\subset X$ such that $\codim f(D)=2$.

 Then $Y$ is smooth in codimension $2$.
\end{lemma}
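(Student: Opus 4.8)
The plan is to reduce to the surface case, Lemma \ref{target2}, by cutting $Y$ with a general complete intersection of very ample divisors and arguing by contradiction. Write $d:=\dim Y$ and let $Z\subset Y$ denote the closed locus over which $f$ is not equidimensional; by hypothesis $\codim_Y Z\geq 3$. Since $Y$ is normal it is smooth in codimension $1$, so it suffices to rule out the possibility that $\Sing(Y)$ has a component $W$ of codimension exactly $2$. Assume such a $W$ exists.

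Next I would choose a general complete intersection surface $S:=A_1\cap\dotsb\cap A_{d-2}$, where the $A_i$ are general members of a very ample linear system on $Y$. General choice should give simultaneously: $S$ is a normal surface (Bertini for normality, using $\dim\geq 2$ at each step); $S\cap Z=\emptyset$, since $\dim Z\leq d-3$ forces $\dim(S\cap Z)<0$; $S\cap W$ is a nonempty finite set; and $\Sing(S)=S\cap\Sing(Y)$, so that in particular $S$ is \emph{singular} at the points of $S\cap W$. The last point is the Bertini-type statement that a general section of a variety singular along $W$ remains singular along $W\cap S$.

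Then I would set $X_S:=f^{-1}(S)$ and $f_S:=f|_{X_S}\colon X_S\to S$, and check that the hypotheses of Lemma \ref{target2} are met. Because $X_S=f^*A_1\cap\dotsb\cap f^*A_{d-2}$ is a general complete intersection of base-point-free divisors in the smooth variety $X$, it is smooth by Bertini; since $f$ is of fiber type and $f_S$ is equidimensional over $S$ (as $S\cap Z=\emptyset$), one gets $\dim X_S=\dim X-(d-2)\geq 3$. The restricted pair $(X_S,\Delta|_{X_S})$ is klt by \cite[Lemma 5.17]{kollarmori} (general members of base-point-free systems), $f_S$ is a contraction onto the normal surface $S$, and it is $(K_{X_S}+\Delta|_{X_S})$-negative: by adjunction $K_{X_S}+\Delta|_{X_S}=(K_X+\Delta)|_{X_S}+\sum_i (f^*A_i)|_{X_S}$, and each $f^*A_i$ is numerically trivial on the curves contracted by $f_S$, so negativity is inherited from the $(K+\Delta)$-negativity of $f$.

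Applying Lemma \ref{target2} then forces $S$ to be smooth, contradicting the fact that $S$ is singular along $S\cap W$. Hence $\Sing(Y)$ has codimension at least $3$, i.e. $Y$ is smooth in codimension $2$. The main obstacle is the verification step: one must make the single general section $S$ do several things at once --- stay away from $Z$, meet the codimension-$2$ singular locus, remain normal, and inherit the singularity of $Y$ --- and one must confirm that $f_S$ genuinely satisfies the equidimensionality and $(K+\Delta)$-negativity needed to invoke Lemma \ref{target2}. The singularity-inheritance statement $\Sing(S)=S\cap\Sing(Y)$ is the crux, since it is what produces the contradiction.
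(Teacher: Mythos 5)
Your proposal is correct and follows essentially the same route as the paper: cut $Y$ by $\dim Y-2$ general very ample divisors to get a normal surface $S$ avoiding the non-equidimensionality locus, restrict $f$ to the smooth complete intersection $f^{-1}(S)$, check kltness via \cite[Lemma 5.17]{kollarmori} and $(K+\Delta)$-negativity via adjunction and the projection formula, and invoke Lemma \ref{target2}. The only (cosmetic) difference is that you argue by contradiction and make explicit the Bertini-type fact $\Sing(S)=S\cap\Sing(Y)$, which the paper uses implicitly when it concludes from the smoothness of $S$ that $S\subseteq Y_{reg}$ and hence $\codim\Sing Y\geq 3$.
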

\begin{proof}
Set $m=\dim Y$ and let $H_1,\dotsc,H_{m-2}$ be general very ample divisors in $Y$. Consider  $S:=H_1\cap\cdots\cap H_{m-2}$ and $Z:=f^{-1}(S)=f^*H_1\cap\cdots f^*H_{m-2}$. Then $S$ is a normal projective surface, $Z$ is smooth, and $f$ is equidimensional over $S$, so that $f_Z:=f_{|Z}\colon Z\to S$ is an equidimensional contraction.
Moreover $(Z,\Delta_{|Z})$ is klt \cite[Lemma 5.17]{kollarmori}.

Let $C\subset Z$ be a curve contracted by $f$; then $f^*H_i\cdot C=0$ for every $i$, so that by adjunction $(K_Z+\Delta_{|Z})\cdot C=(K_X+\Delta)\cdot C<0$, and $f_Z$ is $(K_Z+\Delta_{|Z})$-negative. Thus $S$ is smooth by Lemma \ref{target2}, so $S\subseteq Y_{reg}$ and hence $\codim\Sing Y\geq 3$.
\end{proof}
Prop.~\ref{sing} follows from Lemma \ref{lf}, Rem.~\ref{unico}$(\ref{equidimensional})$, and Lemma \ref{cod3}.
\section{Special contractions of Fano varieties of relative dimension 1}
\subsection{Preliminaries on the Lefschetz defect}\label{prelLD}
\noindent Let $X$ be a normal and $\Q$-factorial Fano variety. The {\em Lefschetz defect} 
$\delta_X$ is an invariant of $X$,  introduced in \cite{codim}, and defined as follows:
$$\delta_X=\max\left\{\codim\N(D,X)\,|\,D\text{ a prime divisor in }X\right\}$$
(see \S \ref{terminology} for $\N(D,X)$).
The main properties of $\delta_X$ are the following.
\begin{thm}[\cite{codim,gloria}]\label{trento}
Let $X$ be a $\Q$-factorial, Gorenstein Fano variety, with canonical singularities and at most finitely many non-terminal points. Then $\delta_X\leq 8$.

If moreover $X$ is smooth and $\delta_X\geq 4$, then $X\cong S\times Y$, where $S$ is a surface.
\end{thm}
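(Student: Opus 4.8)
The plan is to study a prime divisor $D$ realising the maximum in the definition of $\delta_X$, and to convert the resulting ``deficiency'' of curve classes in $D$ into a fibration whose general fibre is a del Pezzo surface; the bound $\delta_X\le 8$ then follows from the classical bound $\rho\le 9$ for del Pezzo surfaces, and the product statement from a rigidity phenomenon available once $\delta_X\ge 4$. So first I fix $D$ with $\codim\N(D,X)=\delta_X$ and set $V:=\N(D,X)^{\perp}\subseteq\Nu(X)$, the $\delta_X$-dimensional space of divisor classes that are numerically trivial on every curve contained in $D$; dually, $\N(D,X)$ has dimension $\rho_X-\delta_X$.

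\emph{The bound.} First I would produce a fibre type contraction $p\colon X\dasharrow Y$ (possibly after an SQM, and a priori only rational) with $\dim Y=\dim X-2$, so that its general fibre $F$ is a surface meeting $D$ in a curve. The aim of the construction is to arrange that $\N(F,X)$ is a complement of $\N(D,X)$ meeting it exactly in a line, which forces $\dim\N(F,X)=\delta_X+1$ and hence $\delta_X=\dim\N(F,X)-1\le\rho_F-1$. Since $F$ is a general fibre of a fibration from a Fano variety, adjunction gives $-K_F\equiv -K_X|_F$ ample, so $F$ is a del Pezzo surface; the Gorenstein, canonical, and finitely-many-non-terminal-points hypotheses on $X$ guarantee that $F$ has at worst canonical (du Val) singularities, so its minimal resolution is a weak del Pezzo surface with $\rho\le 9$. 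As resolving only decreases the Picard number, $\rho_F\le 9$, whence $\delta_X\le 8$.

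\emph{The product.} Now assume $X$ smooth and $\delta_X\ge 4$, so that the surface $S:=F$ is a smooth del Pezzo with $\rho_S=\delta_X+1\ge 5$. Here the task is to upgrade $p$ to a trivial product. I would first show that $p$ is an everywhere-defined, equidimensional (indeed smooth) morphism with all fibres isomorphic to the fixed surface $S$, using that $X$ is Fano together with the singularity and factorisation results of \S\ref{special} (e.g.\ Lemma~\ref{kollar} and Prop.~\ref{sing}) to control the base and the degenerate fibres. One then has to trivialise the family: the hypothesis $\rho_S\ge 5$ is the rigidity input, since every del Pezzo surface of Picard number $\ge 5$ carries a rich, rigid configuration of disjoint exceptional curves, and the corresponding relative divisorial contractions of $p$ can be globalised over $Y$; this both rules out monodromy and produces a second contraction $q\colon X\to S$. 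Finally I would check that $(p,q)\colon X\to Y\times S$ is an isomorphism, giving $X\cong S\times Y$.

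\emph{Main obstacle.} The genuinely hard part is the construction and analysis of $p$. The surface is \emph{not} determined by the linear algebra of $\N(D,X)$ alone: $\N(F,X)$ is only constrained to be a complement of $\N(D,X)$ meeting it in a line, so $p$ must be produced geometrically --- by organising the extremal rays (equivalently, the families of rational curves) transverse to $D$ into a genuine fibre type contraction, and then pinning down its fibre dimension. Proving global triviality of the resulting del Pezzo fibration, and reconstructing the second projection, is the heart of the product statement; it is also precisely where the threshold $\delta_X\ge 4$ must enter, since non-product Fano manifolds with $\delta_X=3$ exist, so any argument not using $\rho_S\ge 5$ would prove too much.
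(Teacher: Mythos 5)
First, the benchmark: the paper contains no proof of this statement at all --- Theorem \ref{trento} is quoted from \cite{codim} (smooth case) and \cite{gloria} (the singular, Gorenstein canonical extension). Measured against the strategy of those cited papers, your outline has the right skeleton: in \cite{codim} a prime divisor $D$ with $\codim\N(D,X)\geq 3$ is indeed shown to force, after an SQM, a quasi-elementary rational contraction of relative dimension two whose general fibre $F$ is a del Pezzo surface, and the bound follows from exactly the linear algebra you describe, $\delta_X\leq\dim\N(F,X)-1\leq\rho_F-1\leq 8$.

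As a proof, however, the proposal has a genuine gap, and you name it yourself: the existence and structure of the fibration $p$ is not established, and this is not a refinement to be added later --- it is the entire content of the theorem. Nothing in the definition of $\delta_X$ produces a fibre-type contraction; in \cite{codim} it is extracted through a long MMP analysis (Mori programs for $-D$, control of exceptional loci, and the quasi-elementary property, which is what makes $\N(F,X)$ interact with $\N(D,X)$ as you want). In particular you must argue that $D$ dominates the base, since otherwise $F\cap D=\emptyset$ and your ``meeting in a line'' computation collapses; and since $p$ is a priori only a rational contraction, $-K_F$ is not automatically ample --- the model $\w{X}$ on which $p$ is regular is not Fano, so one must show the general fibre avoids the indeterminacy of the SQM (compare Lemma \ref{basic1}). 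Redoing all of this for $\Q$-factorial Gorenstein canonical $X$ is precisely Della Noce's contribution in \cite{gloria}, where one also uses the finitely-many-non-terminal-points hypothesis to ensure the general fibre avoids those points (a minor slip here: resolving \emph{increases} the Picard number; the inequality you actually use, $\rho_F\leq\rho_{\widetilde{F}}\leq 9$ for the minimal resolution $\widetilde{F}$, is the correct one). The product statement is likewise only asserted: ``the rigid configuration of exceptional curves can be globalised \dots rules out monodromy and produces $q\colon X\to S$'' is a programme, not an argument, and the corresponding deduction in \cite{codim} --- where quasi-elementarity and $\dim\N(F,X)=\rho_F\geq 5$ are used to make the surjectivity of $\Nu(X)\to\Nu(F)$ globalise the fibrewise contractions --- takes substantial work. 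Your sanity check that non-product Fano manifolds with $\delta_X=3$ exist is a good guard-rail for where the threshold must enter, but it does not fill either gap.
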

\begin{thm}[\cite{codim}, Cor.~1.3 and \cite{cdue}, Th.~1.2]\label{buonconsiglio}
Let $X$ be a smooth Fano $4$-fold. Then one of the following holds:
\begin{enumerate}[$(i)$]
\item $X$ is a product of surfaces;
\item $\delta_X=3$ and $\rho_X\leq 6$;
\item $\delta_X=2$ and $\rho_X\leq 12$;
\item $\delta_X\leq 1$.
 \end{enumerate}
\end{thm}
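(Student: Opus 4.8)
The plan is to use Theorem~\ref{trento} to dispose of the extreme values of $\delta_X$ formally, and then to treat the two intermediate values $\delta_X=3$ and $\delta_X=2$ by a direct analysis of an extremal prime divisor together with the Mori theory of the Fano $4$-fold. First I would observe that a smooth Fano $4$-fold automatically satisfies the hypotheses of Theorem~\ref{trento}: it is $\Q$-factorial and locally factorial, Gorenstein, and terminal (hence canonical with no non-terminal point), so that $\delta_X$ is well defined and $\delta_X\leq 8$. If $\delta_X\leq 1$ we are in case $(iv)$ and there is nothing to prove. If $\delta_X\geq 4$, the second part of Theorem~\ref{trento} gives an isomorphism $X\cong S\times Y$ with $S$ a surface; since $\dim X=4$ the complementary factor $Y$ is also a surface, so $X$ is a product of surfaces and we are in case $(i)$. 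This reduces the theorem to the two quantitative statements: $\delta_X=3\Rightarrow\rho_X\leq 6$, and $\delta_X=2\Rightarrow\rho_X\leq 12$.

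For these bounds I would fix a prime divisor $D\subset X$ realizing the defect, so that $\codim\N(D,X)=\delta_X$ and $\dim\N(D,X)=\rho_X-\delta_X$. The idea is to control $\rho_X$ by studying the geometry of $D$ inside the Fano $4$-fold $X$ together with the cone of curves. Since $X$ is Fano, $\NE(X)$ is rational polyhedral and every extremal ray is spanned by a rational curve; one then examines the elementary contractions of $X$, using the classification of their types (divisorial of the various types $(3,b)$, small, and of fiber type), and how $D$ meets their fibers and exceptional loci. The role of the defect is that, passing to the transverse quotient $\N(X)/\N(D,X)\cong\R^{\delta_X}$, the curve classes not already contained in $\N(D,X)$ are squeezed into a space of dimension $\delta_X$, which severely limits the possible configurations of extremal rays and hence bounds $\rho_X$.

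The main obstacle is the detailed, type-by-type analysis that this requires, which is the substance of the works \cite{codim} and \cite{cdue}. One has to enumerate the possibilities for $D$ and for the contractions interacting with it, track curve classes through flips of the $4$-folds obtained from $X$, and in several configurations invoke the geometry of exceptional planes and exceptional lines (as introduced in \S\ref{terminology}) and of the normal bundle of $D$. The reduction carried out in the first paragraph is purely formal and rests only on Theorem~\ref{trento}; by contrast, the inequalities $\rho_X\leq 6$ (for $\delta_X=3$) and $\rho_X\leq 12$ (for $\delta_X=2$) are the deep quantitative input, and I expect the case $\delta_X=2$ to be the hardest, since the smaller defect imposes the weakest constraint and admits by far the largest number of cases.
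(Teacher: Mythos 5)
Your proposal matches the paper in substance: the paper gives no internal proof of this theorem, which is imported directly from \cite{codim}, Cor.~1.3 and \cite{cdue}, Th.~1.2, and your formal reduction of the extreme cases via Th.~\ref{trento} (noting that a smooth Fano $4$-fold satisfies its hypotheses, that $\delta_X\geq 4$ forces $X\cong S\times Y$ with both factors surfaces, and that $\delta_X\leq 1$ is case $(iv)$) combined with deferring the bounds $\rho_X\leq 6$ for $\delta_X=3$ and $\rho_X\leq 12$ for $\delta_X=2$ to those two references is exactly how the statement is assembled. Your sketch of the defect-realizing divisor and the analysis in the quotient $\N(X)/\N(D,X)$ is a fair description of the strategy of the cited works, and since the paper itself treats these quantitative bounds as external input, nothing more is required.
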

\subsection{The case of relative dimension one}
\noindent In this subsection we show that if $X$ is a Fano variety and $f\colon X\to Y$ is a special contraction with $\dim Y=\dim X-1$, then $\rho_X-\rho_Y\leq 9$; this is a generalization of an analogous result in \cite{eleonora} in the case where $f$ is a conic bundle. The strategy of proof is the same: we use $f$ to produce $\rho_X-\rho_Y-1$ pairwise disjoint divisors in  $X$, and then we use them to show that if $\rho_X-\rho_Y\geq 3$, then $\delta_X\geq \rho_X-\rho_Y-1$; finally we apply Th.~\ref{trento}.
\begin{proposition}\label{reldim1}
Let $X$ be a $\Q$-factorial, Gorenstein Fano variety, with canonical singularities and at most finitely many non-terminal points. 
Let $f\colon X\to Y$ be a special contraction with $\dim Y=\dim X-1$. Then the following holds:
 \begin{enumerate}[$(a)$]
\item $\rho_X-\rho_Y\leq 9$;
\item  if $\rho_X-\rho_Y\geq 3$, then
 $\delta_X\geq\rho_X-\rho_Y-1$.
\end{enumerate}

If moreover $X$ is smooth and  $\rho_X-\rho_Y\geq 5$, then there exists a surface $S$ such that $X\cong S\times Z$, $Y\cong\pr^1\times Z$, and $f$ is induced by a conic bundle $S\to\pr^1$.
\end{proposition}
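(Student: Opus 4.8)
The plan is to feed part $(b)$ into the structure Theorem~\ref{trento} to split off a surface, and then to show that the fibration $f$ is carried by that surface, arguing by induction on $\dim X$ (the statement to be proved being exactly this ``moreover'' clause). First, since $\rho_X-\rho_Y\geq 5>3$, part $(b)$ gives $\delta_X\geq\rho_X-\rho_Y-1\geq 4$. As $X$ is smooth, Theorem~\ref{trento} then yields a decomposition $X\cong S\times Z$ with $S$ a surface. Both factors are smooth, and since $-K_X=p_S^*(-K_S)+p_Z^*(-K_Z)$ is ample both are Fano; thus $S$ is a del Pezzo surface and $Z$ is a smooth Fano variety of dimension $\dim X-2$.

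The next step is to decompose $f$ compatibly with the product. I would set $M:=f^*A$ for $A$ ample on $Y$, so that $M$ is nef, non-big, and $f$ is the morphism attached to $|mM|$ for $m\gg 0$. Using $\Nef(X)=\Nef(S)\times\Nef(Z)$ one writes $M=p_S^*M_S+p_Z^*M_Z$ with $M_S\in\Nef(S)$ and $M_Z\in\Nef(Z)$ semiample; by the K\"unneth splitting $H^0(X,mM)=H^0(S,mM_S)\otimes H^0(Z,mM_Z)$ the map of $|mM|$ is the product $\ph_S\times\ph_Z$ of the two semiample fibrations $\ph_S\colon S\to S'$ and $\ph_Z\colon Z\to Z'$. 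Hence $Y\cong S'\times Z'$ and $f=\ph_S\times\ph_Z$, so the relative dimension splits as $1=(\dim S-\dim S')+(\dim Z-\dim Z')$, with both summands nonnegative.

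Now I would pin down the two factor maps using that $f$ is special. If $\ph_Z$ contracted a prime divisor $G\subset Z$, then $S\times G$ would be a prime divisor of $X$ with $f(S\times G)=S'\times\ph_Z(G)$ of codimension $\geq 2$ in $Y$, contradicting $\codim f(D)\leq 1$ (Rem.~\ref{Qfact}); the same applies to $\ph_S$, so neither factor map contracts a divisor. Moreover $Y$, being the target of a special contraction, is $\Q$-factorial (Rem.~\ref{Qfact}), hence so are $S'$ and $Z'$; a birational contraction onto a $\Q$-factorial target that contracts no divisor is an isomorphism. Therefore any factor map of relative dimension $0$ is the identity, and as the two relative dimensions sum to $1$, exactly one factor map is the identity while the other has relative dimension $1$. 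If $\ph_Z=\Id$, then $\ph_S\colon S\to S'$ is a fibration of the del Pezzo surface $S$ onto a smooth $\Q$-factorial curve; since $S$ is rational this curve is $\pr^1$, and by adjunction on $S$ the general fibre is $\pr^1$, so $\ph_S$ is a conic bundle and $Y\cong\pr^1\times Z$, $f=\ph_S\times\Id_Z$, as wanted. If instead $\ph_S=\Id$, then $\ph_Z\colon Z\to Z'$ is a special contraction (specialness is inherited from $f$ exactly as above, using $\Q$-factoriality of $Z'$) of the smooth Fano variety $Z$, of relative dimension $1$, with $\rho_Z-\rho_{Z'}=\rho_X-\rho_Y\geq 5$; since $\dim Z<\dim X$, the inductive hypothesis gives a surface $S_0$ with $Z\cong S_0\times Z_0$, $Z'\cong\pr^1\times Z_0$ and $\ph_Z=g_0\times\Id_{Z_0}$ for a conic bundle $g_0\colon S_0\to\pr^1$. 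Setting $\widetilde Z:=S\times Z_0$ then gives $X\cong S_0\times\widetilde Z$, $Y\cong\pr^1\times\widetilde Z$ and $f=g_0\times\Id_{\widetilde Z}$, completing the induction.

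The step I expect to be the main obstacle is the product decomposition of $f$: one must verify that the semiample fibration of the non-big nef class $M$ is genuinely the product $\ph_S\times\ph_Z$ and that its image is $S'\times Z'$, so that $f$ itself (not merely the associated rational map) splits as a product and $Y$ inherits the product structure. Once this is secured, the combinatorics is entirely forced by specialness (via $\codim f(D)\leq1$) together with $\Q$-factoriality of $Y$, and the induction disposes of the possibility that the fibration ``lives'' in the complementary factor $Z$.
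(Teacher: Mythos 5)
There is a genuine gap, and it is structural: you have proved (at best) only the final ``moreover'' clause, while treating parts $(a)$ and $(b)$ as available inputs --- your opening sentence even says the statement to be proved is ``exactly this moreover clause''. But $(a)$ and $(b)$ are part of the proposition, and they are its real content. The paper's proof runs in the opposite direction: first, Lemma \ref{klt} (using that $K_X$ is Cartier in codimension $2$) shows $\rho_X=\rho_Y+1+m$ and that each $f^*B_i$ has exactly two components, each negative on one of the two lines forming the general fiber over $B_i$; then Lemma \ref{gor} shows $B_1,\dotsc,B_m$ are pairwise disjoint; then, following \cite[Lemmas 3.9 and 3.10]{eleonora}, these $m$ pairwise disjoint reducible fibers are used to prove $\delta_X\geq m=\rho_X-\rho_Y-1$ when $m\geq 2$, and Th.~\ref{trento} ($\delta_X\leq 8$, plus the splitting when $\delta_X\geq 4$) then delivers $(a)$, $(b)$, and the product statement at once, with the compatibility of $f$ with the splitting handled as in \cite[Th.~1.3]{eleonora}. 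None of this divisor-production and Lefschetz-defect estimation appears in your proposal. Note also that the gap propagates through your induction: at every dimension the inductive step consumes part $(b)$ at that dimension (to get $\delta\geq 4$ and invoke Th.~\ref{trento}), so the unproven ingredient is not confined to the top level --- the argument is circular as a proof of the full proposition.

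That said, the part you did write --- deriving the compatibility of $f$ with a given splitting $X\cong S\times Z$ --- is essentially sound and is a genuinely different route from the paper's appeal to \cite{eleonora}. The K\"unneth mechanism works: since both factors of a smooth Fano product have vanishing irregularity, $\Nu(X)=p_S^*\Nu(S)\oplus p_Z^*\Nu(Z)$ and the nef cone splits; nef implies semiample on a Fano, so $f$ (being the semiample fibration of $f^*A$) is the product $\ph_S\times\ph_Z$; speciality via Rem.~\ref{Qfact} excludes images of codimension $\geq 2$, $\Q$-factoriality of $Y$ descends to the factors by slicing, and a small birational contraction onto a $\Q$-factorial target is an isomorphism, which forces exactly one factor map to be an isomorphism. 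The induction on $\dim X$ in the case $\ph_S=\Id$ is coherent (the base case $\dim Z=2$ is the trivial conic-bundle statement for del Pezzo surfaces, and $\ph_Z$ inherits speciality as you say) --- but, again, only conditionally on $(b)$, which you must still prove via the disjoint-divisor construction of Lemmas \ref{klt} and \ref{gor}.
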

For the proof of Prop.~\ref{reldim1} we need some technical lemmas, that will be used also in \S~\ref{sec_3folds}.
\begin{lemma}\label{klt}
Let $X$ be a Mori dream space, and suppose that $K_X$ is Cartier in codimension $2$, namely 
that there exists a closed subset $T\subset X$ such that $\codim T\geq 3$ and $K_{X\smallsetminus T}$ is Cartier.

 Let $f\colon X\to Y$ be a $K$-negative special contraction with $\dim Y=\dim X-1$; notation as in \ref{notation}. Then $\rho_X=\rho_Y+1+m$ and $r_i=2$ for every $i=1,\dotsc,m$. 
 
 Let moreover $E_i,\wi{E}_i$ be the irreducible components of $f^*B_i$. Then the general fiber of $f$ over $B_i$ is $e_i+\hat{e}_i$, where $e_i$ and $\hat{e}_i$ are integral curves with $E_i\cdot e_i<0$, $\wi{E}_i \cdot\hat{e}_i<0$, and $-K_X\cdot e_i=-K_X\cdot \hat{e}_1=1$.
\end{lemma}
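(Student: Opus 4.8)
The plan is to analyze the general fiber of $f$ carefully. Since $\dim Y = \dim X - 1$, the contraction $f$ has relative dimension $1$, so the general fiber $F$ is a (possibly reducible) curve. Because $X$ has the stated singularities and $f$ is $K$-negative of fiber type, the general fiber should be a tree of rational curves, and I expect the generic fiber to be a smooth $\pr^1$. First I would invoke Corollary~\ref{boundrho}, which gives $\rho_X = \rho_Y + \dim\N(F,X) + \sum_{i=1}^m(r_i-1)$. Since $F$ is a curve contracted by $f$ and $X$ is $\Q$-factorial, the general fiber $F$ is irreducible (as $B\not\subset\{B_1,\dots,B_m\}$ for general $B$), so $\dim\N(F,X) = 1$. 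Hence $\rho_X = \rho_Y + 1 + \sum_{i=1}^m(r_i - 1)$, and the claim $\rho_X = \rho_Y + 1 + m$ will follow once I show $r_i = 2$ for all $i$.

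\textbf{Showing $r_i = 2$.}
The key point is to examine the fiber $f^*B_i$ over the special divisor $B_i$. By Remark~\ref{unico}$(\ref{equidimensional})$, since $f$ is special, the non-equidimensional locus has codimension $\geq 3$ in $Y$; in particular, $f$ is equidimensional over the generic point of $B_i$, so a general fiber over $B_i$ is again a $1$-dimensional cycle. Using the $K$-negativity and the structure theory of extremal rays for such contractions, I would argue that the anticanonical degree of the generic fiber over $B_i$ is $2$ (the same as the general fiber, by flatness/semicontinuity of $-K_X\cdot F$). Each component $E_i^{(j)}$ of $f^*B_i$ dominates $B_i$, and the generic fiber over $B_i$ decomposes as a sum of integral curves $c_j$, one in each component, each with $-K_X\cdot c_j \geq 1$. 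Since the total degree is $2$ and there are $r_i \geq 2$ components, we must have $r_i = 2$ with each $-K_X\cdot c_j = 1$. This gives the decomposition $e_i + \hat{e}_i$ with $-K_X\cdot e_i = -K_X\cdot\hat{e}_i = 1$.

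\textbf{The negativity of intersection numbers.}
For the final assertions $E_i\cdot e_i < 0$ and $\widetilde{E}_i\cdot\hat{e}_i < 0$, I would use that $e_i\subset E_i$ moves in a family covering $E_i$ (the fibers over $B_i$ sweep out $E_i$), while $E_i$ is a fixed divisor by Corollary~\ref{face}. A curve moving inside a fixed prime divisor must have negative intersection with that divisor: since $[E_i]$ lies on an extremal face of $\Eff(X)$ disjoint from $\Mov(X)$, and $e_i$ covers $E_i$, the class $[e_i]$ pairs negatively with $[E_i]$. More concretely, if $E_i\cdot e_i \geq 0$ then $E_i$ would be nef along the family, contradicting that $E_i$ is contractible/fixed. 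The main obstacle, I expect, is making the degree count $-K_X\cdot(\text{fiber over }B_i) = 2$ fully rigorous in the presence of singularities: one needs that the anticanonical degree is constant over the equidimensional locus, which requires that $-K_X$ (Cartier in codimension~$2$ by hypothesis) restricts well to the relevant fibers, and that the generic fiber over $B_i$ has no embedded or multiple structure affecting the count. I would handle this by passing to the smooth surface $S = f^{-1}(\text{general curve through a general point of }B_i)$ cut out by general hyperplane pullbacks, reducing to the surface case where the conic-bundle structure and the degree~$2$ computation are classical.
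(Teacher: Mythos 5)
Your degree count giving $r_i=2$ follows the paper's line, but the final assertion $E_i\cdot e_i<0$, $\wi{E}_i\cdot\hat{e}_i<0$ rests on a false principle, and this is a genuine gap. It is \emph{not} true that a curve moving in a family covering a fixed prime divisor must meet that divisor negatively, nor does the fact that $\langle[E_i]\rangle$ spans a face of $\Eff(X)$ meeting $\Mov(X)$ only in $\{0\}$ force this. Counterexample: take $X=S\times\pr^1$ with $S$ a del Pezzo surface and $e\subset S$ a $(-1)$-curve; then $D:=e\times\pr^1$ is a fixed prime divisor (indeed $h^0(X,mD)=h^0(S,me)=1$ for all $m$), $\langle[D]\rangle$ is an extremal ray of $\Eff(X)$ with $\langle[D]\rangle\cap\Mov(X)=\{0\}$, yet the covering family $C=\{pt\}\times\pr^1$ satisfies $D\cdot C=0$. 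So ``$E_i$ fixed and $e_i$ covering $E_i$'' does not rule out $E_i\cdot e_i\geq 0$, and your ``more concretely'' sentence ($E_i$ nef along the family contradicts fixedness) fails for the same reason. The correct argument, which is the paper's, uses \emph{connectedness of the fiber}: $F_i=e_i+\hat{e}_i$ is connected, so $\hat{e}_i$ meets $e_i\subset E_i$, and since $\hat{e}_i\not\subset E_i$ this gives $E_i\cdot\hat{e}_i>0$; combined with $E_i\cdot F_i=0$ (the fiber $F_i$ over a general point of $B_i$ is algebraically, hence numerically, equivalent to a general fiber $F$, which is disjoint from $E_i$, cf.\ Rem.~\ref{caffe}), one gets $E_i\cdot e_i=-E_i\cdot\hat{e}_i<0$, and symmetrically for $\wi{E}_i$. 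This connectedness step is the missing idea in your proposal.

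A secondary problem is your proposed handling of the singularities at the end. The reduction to ``the smooth surface $S=f^{-1}(\text{general curve})$'' does not work as stated: $X$ is only a Mori dream space, so this surface need not be smooth; worse, $\Sing X$ may have codimension $2$ and dominate $B_i$, i.e.\ sit exactly inside the fibers over $B_i$ you want to analyze, where general hyperplane pullbacks from $Y$ cannot avoid it. No reduction is needed, because the hypothesis $\codim T\geq 3$ is designed to be used directly: $\dim f(T)\leq\dim X-3=\dim Y-2<\dim B_i$, so $T$ does not dominate $B_i$, and the general fiber $F_i$ over $B_i$ lies in $X\smallsetminus T$, where $K_X$ is Cartier. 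Since $f$ is equidimensional over the general point of $B_i$ by Rem.~\ref{unico}$(\ref{equidimensional})$ (as you correctly note), $F_i$ is a $1$-cycle equivalent to the general fiber, so $-K_X\cdot F_i=2$ and each component has integral anticanonical degree $\geq 1$ by $K$-negativity and Cartierness along $F_i$. With this in place, your count $r_i=2$, $-K_X\cdot e_i=-K_X\cdot\hat{e}_i=1$, and the conclusion $\rho_X=\rho_Y+1+m$ via Cor.~\ref{boundrho} all go through exactly as in the paper.
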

\begin{proof}
Fix $i\in\{1,\dotsc,m\}$. The closed subset $T$ cannot dominate $B_i$, hence
the general fiber of $f$ over $B_i$ is a curve $F_i$ contained in $X\smallsetminus T$ where $K_X$ is cartier.
Since $-K_X\cdot F_i=2$, and $f$ is $K$-negative,  $F_i$ has at most two irreducible components. This implies that $r_i=2$ and $F_i=e_i+\hat{e}_i$,
 with $e_i\subset E_i$, $\hat{e}_i\subset \wi{E}_i$, and conversely 
 $e_i\not\subset \wi{E}_i$, $\hat{e}_i\not\subset {E}_i$. The fiber $F_i$ is connected, hence we have $E_i\cap\hat{e}_i\neq\emptyset$, and therefore $E_i\cdot \hat{e}_i>0$. Since $E_i\cdot F_i=0$, we get $E_i\cdot e_i<0$; similarly for $\wi{E}_i$. Finally $\rho_X=\rho_Y+1+m$ by Cor.~\ref{boundrho}. 
\end{proof}
\begin{lemma}\label{gor}
In the setting of Lemma \ref{klt}, if moreover $\codim T\geq 4$, then $B_1,\dotsc,B_m$ are pairwise disjoint. 
\end{lemma}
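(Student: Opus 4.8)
The plan is to argue by contradiction, reducing the disjointness of the $B_i$ to a statement about a single fibre. Since $f$ is surjective and $f^{-1}(B_i)=E_i\cup\wi E_i$, we have $f^{-1}(B_i\cap B_j)=(E_i\cup\wi E_i)\cap(E_j\cup\wi E_j)$, so it suffices to show $B_i\cap B_j=\emptyset$ for $i\ne j$. Suppose not, and let $W$ be an irreducible component of $B_i\cap B_j$; as $B_i,B_j$ are distinct prime divisors, $W$ has codimension $2$ in $Y$, i.e.\ $\dim W=\dim Y-2=\dim X-3$. This is exactly where the strengthened hypothesis $\codim T\ge 4$ is used: since $\dim f(T)\le\dim T\le\dim X-4<\dim W$, the set $f(T)$ cannot contain $W$, so for a general point $y\in W$ the fibre $F_y:=f^{-1}(y)$ is disjoint from $T$, and hence $K_X$ is Cartier along $F_y$. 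Moreover $f$ is special, so its non-equidimensional locus has codimension $\ge 3$ in $Y$ (Rem.~\ref{unico}$(\ref{equidimensional})$), and $Y$ is smooth in codimension $2$ (see \S\ref{secsingularities}); thus for general $y\in W$ the fibre $F_y$ is a curve, $f$ is equidimensional near $F_y$, and $Y$ is smooth at $y$.

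I would then pin down $F_y$ as a conic. Because $X$ is Cohen--Macaulay, $Y$ is smooth at $y$, and $f$ is equidimensional near $F_y$, the contraction $f$ is flat in a neighbourhood of $F_y$; as the general fibre of $f$ has anticanonical degree $2$, so does $F_y$, and since $K_X$ is Cartier along $F_y$ while $f$ is $K$-negative, every irreducible component $C$ of $F_y$ satisfies $-K_X\cdot C\ge 1$. Hence $F_y$ has at most two irreducible components. Now I degenerate the reducible fibres over $B_i$: as $t\to y$ within $B_i$, the curves $e_i(t)$ and $\hat{e}_i(t)$ have flat limits $\zeta_i$ and $\hat\zeta_i$, effective $1$-cycles supported in $F_y$ with $[\zeta_i]=[e_i]$ and $[\hat\zeta_i]=[\hat{e}_i]$. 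From $E_i\cdot\zeta_i=E_i\cdot e_i<0$ and $\wi E_i\cdot\hat\zeta_i=\wi E_i\cdot\hat{e}_i<0$, each of $\zeta_i,\hat\zeta_i$ is a single reduced curve, contained in $E_i$ resp.\ $\wi E_i$; and they are distinct, because $E_i\cdot e_i<0<E_i\cdot\hat{e}_i$ forces $[e_i]\ne[\hat{e}_i]$. Therefore $\zeta_i$ and $\hat\zeta_i$ are precisely the two irreducible components of $F_y$.

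Finally I would bring in $B_j$. Degenerating the fibres over $B_j$ in the same way produces an irreducible reduced curve $\zeta_j\subset E_j\cap F_y$ with $[\zeta_j]=[e_j]$ and $E_j\cdot\zeta_j<0$. Since $\zeta_j$ is a component of $F_y=\zeta_i\cup\hat\zeta_i$, either $\zeta_j=\zeta_i$ or $\zeta_j=\hat\zeta_i$, so $[e_j]\in\{[e_i],[\hat{e}_i]\}$. If $[e_j]=[e_i]$, then for every general $s\in B_j$ we get $E_i\cdot e_j(s)=E_i\cdot e_i<0$, so $e_j(s)\subset E_i$; as the curves $e_j(s)$ sweep out $E_j$, this gives $E_j\subseteq E_i$, contradicting that $E_i\ne E_j$ are distinct prime divisors. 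If $[e_j]=[\hat{e}_i]$, the same argument with $\wi E_i$ in place of $E_i$ gives $E_j\subseteq\wi E_i$, again a contradiction. Hence $B_i\cap B_j=\emptyset$.

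The main obstacle is the middle step: understanding the fibre $F_y$ over the codimension-$2$ locus $B_i\cap B_j$ and showing it is a genuine conic with exactly two components. This is precisely what the hypothesis $\codim T\ge 4$ secures, by pushing the locus where $K_X$ fails to be Cartier (and where $f$ fails to be equidimensional) off the fibres over $B_i\cap B_j$, so that $F_y$ is a flat, anticanonically degree-$2$ fibre; once this is in place, the matching of components and the sign computation are immediate.
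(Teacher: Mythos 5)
Your overall strategy coincides with the paper's: argue by contradiction on an irreducible component $W$ of $B_i\cap B_j$, use $\codim T\geq 4$ to push $T$ off the fibres over $W$ so that $K_X$ is Cartier along a general such fibre, identify that fibre as a degree-$2$ ``conic'' with at most two integral components of anticanonical degree $1$, degenerate the reducible fibres $e_i+\hat{e}_i$ and $e_j+\hat{e}_j$ onto it to get $e_j\equiv e_i$ up to relabelling, and conclude via the sweeping argument ($E_i\cdot e_j<0$ with the curves $e_j$ dominating $E_j$ forces $E_j\subseteq E_i$), which is exactly the paper's final contradiction. However, your middle step has a genuine gap: you prove that $F_y$ has anticanonical degree $2$ by invoking miracle flatness, which needs $X$ Cohen--Macaulay and $Y$ smooth at $y$, and neither is available in the setting of Lemma~\ref{klt}. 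There $X$ is only a Mori dream space with $K_X$ Cartier outside a set of codimension $\geq 4$; Mori dream spaces need not be Cohen--Macaulay, and the smoothness-in-codimension-$2$ results you cite from \S\ref{secsingularities} (Prop.~\ref{sing}, via Lemma~\ref{cod3}) assume $X$ \emph{smooth} with a klt boundary $\Delta$ and $f$ a $(K+\Delta)$-negative contraction --- hypotheses absent from Lemma~\ref{klt}. This is not a vacuous objection: the paper applies Lemma~\ref{gor} in the proof of Prop.~\ref{reldim1} to a Gorenstein canonical, possibly non-smooth $X$ (there $X$ is Cohen--Macaulay, since canonical singularities are rational, but smoothness of $Y$ at $y$ is still unknown), so your flatness argument would not cover that application. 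A smaller imprecision: that $W$ has codimension $2$ in $Y$ does not follow merely from $B_i,B_j$ being distinct prime divisors; one needs them $\Q$-Cartier, i.e.\ the $\Q$-factoriality of $Y$ from Rem.~\ref{Qfact}, which is what the paper cites.

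The gap is repairable without any flatness, and this is essentially how the paper's (terser) proof goes: work with cycle-theoretic limits rather than scheme-theoretic fibres. Since $f$ is special, it is equidimensional with one-dimensional fibres over a neighbourhood of the general point $y\in W$; hence for a general curve $\gamma\subset Y$ through $y$, every component of $f^{-1}(\gamma)$ is a surface dominating $\gamma$ (a component mapping to $y$ would give a $2$-dimensional fibre), so the limit at $y$ of the general fibres over $\gamma$, taken in the Chow variety, is an effective $1$-cycle of anticanonical degree $2$ whose support is \emph{all} of $F_y$. Combined with your (correct) observation that $K_X$ is Cartier along $F_y$ --- the precise point where $\codim T\geq 4$ enters --- and with the $K$-negativity of $f$, this shows $F_y$ has at most two components, each of degree $1$: this is the paper's ``$F_W=C+C'$ with $C,C'$ integral of anticanonical degree $1$''. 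From that point on, your degeneration of the fibres over $B_i$ and $B_j$, the matching of components by numerical class, and the final sweeping contradiction agree with the paper's proof.
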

\begin{proof}
By contradiction, suppose that $B_1\cap B_2\neq\emptyset$. Then $B_1\cap B_2$ has pure dimension $\dim X-3$, because $Y$ is $\Q$-factorial (see Rem.~\ref{Qfact}); let $W$ be an irreducible component. Since $f$ is special, the general fiber $F_W$ of $f$ over $W$ is a curve. Moreover,  $F_W$ is contained in the open subset where $K_X$ is cartier, so that $F_W=C+C'$ with $C$ and $C'$ integral curves of anticanonical degree $1$.

By Lemma \ref{klt}, for $i=1,2$ the general fiber $F_i$ of $f$ over $B_i$ is $e_i+\hat{e}_i$, with $-K_X\cdot e_i=1$, and $F_i$ degenerates to $F_W$. Thus, up to switching the components, we can assume that both $e_1$ and $e_2$ are numerically equivalent to $C$,  which implies that $e_1\equiv e_2$. This is impossible, because $E_1\neq E_2$, $E_i\cdot e_i<0$, and $e_i$ moves in a family of curves dominating $E_i$, for $i=1,2$.
\end{proof}
\begin{proof}[Proof of Prop.~\ref{reldim1}]
This the same as 
 the proof of \cite[Th.~1.1 and 1.3]{eleonora}, so we give only a sketch. We have $\rho_X=\rho_Y+1+m$ by Lemma \ref{klt}. As in \cite[Lemmas 3.9 and 3.10]{eleonora}, using Lemma \ref{gor}, one sees that if $m\geq 2$, 
then $\delta_X\geq m$. Hence the statement follows from Th.~\ref{trento}.
\end{proof}
\section{Preliminary results on Fano $4$-folds}\label{sanfrancisco}
\noindent From now on, we focus on smooth Fano $4$-folds. After giving in \S \ref{birat} some preliminary results on rational contractions of Fano $4$-folds, in \S \ref{secfixed} we recall the classification of fixed prime divisors in a Fano $4$-fold $X$ with $\rho_X\geq 7$, and report some properties that will be crucial 
in the sequel.
 Then in \S \ref{fabri} we apply the previous results to study special rational contractions of fiber type of $X$, when $\rho_X\geq 7$.
\subsection{Rational contractions of Fano $4$-folds}\label{birat}
\begin{lemma}[\cite{eff}, Rem.~3.6 and its proof]\label{basic1}
Let $X$ be a smooth Fano 
 $4$-fold and $\ph\colon X\dasharrow \w{X}$ a SQM.
\begin{enumerate}[$(a)$]
\item
 $\w{X}$ is smooth, the indeterminacy locus of $\ph$ is a disjoint union of exceptional planes (see \S \ref{terminology}), and the indeterminacy locus of
$\ph^{-1}$ is a disjoint union of exceptional lines;
\item an exceptional line in $\w{X}$ cannot meet any integral curve of anticanonical degree $1$, in particular it cannot meet an exceptional plane;
\item
let  $\psi\colon\w{X}\dasharrow\wi{X}$ be a SQM that factors as a sequence of $K$-negative flips. Then the indeterminacy locus of $\psi$ (respectively, $\psi^{-1}$) is a disjoint union of exceptional planes (respectively, lines).
\end{enumerate}
\end{lemma}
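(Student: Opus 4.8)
The plan is to reduce the whole statement to a single known structural fact about flips of smooth $4$-folds, together with the intersection-theoretic statement $(b)$, which I would prove first. The structural input I would use is the well-known description of a $K$-negative flip of a smooth $4$-fold: if $V$ is smooth and $V\dasharrow V'$ is the flip of a small, elementary, $K$-negative contraction, then the flipping locus in $V$ is a disjoint union of exceptional planes, $V'$ is again smooth, and the flipped locus in $V'$ is a disjoint union of exceptional lines. I treat this as known; everything below reduces the Lemma to it together with $(b)$.

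\emph{Part $(b)$.} By adjunction $-K_{\w X}\cdot\ell=-1$ for an exceptional line $\ell$, so any integral $C$ with $-K\cdot C=1$ already satisfies $C\neq\ell$. Suppose $C\cap\ell\neq\emptyset$. I would blow up $\ell$ to get $\pi\colon W\to\w X$; since $N_{\ell/\w X}\cong\ol_{\pr^1}(-1)^{\oplus 3}$, the exceptional divisor is $E\cong\pr^1\times\pr^2$, and contracting $E$ along its other ruling yields the smooth model $V$ of the flip of $\ell$, containing an exceptional plane $L\cong\pr^2$ with $N_{L/V}\cong\ol_{\pr^2}(-1)^{\oplus 2}$, with $W=\Bl_L V$. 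Comparing the two discrepancies (codimensions $3$ and $2$) gives $K_W=\pi^*K_{\w X}+2E=\pi_V^*K_V+E$. For the strict transform $\tilde C\subset W$ of $C$, with image $C_V\subset V$, one has $E\cdot\tilde C\geq 1$ because $C$ meets $\ell$ and $C\not\subset\ell$; hence
$$-K_V\cdot C_V=-K_{\w X}\cdot C-E\cdot\tilde C=1-E\cdot\tilde C\leq 0.$$
But near $L$ the divisor $-K_V$ is the pull-back of $\ol_{\pr^2}(1)$ under $V\to\pr^2$, and $C_V$ meets $L$ without being contained in it, so $C_V$ dominates a curve of positive degree in $\pr^2$ and $-K_V\cdot C_V\geq 1$ — a contradiction. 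The final clause is immediate, since a line in an exceptional plane has anticanonical degree $1$.

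\emph{Part $(a)$.} Here the Fano hypothesis enters through the reduction that $\varphi$ factors as a composition of \emph{$K$-negative} flips. I would fix an ample class on $\w X$ and let $D\in\Mov(X)$ be its transform, in the interior of the Mori chamber of $\w X$; since $-K_X$ is ample it lies in the interior of $\Nef(X)$, and I would follow a general segment from $-K_X$ to $D$. It crosses the walls of the chamber decomposition one at a time, each crossing being a single flip between consecutive SQMs, the composite being $\varphi$. If $C$ spans the ray contracted at a wall $t_0\in(0,1)$, then $\big((1-t_0)(-K_X)+t_0 D\big)\cdot C=0$, while the crossing direction forces $(D-(-K_X))\cdot C<0$; the two relations force $-K_X\cdot C>0$, so the flip is $K$-negative. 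The single-flip input then applies along the whole chain: every intermediate model, and in particular $\w X$, is smooth, and each step produces a flipping locus of exceptional planes and a flipped locus of exceptional lines. Finally, by $(b)$ the exceptional lines created at one step are disjoint from the exceptional planes flipped at all later steps (whose lines have degree $1$), so all these loci are carried isomorphically back to $X$, respectively forward to $\w X$, and stay pairwise disjoint; their unions are the indeterminacy loci of $\varphi$ and of $\varphi^{-1}$.

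\emph{Part $(c)$.} This is the same assembly as in $(a)$, except that the $K$-negativity of each flip is now part of the hypothesis, so the segment argument is unnecessary: $\w X$ is smooth by $(a)$, smoothness propagates through the chain by the single-flip input, and $(b)$ again keeps the successive loci disjoint, so the indeterminacy loci of $\psi$ and $\psi^{-1}$ are disjoint unions of exceptional planes and of exceptional lines. The substantive step is $(b)$: it is the only place requiring a genuine computation — tracking the change of the numerical class of a curve across the flip — and it is what forbids any interaction between the very negative exceptional lines and the anticanonical-degree-$1$ curves making up the exceptional planes. Granting $(b)$ and the single-flip description, parts $(a)$ and $(c)$ are bookkeeping, the Fano condition intervening only to produce the factorization into $K$-negative flips.
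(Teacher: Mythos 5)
There is a genuine gap, and it sits exactly at the step you yourself single out as the substantive one: the concluding contradiction in part $(b)$. Your computation up to $-K_V\cdot C_V=1-E\cdot\tilde C\leq 0$ is correct (as are the two discrepancy formulas and $E\cdot\tilde C\geq 1$), but the claimed lower bound $-K_V\cdot C_V\geq 1$ does not follow. There is no morphism $V\to\pr^2$: the projection to $\pr^2$ exists only on an analytic neighborhood $U$ of the plane $L$, modeled on $\operatorname{Tot}(\ol_{\pr^2}(-1)^{\oplus 2})$, and $C_V$ is never contained in $U$ --- in that local model every complete curve lies in $L$, whereas $C_V\not\subset L$. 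So ``$C_V$ dominates a curve of positive degree in $\pr^2$'' is not meaningful, and positivity of $-K_V$ \emph{near} $L$ gives no control on the global degree $-K_V\cdot C_V$, because $-K_V$ is not nef (it has degree $-1$ on every exceptional line of $V$, for instance). A telltale sign is that your argument for $(b)$ never uses that $\w{X}$ is an SQM of a Fano $4$-fold; but $(b)$ is a genuinely global statement: after your antiflip it becomes the assertion that no curve of anticanonical degree $\leq 0$ meets the plane $L$, for which there is no local obstruction and which one cannot expect on an arbitrary smooth $4$-fold. (A smaller issue: the projective contraction $W\to V$ of $E$ along the second ruling is not automatic either; it can be performed analytically, and that part could be patched --- the local-positivity step cannot.)

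For comparison: the paper does not prove this lemma but quotes it from \cite{eff}, Rem.~3.6, where the mechanism is precisely the degree-drop formula you derived, $-K_{Z'}\cdot C'=-K_Z\cdot C+E\cdot\tilde C$ across a single flip, but applied \emph{inductively along the whole factorization} of $\ph$ into $K$-negative flips: smoothness, the plane/line description, the pairwise disjointness, and the non-meeting statement $(b)$ are established simultaneously by induction on the number of flips, transporting a putative degree-one curve meeting an exceptional line back through the chain until the ampleness of $-K_X$ on the Fano end yields the contradiction. Your parts $(a)$ and $(c)$ are essentially right --- the wall-crossing argument that each flip in the factorization is $K$-negative is correct (modulo choosing the segment generically), and given Kawamata's theorem the assembly via $(b)$ is the standard bookkeeping --- but since that bookkeeping invokes $(b)$ on the intermediate models, the gap in $(b)$ propagates to them. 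The fix is structural: $(b)$ cannot be quarantined as a one-flip local computation; it must be run inside the induction, where the Fano hypothesis enters.
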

\begin{lemma}[\cite{eff}, Rem.~3.7]\label{basic2}
Let $X$ be a smooth Fano 
 $4$-fold and $f\colon X\dasharrow Y$ a rational contraction. Then 
one can factor $f$ as $X\stackrel{\ph}{\dasharrow}X'\stackrel{f'}{\to} Y$, where $\ph$ is a SQM, $X'$ is smooth, and $f'$ is a $K$-negative contraction.
\end{lemma}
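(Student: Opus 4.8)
The plan is to start from the factorization provided by the definition of a rational contraction and then to improve the small modification so that the resulting morphism becomes $K$-negative. By definition we may write $f=f'\circ\ph$ with $\ph\colon X\dashrightarrow X'$ an SQM and $f'\colon X'\to Y$ a contraction. First I would observe that, since $X$ is a smooth Fano $4$-fold, $X$ is a Mori dream space and $\ph$ is an SQM of $X$; hence Lemma~\ref{basic1}$(a)$ applies and shows that $X'$ is smooth. Moreover $-K_{X'}=\ph_*(-K_X)$ is big and movable, because $-K_X$ is ample and $\ph$ is an isomorphism in codimension one; in particular $[-K_{X'}]$ lies in the interior of $\Mov(X)$, which has a polyhedral chamber decomposition into the nef cones of the SQMs of $X$.

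The key point is to achieve $K$-negativity without changing $Y$. Fix a general ample divisor $A$ on $Y$ and set $L:=f'^*A$; then $L$ is nef on $X'$ and $[L]$ lies in the relative interior of the face $C_Y:=f'^*\Nef(Y)$ of $\Nef(X')$, which defines the contraction $f'$. I would then consider the classes $[-K_{X'}+mL]$ for $m\gg 0$. Since $-K_{X'}$ is big and movable and $L$ is nef, each such class is again big and movable, hence lies in the interior of $\Mov(X)$; for general $A$ and $m$ large it lies in the interior of the nef cone of a suitable SQM $\psi\colon X'\dashrightarrow X''$, so that $-K_{X'}+mL$ is ample on $X''$. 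As $m\to\infty$ the direction of these classes tends to $[L]$, so $[L]\in\overline{\Nef(X'')}$, and by the general position of $L$ the minimal face of $\overline{\Nef(X'')}$ containing $[L]$ is exactly $C_Y$. Hence the transform $L_{X''}=\psi_*L$ is semiample and defines a contraction $f''\colon X''\to Y$ with $f''\circ\psi=f'$; consequently $f''\circ(\psi\circ\ph)=f$, where $\psi\circ\ph\colon X\dashrightarrow X''$ is a composite of SQMs, hence an SQM.

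It remains to check that $f''$ is $K$-negative, and this is where the construction pays off. For every nonzero $C\in\NE(X''/Y)$ we have $L_{X''}\cdot C=0$, since $L_{X''}$ is pulled back from $Y$; on the other hand $-K_{X''}+mL_{X''}$ is ample, so $(-K_{X''}+mL_{X''})\cdot C>0$, and therefore $-K_{X''}\cdot C=(-K_{X''}+mL_{X''})\cdot C>0$. As the relative cone of curves is closed and polyhedral, this shows that $-K_{X''}$ is $f''$-ample, i.e.\ $f''$ is a $K$-negative contraction; finally $X''$ is smooth by Lemma~\ref{basic1}$(a)$, being an SQM of the Fano $4$-fold $X$. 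The main obstacle in this argument is the control of the chamber of $\Mov(X)$ containing $[-K_{X'}+mL]$ for large $m$: one has to ensure, using the general position of $A$ and the polyhedral structure of the movable-cone decomposition of the Mori dream space $X$, that this chamber has $C_Y$ as a face with $[L]$ in its relative interior, so that the associated morphism has target exactly $Y$ rather than some intermediate birational model. Everything else is formal once this is established.
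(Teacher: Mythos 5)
Your skeleton is essentially the intended one: the paper gives no proof of Lemma~\ref{basic2} (it quotes \cite[Rem.~3.7]{eff}), and the argument there is, as in your proposal, to pull back a general ample divisor $A$, perturb by $-K$, and land in a chamber of the movable cone. Two remarks before the main point. First, a small slip: ``big and movable, hence in the interior of $\Mov(X)$'' is false in general --- the pullback of an ample divisor under an elementary divisorial contraction is big and semiample, yet lies on $\partial\Mov(X)$. Your conclusion is nevertheless correct, because under the identification $\Nu(X)\cong\Nu(X')$ given by the SQM one has $[-K_{X'}]\in\operatorname{int}\Nef(X)\subseteq\operatorname{int}\Mov(X)$, and an interior point plus any element of the cone stays interior; for the same reason the identification of the target of $f''$ with $Y$ is cleaner via section rings ($H^0(X'',mL_{X''})=H^0(X',mL)=H^0(Y,mA)$, so the contraction defined by the semiample divisor $L_{X''}$ has target $\operatorname{Proj}$ of this ring, namely $Y$), with no general-position argument needed.

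The genuine gap is the step you yourself flag: that $[-K_{X'}+mL]$ lies in the \emph{interior} of the nef chamber of some SQM for $m\gg0$, which is exactly what makes your final inequality $-K_{X''}\cdot C>0$ strict rather than $\geq 0$. Your proposed mechanism --- generality of $A$ plus polyhedrality --- cannot close it on its own. Varying $A$ only moves $[L]$ within the relative interior of the fixed face $C_Y$; so if the entire face $C_Y$, together with $[-K]$, lies in the hyperplane spanned by an internal wall of the chamber decomposition (equivalently, if the relevant models carry a $K$-trivial flipping curve contracted over $Y$), then the ray $\{[-K]+m[L]\}_{m\geq 0}$ stays in that wall no matter how general $A$ is, the perturbed class is never ample on any SQM, and $K$-negativity fails to be strict. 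Abstract convex geometry does not exclude this; what excludes it for Fano $4$-folds is the input the paper itself uses in \ref{induction}: by \cite[Rem.~3.6(2)]{eff}, every small extremal ray $R$ on an SQM of $X$ with $-K\cdot R\leq 0$ contains the class of an exceptional line $\ell$, for which $-K\cdot\ell=-1$; hence no internal (flipping) wall of $\Mov(X)$ is $K$-trivial. A wall containing the ray would be orthogonal to its flipping class $\gamma$ and would force $-K\cdot\gamma=0$ and $L\cdot\gamma=0$ simultaneously, a contradiction. With this ingredient the ray eventually enters the interior of a chamber and the rest of your argument (smoothness of $X''$ via Lemma~\ref{basic1}$(a)$, relative ampleness of $-K_{X''}$) goes through verbatim; alternatively one can run an MMP for $L$ with scaling of $-K_X$ and invoke the same remark to get strictness. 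So: correct approach, but the crucial step requires this Fano-$4$-fold-specific fact, not the generality of $A$.
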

These results allow to conclude that the target of a special rational contraction of a Fano $4$-fold has mild singularities.
\begin{lemma}\label{2}
Let $X$ be a smooth Fano $4$-fold and $f\colon X\dasharrow Y$ a special rational contraction. If $\dim Y=2$, then $Y$ is smooth. If $\dim Y=3$, then  $Y$ has isolated locally factorial, canonical singularities. 
\end{lemma}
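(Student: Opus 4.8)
The plan is to reduce at once to the regular, $K$-negative setting handled by Prop.~\ref{sing}, and then to read off the two cases from a bound on the dimension of $\Sing Y$.

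First I would apply Lemma~\ref{basic2} to the rational contraction $f$, obtaining a factorization $X\stackrel{\ph}{\dasharrow}X'\stackrel{f'}{\to}Y$ in which $\ph$ is a SQM, $X'$ is a smooth $4$-fold, and $f':=f\circ\ph^{-1}$ is a $K$-negative (regular) contraction. Since $f$ is a \emph{special} rational contraction, Rem.~\ref{Qfactrat} guarantees that $f'$ is special as well. Moreover $f'$ is of fiber type: a special rational contraction is of fiber type by definition, so $\dim Y<4=\dim X'$.

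Next I would invoke Prop.~\ref{sing}, applied to $X'$ with the trivial boundary $\Delta=0$. As $X'$ is smooth, the pair $(X',0)$ is klt, and by the previous step $f'$ is a $(K+0)$-negative special contraction of fiber type. Hence Prop.~\ref{sing} yields that $Y$ has locally factorial, canonical singularities and is nonsingular in codimension $2$, i.e.\ $\codim\Sing Y\geq 3$.

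Finally I would distinguish the two cases by dimension. If $\dim Y=2$, then $\codim\Sing Y\geq 3$ is only possible when $\Sing Y=\emptyset$, so $Y$ is smooth. If $\dim Y=3$, then $\codim\Sing Y\geq 3$ forces $\dim\Sing Y\leq 0$, so the (locally factorial, canonical) singularities of $Y$ are isolated. I do not expect any genuine obstacle here: all the geometric content is already packaged into Prop.~\ref{sing} and its supporting lemmas, and the only point requiring a little care is checking that the factorization produced by Lemma~\ref{basic2} satisfies every hypothesis of Prop.~\ref{sing}---smoothness of $X'$, kltness of $(X',0)$, and the $K$-negativity and specialness of $f'$.
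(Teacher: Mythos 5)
Your proposal is correct and is essentially the paper's own proof: factor $f$ via Lemma~\ref{basic2} into a SQM followed by a regular, $K$-negative contraction $f'$, which is special by Rem.~\ref{Qfactrat}, and then apply Prop.~\ref{sing} with $\Delta=0$. Your explicit final step --- reading off smoothness (for $\dim Y=2$) and isolatedness of the singularities (for $\dim Y=3$) from $\codim\Sing Y\geq 3$ --- is exactly what the paper leaves implicit.
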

\begin{proof}
By Lemma \ref{basic2} we can factor $f$ as $X\stackrel{\ph}{\dasharrow} X'\stackrel{f'}{\to} Y$ where $\ph$ is a SQM, $X'$ is smooth, and $f'$ is regular, $K$-negative, and special. Then the statement follows from Prop.~\ref{sing}.
\end{proof}
\subsection{Fixed prime divisors in Fano $4$-folds with $\rho\geq 7$}\label{secfixed}
\noindent Let $X$ be a Fano 
 $4$-fold with $\rho_X\geq 7$. 
Fixed prime divisors in $X$ have been classified in \cite{eff,blowup} in four types, and have many properties; this explicit information on the geometry of fixed divisors is a key ingredient in the proof of Th.~\ref{main}.
 In this subsection we recall this classification, and show some properties that will be used in the sequel.
\begin{thmdefi}[\cite{blowup}, Th.~5.1, Def.~5.3, Cor.~5.26, Def.~5.27]\label{long}
Let $X$ be a smooth Fano $4$-fold with $\rho_X\geq 7$, or $\rho_X=6$ and $\delta_X\leq 2$, and $D$ a fixed prime divisor in $X$. The following holds.
\begin{enumerate}[$(a)$]
\item Given a SQM $X\dasharrow X'$ and an elementary divisorial contraction $k\colon X'\to Y$ with $\Exc(k)$ the transform of $D$, then $k$ 
is of type $(3,0)^{sm}$, $(3,0)^Q$, $(3,1)^{sm}$, or $(3,2)$.
\item The type of $k$  depends only on $D$, so we 
define $D$ to be of type $(3,0)^{sm}$, $(3,0)^Q$, $(3,1)^{sm}$, or $(3,2)$, respectively.
\item If $D$ is of type $(3,2)$, then $D$ is the exceptional divisor of an elementary divisorial contraction of $X$, of type $(3,2)$.
\item We define $C_D\subset D\subset X$ to be the transform of a general irreducible curve $\Gamma\subset X'$ contracted by $k$, of minimal anticanonical degree; the curve $C_D$ depends only on $D$. 
\item  $C_D\cong\pr^1$, $D\cdot C_D=-1$, $C_D$ is contained in the open subset where the birational map $X\dasharrow X'$ is an isomorphism, and $C_D$ moves in a family of curves dominating $D$.
\item Let $\ph\colon X\dasharrow \w{X}$ be a SQM, and  $E$ a fixed prime divisor in $\w{X}$. We define the type of $E$ to be the type of its transform in $X$.
\end{enumerate}
 \end{thmdefi}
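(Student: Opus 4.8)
The statement gathers several results of \cite{blowup}, and the natural plan is to reduce every item to the classification of $K$-negative elementary divisorial contractions of \emph{smooth} $4$-folds, and then to use that $X$ is Fano with $\rho_X$ large in order to discard the unwanted cases.

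The first point is to produce the curve $C_D$ and to observe that $k$ is forced to be $K$-negative; this underlies everything else. By Lemma \ref{basic1}$(a)$, $X'$ is smooth and the indeterminacy locus of $X'\dasharrow X$ is a disjoint union of finitely many exceptional lines. The curves contracted by $k$ cover $\Exc(k)=D'$ (the transform of $D$), so a general minimal-degree contracted curve $\Gamma$ moves in a family dominating the $3$-fold $D'$ and therefore avoids those finitely many lines; hence its transform $C_D\subset X$ lies in the open locus where the SQM is an isomorphism, which is the corresponding assertion of $(e)$. Because the SQM preserves anticanonical degrees on this locus and $X$ is Fano, $-K_{X'}\cdot\Gamma=-K_X\cdot C_D\geq 1$; as $\Gamma$ generates the extremal ray contracted by $k$, this shows that $k$ is automatically $K$-negative.

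Now $k$ is a $K$-negative elementary divisorial contraction of a smooth $4$-fold with irreducible exceptional divisor $D'$, so it has type $(3,b)$ with $b\in\{0,1,2\}$, and the standard classification of such contractions (as recalled and used in \cite{blowup}) produces a short list of local models: for $b=0$, $D'$ is a $\pr^3$ or a smooth quadric threefold with a negative normal bundle; for $b=1$ the blow-up of a smooth curve; for $b=2$ a contraction onto a surface. The real work, which I expect to be the main obstacle, is to eliminate the remaining models of the list — for instance $D'\cong\pr^3$ with $\ol_{\pr^3}(-2)$, which would contract $D'$ to a non-factorial terminal point of $Y$: this is carried out by a case analysis using the Fano condition, the bound $\delta_X\leq 8$ of Theorem \ref{trento}, and the control on SQMs from Lemma \ref{basic1}. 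What survives is exactly the four types of $(a)$, and a uniform feature of all four surviving models is that the minimal contracted curve satisfies $\Gamma\cong\pr^1$ and $D'\cdot\Gamma=-1$; transporting this through the SQM yields $C_D\cong\pr^1$ and $D\cdot C_D=-1$, completing $(d)$ and $(e)$.

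It remains to make the type intrinsic and to treat $(c)$ and $(f)$. Since $C_D$ lies in the isomorphism locus of the SQM and dominates $D$, both the anticanonical degree $-K_X\cdot C_D\in\{1,2,3\}$ and the dimension of the subvariety of $D$ chain-connected to a general point by deformations of $C_D$ (which equals $3-b$) are computed entirely inside $X$; the four types realise the four distinct pairs $(1,1),(2,2),(2,3),(3,3)$, so the type depends only on $D$, giving $(b)$ and the well-definedness in $(d)$. For $(c)$, type $(3,2)$ is precisely the case $-K_X\cdot C_D=1$; since $-K_X$ is ample this is the minimal possible anticanonical degree, so $[C_D]$ cannot be a nontrivial sum of classes of effective curves and hence spans a $K$-negative extremal ray of $\NE(X)$, whose contraction is elementary, contracts $D$ (because $D\cdot C_D<0$ and the curves $C_D$ cover $D$), and is of type $(3,2)$ — so no SQM is needed. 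Finally $(f)$ is immediate: the transform of a fixed prime divisor $E\subset\w{X}$ under the SQM is again a fixed prime divisor of $X$, so $(b)$ makes its type well defined.
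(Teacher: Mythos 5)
First, a point of comparison: this paper contains no proof of Th.-Def.~\ref{long} at all --- the statement is imported verbatim from \cite{blowup} (Th.~5.1, Def.~5.3, Cor.~5.26, Def.~5.27), where its proof occupies a substantial part of that paper. So your proposal must be measured against those proofs, and it has genuine gaps rather than being a compressed version of them. The decisive gap is in $(a)$: the whole difficulty of \cite[Th.~5.1]{blowup} is to exclude the other local models allowed by the classification of $K$-negative elementary divisorial contractions of smooth $4$-folds (e.g.\ $\Exc(k)\cong\pr^3$ with normal bundle $\ol_{\pr^3}(-2)$, quadric cones, singular or non-reduced structures in types $(3,0)$ and $(3,1)$), and your sketch simply asserts that this ``is carried out by a case analysis using the Fano condition, the bound $\delta_X\leq 8$, and the control on SQMs''. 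Nothing in your argument ever brings the hypothesis $\rho_X\geq 7$ (or $\rho_X=6$ and $\delta_X\leq 2$) to bear, yet the statement is false without it: for small Picard number such contractions with other exceptional divisors do occur, which is exactly why \cite{blowup} needs the large-$\rho$ hypothesis. So the core of $(a)$ --- and with it the basis for $(b)$, $(d)$, $(e)$, which you derive from the resulting short list --- is assumed rather than proved.

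Two further steps are flawed as written. In $(c)$ you argue that since $-K_X\cdot C_D=1$ is the minimal anticanonical degree, $[C_D]$ ``cannot be a nontrivial sum of classes of effective curves and hence spans a $K$-negative extremal ray of $\NE(X)$''. This is not valid: decompositions in $\NE(X)$ carry arbitrary nonnegative real coefficients, so for instance $[C_D]=\frac{1}{2}\bigl([\Gamma_1]+[\Gamma_2]\bigr)$ with $-K_X\cdot\Gamma_i=1$ is not excluded, and minimality of the degree does not imply extremality of the class; the statement corresponding to $(c)$ is obtained in \cite{blowup} by cone-theoretic arguments of the kind quoted in this paper as \cite[Lemma 5.29]{blowup}, not by a degree count. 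Separately, at the very start you claim that a general minimal contracted curve $\Gamma$ avoids the indeterminacy lines of $X'\dasharrow X$ ``because it moves in a family dominating $D'$'': a dominating family of curves can perfectly well have every member meeting a fixed curve (think of lines through a point in $\pr^3$), so this step requires the fiber structure of $k$ restricted to $\Exc(k)$ --- which, before $K$-negativity and the classification are established, is not yet available, making the argument mildly circular. The invariant-pair idea you use for $(b)$, distinguishing the four types by $(-K_X\cdot C_D,\,3-b)\in\{(1,1),(2,2),(2,3),(3,3)\}$, is sound in spirit, but everything rests on $(a)$; to become a proof, the proposal would essentially have to reproduce the arguments of \cite{blowup}.
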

\noindent We will frequently use the notation $C_D\subset D$ introduced in the Theorem - Definition above.

The next property of fixed divisors of type $(3,2)$  will be crucial in the sequel.
\begin{lemma}\label{prop32}
Let $X$ be a smooth Fano $4$-fold with $\rho_X\geq 7$, or $\rho_X=6$ and $\delta_X\leq 2$, $X\dasharrow\w{X}$ a SQM, and 
$D\subset \w{X}$ a fixed divisor of type $(3,2)$.
If $\N(D,\w{X})\subsetneq\N(\w{X})$, then either $\rho_X\leq 12$, or $X$ is a product of surfaces.
\end{lemma}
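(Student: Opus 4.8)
The plan is to transfer everything back to $X$ itself and then feed the Lefschetz defect into Theorem~\ref{buonconsiglio}. First I would replace $D\subset\w{X}$ by its transform $D_X\subset X$, which is again a fixed divisor of type $(3,2)$; by Theorem--Definition~\ref{long}$(c)$ there is an elementary divisorial contraction $\sigma\colon X\to X_0$ of type $(3,2)$ with $\Exc(\sigma)=D_X$, and $S_0:=\sigma(D_X)$ is a surface. The guiding observation is that, by Theorem~\ref{buonconsiglio}, it is enough to prove that either $X$ is a product of surfaces or $\delta_X\geq 2$: indeed, if $\delta_X\geq 2$ and $X$ is not a product, then we are in case $(ii)$ or $(iii)$, so $\rho_X\leq 12$, which is the assertion. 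A bound $\delta_X\geq 1$ alone cannot suffice, since case $(iv)$ is consistent with it and allows $\rho_X$ arbitrarily large.

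The second step is to identify $\N(D_X,X)$ with $\N(D,\w{X})$ under the canonical isomorphism $\N(X)\cong\N(\w{X})$ induced by the SQM, so that the hypothesis $\N(D,\w{X})\subsetneq\N(\w{X})$ becomes $\codim\N(D_X,X)\geq 1$. Here I would use the SQM structure of Lemma~\ref{basic1}: the map $\w{X}\dashrightarrow X$ is an isomorphism away from finitely many exceptional lines and planes, the change of class of a strict transform is a multiple of a flipped class, and those flipped loci relevant to $D$ are carried into $D_X$, so the strict transform matches the spanning sets of curve classes on both sides. Pushing forward along $\sigma_*$, whose kernel is the line spanned by the fibre class $[C_{D_X}]\in\N(D_X,X)$, then gives the identity $\codim_X\N(D_X,X)=\codim_{X_0}\N(S_0,X_0)$, so in particular $\codim_{X_0}\N(S_0,X_0)\geq 1$ and $\delta_X\geq 1$.

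The heart of the argument, and the step I expect to be the main obstacle, is upgrading $\codim_{X_0}\N(S_0,X_0)\geq 1$ to $\delta_X\geq 2$ (or to a product decomposition), which must genuinely use both the conic-bundle geometry of $\sigma_{|D_X}\colon D_X\to S_0$ and the fact that $X$ is Fano. My plan is to exhibit a \emph{second} prime divisor of $X$ whose class space has codimension at least $2$. Concretely, starting from a nonzero class $\eta\in\Nu(X_0)$ orthogonal to every curve in $S_0$, I would seek a prime divisor $G_0\subset X_0$ disjoint from $S_0$; its transform $G\subset X$ is then disjoint from $D_X$, so by Remark~\ref{caffe} the fibre class $[C_{D_X}]$ lies outside $\N(G,X)$, and comparison along $\sigma$ yields $\codim_X\N(G,X)=\codim_{X_0}\N(G_0,X_0)+1$. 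If $G_0$ can be chosen with $\codim_{X_0}\N(G_0,X_0)\geq 1$, this produces $\delta_X\geq 2$. As an alternative route, and in the spirit of Proposition~\ref{reldim1} together with Lemmas~\ref{klt}--\ref{gor}, I would analyse the degenerate fibres of $D_X\to S_0$ over curves in $S_0$ to manufacture pairwise disjoint divisors in $X$ and bound $\delta_X$ from below directly; should this analysis instead force $\delta_X\geq 4$, the second part of Theorem~\ref{trento} gives a product structure, and we again land in the desired dichotomy.

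The delicate point throughout is the disjointness: guaranteeing that $G_0$ (or the degenerate-fibre components) can be taken to miss $S_0$. This is where the positivity of $-K_X$ has to enter, constraining the pair $(X_0,S_0)$ beyond the bare codimension-one condition, and it is precisely the input that rules out the large-$\rho_X$ part of case $(iv)$ of Theorem~\ref{buonconsiglio}.
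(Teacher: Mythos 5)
Your opening moves match the paper's: transfer $D$ to its transform $D_X\subset X$, use Th.--Def.~\ref{long}$(c)$ to realize $D_X$ as the exceptional divisor of an elementary divisorial contraction of type $(3,2)$, identify $\dim\N(D_X,X)=\dim\N(D,\w{X})$, and dispose of the case $\delta_X\geq 2$ via Th.~\ref{buonconsiglio}. Two caveats there. First, the identity $\dim\N(D_X,X)=\dim\N(D,\w{X})$ is not a formal consequence of the SQM structure: it can fail for divisors containing exceptional planes, and the paper's proof hinges on the specific fact that an exceptional divisor of type $(3,2)$ contains no exceptional planes (\cite[Rem.~2.17(2)]{blowup}), which then allows \cite[Cor.~3.14]{eff} to apply; your sketch gestures at this but never isolates that input. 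Second, and more seriously, your reduction is to the \emph{strictly stronger} claim ``$\delta_X\geq 2$ or $X$ is a product,'' which the hypotheses do not imply and which the lemma does not assert: the conclusion $\rho_X\leq 12$ is perfectly compatible with $\delta_X=1$, and if the hypotheses really forced $\delta_X\geq 2$ the lemma would be an immediate corollary of Th.~\ref{buonconsiglio} with no further content.

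This matters because the entire substance of the lemma lies in the case $\delta_X\leq 1$, and there your argument is only a plan, as you yourself concede. The paper handles this case by invoking \cite[Prop.~5.32]{blowup}, which states directly that a Fano $4$-fold as in the hypotheses, with $\delta_X\leq 1$ and a fixed prime divisor of type $(3,2)$ whose curve space is proper in $\N(X)$, satisfies $\rho_X\leq 12$; that external result carries the real geometric work. Your substitute --- producing a prime divisor $G_0\subset X_0$ disjoint from $S_0=\sigma(D_X)$ with $\codim\N(G_0,X_0)\geq 1$, so that its transform $G$ gives $\codim\N(G,X)\geq 2$ --- is unproven at exactly the decisive point (the disjointness and the codimension condition on $G_0$), and the numerical bookkeeping you do get right ($\codim_X\N(D_X,X)=\codim_{X_0}\N(S_0,X_0)$, and the $+1$ shift for a divisor missing $\Exc(\sigma)$, as in Rem.~\ref{caffe}) only re-derives $\delta_X\geq 1$, which you correctly note is useless against case $(iv)$ of Th.~\ref{buonconsiglio}. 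The alternative route via degenerate fibers of $D_X\to S_0$ in the spirit of Lemmas~\ref{klt}--\ref{gor} is likewise not carried out, and those lemmas concern special contractions of $X$ itself of relative dimension one, not the restricted conic bundle $D_X\to S_0$, so they do not apply as stated. In short: the proof is incomplete where it counts, and the strategy of forcing $\delta_X\geq 2$ aims at a conclusion the lemma neither needs nor guarantees.
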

\begin{proof}
If $\delta_X\geq 2$, we have the statement by Th.~\ref{buonconsiglio}, so let us assume that $\delta_X\leq 1$.
Let $D_X$ be the transform of $D$ in $X$, so that $D_X$ is the exceptional divisor of an elementary divisorial contraction of $X$, of type $(3,2)$. By \cite[Rem.~2.17(2)]{blowup}, $D_X$ cannot contain exceptional planes, hence $\dim\N(D_X,X)=\dim\N({D},\w{X})$ by \cite[Cor.~3.14]{eff}. Then $\rho_X\leq 12$ by \cite[Prop.~5.32]{blowup}.
\end{proof}
\begin{lemma}\label{2faces}
Let $X$ be a smooth Fano $4$-fold with $\rho_X\geq 7$, or $\rho_X=6$ and $\delta_X\leq 2$, and let $D_1,D_2\subset X$ be two distinct fixed prime divisors.
We have the following:

\smallskip 

\begin{enumerate}[$(a)$]
\item
$
\dim\langle [D_1],[D_2]\rangle\cap\Mov(X)=\dim\langle [C_{D_1}],[C_{D_2}]\rangle\cap\mov(X)=$\\

$\qquad\qquad\qquad\qquad=\begin{cases}
0\quad\text{ if }\ D_1\cdot C_{D_2}=0\text{ or }D_2\cdot C_{D_1}=0;\\
1\quad\text{ if }\ D_1\cdot C_{D_2}=D_2\cdot C_{D_1}=1;\\
2\quad\text{ if }\ (D_1\cdot C_{D_2})(D_2\cdot C_{D_1})\geq 2.\end{cases}$

\smallskip

\item If $D_1\cdot C_{D_2}=D_2\cdot C_{D_1}=1$, then $\langle [D_1],[D_2]\rangle\cap\Mov(X)=\langle[D_1+D_2]\rangle$ and
$\langle [C_{D_1}],[C_{D_2}]\rangle\cap\mov(X)=\langle[C_{D_1}+C_{D_2}]\rangle$. Moreover $(D_1+D_2)\cdot (C_{D_1}+C_{D_2})=0$ and  $D_1+D_2$ is not big.

\smallskip

\item If $D_1\cdot C_{D_2}=0$ or $D_2\cdot C_{D_1}=0$, then $\langle [D_1],[D_2]\rangle$ is a face of $\Eff(X)$, and $\langle [C_{D_1}],[C_{D_2}]\rangle$ is a face of $\Mov(X)^{\vee}$.
\end{enumerate}
\end{lemma}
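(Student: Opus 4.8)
The plan is to reduce the whole statement to the $2\times2$ matrix of intersection numbers of the two divisors against the two curves, and to read everything off from two positivity principles together with the contractibility of fixed divisors.

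First I would set $a:=D_1\cdot C_{D_2}$ and $b:=D_2\cdot C_{D_1}$ and record the basic signs. By Th.-Def.~\ref{long} one has $D_i\cdot C_{D_i}=-1$, and since $C_{D_j}$ moves in a family dominating $D_j$ while $D_i\neq D_j$, the general $C_{D_j}$ is not contained in $D_i$; hence $a,b\geq0$. The classes $[D_1],[D_2]$ are independent, since a proportionality would be forced to be negative by the diagonal entries $-1$, contradicting effectivity, and likewise $[C_{D_1}],[C_{D_2}]$. Thus the matrix $M=(D_i\cdot C_{D_j})$, with diagonal entries $-1$ and off-diagonal entries $a$ and $b$, has $\det M=1-ab$, and the three alternatives of $(a)$ are exactly $ab=0$, $ab=1$ (so $a=b=1$), and $ab\geq2$.

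The two positivity principles I would use are: $(\mathrm{P}1)$ $M'\cdot C_{D_i}\geq0$ for every movable divisor $M'$, because the fixed divisor $D_i$ is not a component of a general member of $|mM'|$ whereas $C_{D_i}$ sweeps out $D_i$; and dually $(\mathrm{P}2)$ $D_i\cdot\gamma\geq0$ for every class $\gamma\in\mov(X)$, since a covering curve avoids the fixed divisor $D_i$. Feeding $(\mathrm{P}1)$ into $\langle[D_1],[D_2]\rangle$, a class $x[D_1]+y[D_2]$ with $x,y\geq0$ can lie in $\Mov(X)$ only if $-x+by\geq0$ and $ax-y\geq0$; an elementary computation shows that this region is $\{0\}$ when $ab=0$, the single ray $\langle[D_1+D_2]\rangle$ when $ab=1$, and two-dimensional when $ab\geq2$. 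The same computation with $(\mathrm{P}2)$ confines $\langle[C_{D_1}],[C_{D_2}]\rangle\cap\mov(X)$ to the transposed region, of the same dimension. This already gives the upper bounds in $(a)$ for both cones, and settles the case $ab=0$ exactly.

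It remains to attain these dimensions and to prove $(b)$ and $(c)$, and here I would invoke the contractibility of fixed divisors. By Th.-Def.~\ref{long} and Lemma~\ref{unosolo}, after a suitable SQM each $D_i$ is the exceptional divisor of an elementary divisorial contraction with fibre class $C_{D_i}$; pulling back divisors from the target shows that $C_{D_i}$ supports a facet of $\Mov(X)$, so that $[C_{D_i}]$ is an extremal ray of $\Mov(X)^{\vee}$. For $(c)$, say $a=0$: then the general $C_{D_2}$ is disjoint from $D_1$, so contracting $D_1$ leaves $D_2$ as a fixed divisor, which can be contracted in turn; this produces a birational contraction $X\dasharrow X'$ whose exceptional divisors are exactly $D_1$ and $D_2$. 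Cor.~\ref{face} (cf.\ \cite[Lemma~2.7]{okawa_MCD}) then gives that $\langle[D_1],[D_2]\rangle$ is a face of $\Eff(X)$ with $\langle[D_1],[D_2]\rangle\cap\Mov(X)=\{0\}$; moreover $C_{D_1}$ and $C_{D_2}$ support distinct facets of $\Mov(X)$ sharing the codimension-two face of classes pulled back from $X'$, so by polyhedral duality $\langle[C_{D_1}],[C_{D_2}]\rangle$ is a face of $\Mov(X)^{\vee}$ (the case $b=0$ being symmetric). For $(b)$, with $a=b=1$, the class $D_1+D_2$ is trivial on both $C_{D_1}$ and $C_{D_2}$ and satisfies $(D_1+D_2)\cdot C_{D_3}\geq0$ for every further fixed divisor $D_3$, again since $C_{D_3}$ sweeps out $D_3\neq D_1,D_2$; this is precisely the reducible-fibre configuration of Lemma~\ref{klt}, from which I would extract the relative-dimension-one fibre-type contraction for which $D_1+D_2$ is a pullback from the base and $C_{D_1}+C_{D_2}$ is the fibre class. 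Hence $D_1+D_2\in\Mov(X)$ and $C_{D_1}+C_{D_2}\in\mov(X)$, which with the upper bounds yields the generators in $(b)$; finally $(D_1+D_2)\cdot(C_{D_1}+C_{D_2})=-1+a+b-1=0$ is immediate, and pairing the effective class $D_1+D_2$ with the nonzero moving class $C_{D_1}+C_{D_2}$ forces $D_1+D_2$ to be non-big.

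The main obstacle is exactly the passage from the upper bounds of the third paragraph to genuine membership in $\Mov(X)$ and $\mov(X)$: a priori some third fixed divisor could shrink the admissible region. I expect to overcome this through the auxiliary contractions above — the sequential contraction of $D_1$ and $D_2$ in the case $ab=0$, and the conic-bundle-type contraction in the case $a=b=1$ — so that $D_1+D_2$ and $C_{D_1}+C_{D_2}$ are realized respectively as an honest pullback and an honest covering family. Verifying that $a=b=1$ does force the reducible-fibre geometry, and that the required contraction exists after the appropriate SQM, is where the argument concentrates, and there I would rely on the fine properties of the curves $C_D$ and of fixed divisors established in \cite{eff,blowup}.
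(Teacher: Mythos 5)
Your reduction to the $2\times2$ system of intersection numbers, with the upper bounds coming from your principles $(\mathrm{P}1)$ and $(\mathrm{P}2)$, is exactly the paper's starting point: your inequalities coincide with the system \eqref{system}. But the crux of the lemma is the reverse inclusion — that every nonnegative solution of that system actually lies in $\Mov(X)$, respectively $\mov(X)$ — and there your argument has a genuine gap. The paper gets the converse for free from the full strength of \cite[Lemma 5.29(2)]{blowup}: for these Fano $4$-folds a divisor $B$ is movable \emph{if and only if} $B\cdot C_D\geq 0$ for every fixed prime divisor $D$, and dually $\Mov(X)^{\vee}=\langle[C_D]\rangle_{D\,\text{fixed}}+\mov(X)$ and $\Eff(X)=\langle[D]\rangle_{D\,\text{fixed}}+\Mov(X)$; your $(\mathrm{P}1)$/$(\mathrm{P}2)$ are only the easy ``only if'' halves. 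Your substitute for the converse fails in two places. In case $a=b=1$ you verify precisely the hypothesis of that criterion, namely $(D_1+D_2)\cdot C_{D_3}\geq 0$ for every further fixed divisor $D_3$, but then try to conclude by ``extracting'' a fiber-type contraction of relative dimension one with $D_1+D_2$ pulled back from the base: this is circular, because producing such a fibration (e.g.\ by running an MMP for $D_1+D_2$) already presupposes that $D_1+D_2$ is movable and non-big, which is exactly what is to be proved; and Lemma~\ref{klt} describes the reducible fibers of a \emph{given} special contraction — it does not construct one from intersection-theoretic data. Worse, in case $(D_1\cdot C_{D_2})(D_2\cdot C_{D_1})\geq 2$ you never address attainment at all: showing the $2$-dimensional solution region lies in $\Mov(X)$ again requires the converse criterion, and no construction is offered there.

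Part $(c)$ of your proposal is closer to workable but still leaves unproven steps: that the image of $D_2$ on the target of the first contraction remains a fixed, contractible divisor is asserted without justification (Th.-Def.~\ref{long} applies to the Fano $X$, not to the intermediate target), and the polyhedral-duality step identifying $\langle[C_{D_1}],[C_{D_2}]\rangle$ as a face of $\Mov(X)^{\vee}$ via ``classes pulled back from $X'$'' is a sketch, not a proof. The paper avoids all geometry here: it proves the elementary convex-geometry Lemma~\ref{cone} and applies it twice — once with $\sigma=\Eff(X)$, $\alpha=[C_{D_2}]$ (using that $D\cdot C_{D_2}\geq 0$ for every prime divisor $D\neq D_2$), and once with $\sigma=\Mov(X)^{\vee}$, $\alpha=[D_1]$, together with \cite[Lemma 5.29(1),(2)]{blowup}. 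The efficient repair of your write-up is to import the full statement of \cite[Lemma 5.29(2)]{blowup}: then $(a)$ and $(b)$ follow from your system alone, and $(c)$ needs only a Lemma~\ref{cone}-type argument rather than the two-step contraction.
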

For the proof, we  need the following elementary property in convex geometry.
\begin{lemma}\label{cone}
Let $\sigma$ be a convex polyhedral cone, of maximal dimension, in a finite dimensional real vector space $\ma{N}$. Let $\tau_1$ be a one-dimensional face of $\sigma$, and let $\alpha\in\ma{N}^{*}$ (the dual vector space) be such that
$\alpha\cdot\tau_1<0$ and $\alpha\cdot \eta\geq 0$ for every one-dimensional face $\eta\neq\tau_1$ of $\sigma$.

If $\tau_2$ is a one-dimensional face of $\sigma$ such that $\alpha\cdot\tau_2=0$, then $\tau_1+\tau_2$ is a face of $\sigma$.
\end{lemma}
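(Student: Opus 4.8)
We are asked to prove the following convex-geometric fact: given a full-dimensional convex polyhedral cone $\sigma$ in $\ma{N}$, a one-dimensional face $\tau_1$, and a linear functional $\alpha$ that is strictly negative on $\tau_1$ but nonnegative on every other one-dimensional face (i.e.\ on every other ray) of $\sigma$, then for any one-dimensional face $\tau_2$ with $\alpha\cdot\tau_2=0$, the cone $\tau_1+\tau_2$ is a face of $\sigma$.

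\medskip

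The plan is to exhibit a supporting functional for $\tau_1+\tau_2$ explicitly. Let $\beta\in\ma{N}^*$ be a supporting functional for the ray $\tau_2$, that is, $\beta\cdot\tau_2=0$ and $\beta\geq 0$ on all of $\sigma$; such a $\beta$ exists because $\tau_2$ is a face of the polyhedral cone $\sigma$. The idea is to correct $\beta$ using $\alpha$ so as to annihilate $\tau_1$ as well, while keeping the functional nonnegative on $\sigma$. Concretely, I would fix a generator $v_1$ of $\tau_1$ and consider the functional $\gamma:=\beta-\lambda\alpha$ for a suitable $\lambda$; since $\alpha\cdot v_1<0$, choosing $\lambda=\beta\cdot v_1/(\alpha\cdot v_1)\leq 0$ makes $\gamma\cdot v_1=0$, so $\gamma$ vanishes on $\tau_1$. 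Note also $\gamma\cdot\tau_2=\beta\cdot\tau_2-\lambda\alpha\cdot\tau_2=0-0=0$, so $\gamma$ vanishes on $\tau_2$ too.

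\medskip

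The main work is to check that $\gamma\geq 0$ on $\sigma$ and that its zero locus on $\sigma$ is exactly $\tau_1+\tau_2$; this is the step I expect to require the most care. Since $\lambda\leq 0$ and $\alpha\geq 0$ on every ray of $\sigma$ except $\tau_1$, the term $-\lambda\alpha$ is $\geq 0$ on every ray other than $\tau_1$, and $\beta\geq 0$ everywhere, so $\gamma\geq 0$ on every generator of $\sigma$ except possibly $\tau_1$ — but there $\gamma$ vanishes by construction. As $\sigma$ is generated by its rays, this gives $\gamma\geq 0$ on $\sigma$, so $\gamma$ is a supporting functional and $\sigma\cap\gamma^{\perp}$ is a face of $\sigma$ containing both $\tau_1$ and $\tau_2$, hence containing $\tau_1+\tau_2$. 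For the reverse inclusion, I would argue that any ray $\eta$ of $\sigma$ lying in $\gamma^{\perp}$ must satisfy $\beta\cdot\eta=\lambda\,\alpha\cdot\eta$; if $\eta\neq\tau_1$ then $\alpha\cdot\eta\geq 0$ while $\lambda\leq 0$ forces $\lambda\,\alpha\cdot\eta\leq 0$, and $\beta\cdot\eta\geq 0$, so both sides vanish, giving $\alpha\cdot\eta=0$ (when $\lambda<0$) and $\beta\cdot\eta=0$; the rays of $\sigma$ annihilated by $\beta$ are exactly those spanning the face $\tau_2$, which pins down $\eta$.

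\medskip

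The only delicate point is the degenerate case $\lambda=0$, i.e.\ $\beta\cdot v_1=0$: then $\gamma=\beta$ already vanishes on $\tau_1$, so $\tau_1\subseteq\sigma\cap\beta^\perp=$ (the face cut out by $\beta$), and I must still identify this face precisely as $\tau_1+\tau_2$. I would handle this by combining the two supporting functionals: the face $\sigma\cap\gamma^\perp$ is a face containing $\tau_1+\tau_2$, and to show equality I intersect with the condition $\alpha\cdot(-)=0$ for rays other than $\tau_1$, using that the rays of the face $\sigma\cap\gamma^\perp$ other than $\tau_1$ are precisely the rays of $\sigma$ killed by both $\alpha$ and $\beta$. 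Since $\tau_2$ is the minimal face on which $\beta$ vanishes and $\tau_1$ is the unique ray where $\alpha$ is negative, a short extreme-ray bookkeeping then yields $\sigma\cap\gamma^\perp=\tau_1+\tau_2$, completing the proof.
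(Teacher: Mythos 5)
Your construction is, up to positive rescaling, exactly the paper's: with $a:=\alpha\cdot v_1<0$ and $b:=\beta\cdot v_1$, your $\gamma=\beta-\lambda\alpha$ (with $\lambda=b/a$) equals $\frac{1}{|a|}\bigl(b\alpha+|a|\beta\bigr)$, and the paper verifies nonnegativity and the identification $\gamma^{\perp}\cap\sigma=\tau_1+\tau_2$ by the same ray-by-ray check you sketch. The one point to tighten: your ``delicate'' degenerate case $\lambda=0$ is in fact vacuous, provided you choose $\beta$ at the outset with $\beta^{\perp}\cap\sigma=\tau_2$ \emph{exactly} (possible since every face of a polyhedral cone is exposed), which is what the paper does and what your own bookkeeping already demands --- you invoke ``the rays of $\sigma$ annihilated by $\beta$ are exactly those spanning $\tau_2$,'' a statement that fails for a generic supporting functional of $\tau_2$. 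With that exact choice, $v_1\notin\tau_2$ forces $\beta\cdot v_1>0$, hence $\lambda<0$ strictly and the main case always applies; both nonnegativity of $\gamma$ and strict positivity $\gamma\cdot\eta>0$ on rays $\eta\neq\tau_1,\tau_2$ (which needs $\beta\cdot\eta>0$, again the exactness) then go through as you describe. As written, your fallback argument for $\lambda=0$ would not close: there $\gamma=\beta$, so membership of a ray in $\gamma^{\perp}$ imposes no condition involving $\alpha$, and $\sigma\cap\beta^{\perp}$ could be a face strictly larger than $\tau_1+\tau_2$; so delete that paragraph and instead strengthen the choice of $\beta$.
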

\begin{proof}
Since $\tau_2$ is a face of $\sigma$, there exists $\beta\in\ma{N}^{*}$ such that $\beta\cdot x\geq 0$ for every $x\in\sigma$, and $\beta^{\perp}\cap\sigma=\tau_2$.
Let $y\in\tau_1$ be a non-zero element, and
set 
$a:=\alpha\cdot y$ and $b:=\beta\cdot y$.
Then $a,b\in\R$, $a<0$, and $b>0$ (because $\tau_2\neq\tau_1$ by our assumptions). Let us consider $\gamma:=b\alpha+|a|\beta \in\ma{N}^{*}$. 

We have $\alpha\cdot\tau_2=\beta\cdot\tau_2=0$, hence $\gamma\cdot\tau_2=0$.
Moreover
$\gamma\cdot y=b\alpha\cdot y+|a|\beta\cdot y=0$,
namely $\gamma\cdot\tau_1=0$.
Finally if $\eta$ is a one-dimensional face of $\sigma$, different from $\tau_1$ and $\tau_2$, we have $\alpha\cdot\eta\geq 0$, $\beta\cdot\eta>0$, and hence $\gamma\cdot \eta>0$. 

Therefore $\gamma\cdot x\geq 0$ for every $x\in\sigma$, and $\gamma^{\perp}\cap\sigma=\tau_1+\tau_2$. This shows the statement. 
\end{proof}
\begin{proof}[Proof of Lemma \ref{2faces}]
We compute $\langle [D_1],[D_2]\rangle\cap\Mov(X)$.
Set $B:=\lambda_1D_1+\lambda_2D_2$ with $\lambda_i\in\R_{\geq 0}$ for $i=1,2$. By
\cite[Lemma 5.29(2)]{blowup}, $B$ is movable if and only if $B\cdot C_D\geq 0$ for every fixed prime divisor $D\subset X$, and this is equivalent to  $B\cdot C_{D_i}\geq 0$ for $i=1,2$, namely to:
\stepcounter{thm}
\begin{equation}\label{system}
\begin{cases}-\lambda_1+\lambda_2D_2\cdot C_{D_1}\geq 0\\
\lambda_1D_1\cdot C_{D_2}-\lambda_2\geq 0.\end{cases}\end{equation}
Let $\mathcal{S}\subseteq (\R_{\geq 0})^2$ be the set of non-negative solutions $(\lambda_1,\lambda_2)$ of \eqref{system}, so that $\mathcal{S}$ determines the intersection $\langle [D_1],[D_2]\rangle\cap\Mov(X)$. Notice that $(D_1\cdot C_{D_2})(D_2\cdot C_{D_1})$ is always non-negative, because $D_1\neq D_2$.
It is elementary to check that:
\begin{enumerate}[$\bullet$] 
\item $\mathcal{S}=\{(0,0)\}$ $\ \Longleftrightarrow\ $ $1-(D_1\cdot C_{D_2})(D_2\cdot C_{D_1})>0$ $\ \Longleftrightarrow\ $ $D_1\cdot C_{D_2}=0$ or $D_2\cdot C_{D_1}=0$;
\item
$\mathcal{S}$ is a   half-line
$\ \Longleftrightarrow\ $
 $1-(D_1\cdot C_{D_2})(D_2\cdot C_{D_1})=0$ $\ \Longleftrightarrow\ $  $D_1\cdot C_{D_2}=D_2\cdot C_{D_1}=1$, moreover in this case $\mathcal{S}=\{(\lambda,\lambda)\,|\,\lambda\geq0\}$;
\item $\mathcal{S}$ is a $2$-dimensional cone $\ \Longleftrightarrow\ $  $1-(D_1\cdot C_{D_2})(D_2\cdot C_{D_1})<0$$\ \Longleftrightarrow\ $  $(D_1\cdot C_{D_2})(D_2\cdot C_{D_1})\geq 2$.
\end{enumerate}

Similarly, we compute $\langle [C_{D_1}],[C_{D_2}]\rangle\cap\mov(X)$. We have
$$\mov(X)^{\vee}=\Eff(X)=\langle[D]\rangle_{D\text{\,fixed}}+\Mov(X).$$
Set $\gamma:=\lambda_1C_{D_1}+\lambda_2C_{D_2}$ with $\lambda_1,\lambda_2\in\R_{\geq 0}$. We have  $\gamma\cdot M\geq 0$ for every movable divisor $M$ in $X$ (see \cite[Lemma 5.29(2)]{blowup}). Hence
$\gamma\in\mov(X)$ 
if and only if $\gamma\cdot D\geq 0$ for every fixed prime divisor $D\subset X$,
and this is equivalent to $\gamma\cdot D_i\geq 0$ for $i=1,2$, namely to:
$$
\begin{cases}-\lambda_1+\lambda_2D_1\cdot C_{D_2}\geq 0\\
\lambda_1D_2\cdot C_{D_1}-\lambda_2\geq 0,\end{cases}$$
which is the same system as \eqref{system}, but with $\lambda_1$ and $\lambda_2$ interchanged. Thus the previous discussion yields $(a)$ and $(b)$.

We show $(c)$. Suppose for instance that $D_1\cdot C_{D_2}=0$. To see that $\langle [D_1],[D_2]\rangle$ is a face of $\Eff(X)$, we apply Lemma \ref{cone} with $\sigma=\Eff(X)$, $\tau_1=\langle[D_2]\rangle$, $\alpha=[C_{D_2}]$, and $\tau_2=\langle[D_1]\rangle$. It is enough to remark that $D\cdot C_{D_2}\geq 0$ for every prime divisor $D\neq D_2$.

Similarly, to see that $\langle [C_{D_1}],[C_{D_2}]\rangle$ is a face of $\Mov(X)^{\vee}$, we apply Lemma \ref{cone} with $\sigma=\Mov(X)^{\vee}$, $\tau_1=\langle[C_{D_1}]\rangle$, $\alpha=[D_1]$, and $\tau_2=\langle[C_{D_2}]\rangle$. Indeed $\langle[C_{D_1}]\rangle$ and 
$\langle[C_{D_2}]\rangle$ are one-dimensional faces of $\Mov(X)^{\vee}$ by \cite[Lemma 5.29(1)]{blowup}. Moreover $D_1\cdot\gamma\geq 0$ for every $\gamma\in\mov(X)$, and $D_1\cdot C_D\geq 0$ for every fixed prime divisor $D\neq D_1$. By \cite[Lemma 5.29(2)]{blowup} we have
$$\Mov(X)^{\vee}=\langle[C_D]\rangle_{D\text{\,fixed}}+\mov(X),$$
therefore $D_1\cdot\eta\geq 0$ for every one-dimensional face $\eta$ of $\Mov(X)^{\vee}$ different from $\langle[C_{D_1}]\rangle$. Thus the hypotheses of Lemma \ref{cone} are satisfied, and we get $(c)$.
\end{proof}
\begin{lemma}\label{description}
Let $X$ be a smooth Fano $4$-fold with $\rho_X\geq 7$, and let $D_1,D_2\subset X$ be two distinct fixed prime divisors such that $\langle[D_1],[D_2]\rangle\cap\Mov(X)=\{0\}$.
 Then, up to exchanging $D_1$ and $D_2$, one of the following holds:
\begin{enumerate}[$(a)$]
\item $D_1\cdot C_{D_2}=D_2\cdot C_{D_1}=0$ and $D_1\cap D_2=\emptyset$;
\item $D_1\cdot C_{D_2}=D_2\cdot C_{D_1}=0$ and $D_1\cap D_2$ is a disjoint union of exceptional planes;
\item $D_1\cdot C_{D_2}=D_2\cdot C_{D_1}=0$, $D_1$ is of type $(3,2)$, and $D_2$ is not of type $(3,0)^{sm}$;
\item  $D_1\cdot C_{D_2}>0$, $D_2\cdot C_{D_1}=0$, $D_1$ is of type  $(3,2)$,  and $D_2$ is of type $(3,1)^{sm}$ or $(3,0)^Q$.
\end{enumerate}
\end{lemma}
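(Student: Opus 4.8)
The plan is to translate the hypothesis into intersection numbers and then read off the geometry from the classification of fixed divisors. By Lemma~\ref{2faces}$(a)$, the condition $\langle[D_1],[D_2]\rangle\cap\Mov(X)=\{0\}$ is equivalent to $D_1\cdot C_{D_2}=0$ or $D_2\cdot C_{D_1}=0$, so after possibly exchanging the two divisors I would assume $D_2\cdot C_{D_1}=0$. The basic tool is a covering-curve principle: if $D,D'$ are distinct fixed prime divisors with $D'\cdot C_D=0$, then any member $C$ of the family of $C_D$ (so $[C]=[C_D]$) that meets $D'$ must be contained in $D'$, since otherwise $C\cap D'$ would be a nonempty finite set and $D'\cdot C=D'\cdot C_D=0$ would have to be positive. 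As the curves $C_{D_1}$ move in a family dominating $D_1$ (Theorem-Definition~\ref{long}$(e)$), this says that every such curve meeting $D_2$ lies in $D_1\cap D_2$. In particular, if $D_1\cap D_2=\emptyset$ then $C_{D_1}\subset D_1$ misses $D_2$ and $C_{D_2}\subset D_2$ misses $D_1$, so both intersection numbers vanish and we are in case $(a)$; from now on I would assume $D_1\cap D_2\neq\emptyset$, a closed subset of pure dimension $2$ since $X$ is smooth.

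To extract geometric consequences I would pass to a SQM $\ph\colon X\dasharrow X'$ after which the transform $D_1'$ of $D_1$ is the exceptional divisor of the elementary divisorial contraction $k$ of Theorem-Definition~\ref{long}$(a)$, so that $D_1'$ and the family $C_{D_1}$ acquire the standard model attached to the type of $D_1$ (for instance $D_1'\cong\pr^3$ with $C_{D_1}$ a line when $D_1$ is of type $(3,0)^{sm}$, $D_1'\cong Q$ with $C_{D_1}$ a line when of type $(3,0)^Q$, a $\pr^2$-bundle with $C_{D_1}$ a line in a fibre when of type $(3,1)^{sm}$, and the fibration of a $(3,2)$-contraction when of type $(3,2)$). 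Since $\ph$ is an isomorphism in codimension $1$ with indeterminacy loci a disjoint union of exceptional planes and lines (Lemma~\ref{basic1}), I would distinguish whether the surface $D_1\cap D_2$ survives into $X'$ or is swallowed by the indeterminacy locus. In the latter situation its two-dimensional components are exceptional planes, and one is led to case $(b)$; the point to check is that distinct components are disjoint and carry the normal bundle $\ol_{\pr^2}(-1)^{\oplus 2}$, using Lemma~\ref{basic1}.

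When $D_1\cap D_2$ does meet the locus where $\ph$ is an isomorphism, I would combine the line/fibre geometry of the standard model with the covering-curve principle, pushing $D_2$ forward to $D_2'$ (which preserves $D_2'\cdot(\text{line})=D_2\cdot C_{D_1}=0$). Every line of the model through a point of $D_1'\cap D_2'$ then lies in $D_2'$: for $D_1'\cong\pr^3$ the lines through a point cover $\pr^3$ and force the absurd inclusion $D_1'\subseteq D_2'$, while on a quadric or a $\pr^2$-bundle the swept locus is a union of surfaces contained in $D_1\cap D_2$, and one reads off from their normal bundles whether they are exceptional planes (case $(b)$) or signal the fibration of a type $(3,2)$ divisor (cases $(c)$ and $(d)$). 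In the symmetric subcase $D_1\cdot C_{D_2}=0$ the discussion is symmetric and, once one of the divisors is recognised as type $(3,2)$, yields case $(c)$, using that such a divisor contains no exceptional plane (\cite[Rem.~2.17(2)]{blowup}) and hence cannot be responsible for case $(b)$. In the asymmetric subcase $D_1\cdot C_{D_2}>0$ the curves $C_{D_2}$ genuinely meet $D_1$ whereas those $C_{D_1}$ hitting $D_2$ are trapped in $D_1\cap D_2$; this imbalance should force the $(3,2)$-fibration on $D_1$ and identify $D_2$, via its intersection numbers and the exclusion of types $(3,0)^{sm}$ and $(3,2)$, as type $(3,1)^{sm}$ or $(3,0)^Q$, giving case $(d)$.

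The main obstacle I expect is exactly this passage from the numerical data $D_i\cdot C_{D_j}$ to the precise type of each divisor in cases $(c)$ and $(d)$, together with the bookkeeping of how $D_2$ and the surface $D_1\cap D_2$ transform under $\ph$: the convex-geometry reduction and the covering-curve principle are clean, but the identification of the $(3,2)$-structure, the determination of the normal bundles of the planes in $D_1\cap D_2$, and the exclusion of the remaining types rely on the explicit local models and on detailed information about intersections of fixed divisors from \cite{eff,blowup}, which is where the real work lies.
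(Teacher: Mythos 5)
Your opening reduction is correct and matches the paper: by Lemma~\ref{2faces}$(a)$ the hypothesis $\langle[D_1],[D_2]\rangle\cap\Mov(X)=\{0\}$ is equivalent to $D_1\cdot C_{D_2}=0$ or $D_2\cdot C_{D_1}=0$, and your covering-curve principle plus the $\pr^3$-sweep correctly handles case $(a)$ and rules out some configurations. But there is a genuine gap exactly where you flag it, and the mechanism you propose would not close it. Concretely: if $D_1$ is of type $(3,1)^{sm}$, your sweep argument shows only that $D_1'\cap D_2'$ is a union of fibres of the blow-up, i.e.\ planes with normal bundle $\ol_{\pr^2}(-1)\oplus\ol_{\pr^2}$ --- these are \emph{not} exceptional planes, so no contradiction arises and no type identification for $D_2$ follows from "reading off normal bundles". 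Worse, two of the assertions of the lemma are not accessible to any sweeping argument: $(c)$ in the case where both divisors are of type $(3,2)$ requires deducing $D_2\cdot C_{D_1}=0$ from $D_1\cdot C_{D_2}=0$, which is precisely \cite[Lemma~2.2(b)]{cdue} (or, when $\delta_X\geq 3$, the product dichotomy of Th.~\ref{buonconsiglio}); and excluding type $(3,0)^{sm}$ for $D_2$ in case $(d)$ requires \cite[Lemma~5.41]{blowup}. Your covering-curve principle only yields containments of curves in $D_1\cap D_2$, never these vanishing and exclusion statements, and your sketch offers no substitute.

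The paper's proof also goes in the opposite direction from yours: it contracts $D_2$, not $D_1$, using \cite[Th.~5.1]{blowup} to get $X\dasharrow\w{X}\stackrel{f}{\la}Y$ with $\Exc(f)=\w{D}_2$, and then analyzes the image $B_1\subset Y$ of $D_1$, which is again a fixed prime divisor by \cite[Lemma~2.21]{blowup}. The case $\w{D}_1\cap\w{D}_2=\emptyset$ gives $(a)$/$(b)$ at once, since the indeterminacy locus of $X\dasharrow\w{X}$ is a disjoint union of exceptional planes (Lemma~\ref{basic1}$(a)$); when $D_2$ is of type $(3,1)^{sm}$ the target $Y$ is a smooth Fano $4$-fold and \cite[Lemma~5.11]{blowup}, applied to $B_1$ and the blown-up curve $C$, forces $B_1$ (hence $D_1$) to be of type $(3,2)$ with $B_1\cdot C\neq 0$, the sign of $B_1\cdot C$ distinguishing $(c)$ from $(d)$. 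This downstairs analysis of how fixed divisors meet a blown-up curve is the actual source of the type identifications you are missing; staying upstairs with the local model of $D_1$, as you propose, cannot reproduce it without importing essentially these same lemmas from \cite{blowup} and \cite{cdue}. So: correct reduction and easy cases, but the core of cases $(b)$--$(d)$ is missing, as you yourself acknowledge.
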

\begin{proof}
By \cite[Th.~5.1]{blowup} there  is a diagram
$$X\dasharrow \w{X}\stackrel{f}{\la}Y$$
where the first map is a SQM and $f$ is an elementary divisorial contraction with exceptional divisor the transform $\w{D}_2\subset\w{X}$ of $D_2$. Let $\w{D}_1\subset\w{X}$ be the transform of $D_1$.  By \cite[Lemma 2.21]{blowup}, $D_1$ is the transform of a fixed prime divisor $B_1\subset Y$. 

If $\w{D}_1\cap\w{D}_2=\emptyset$, then  $D_1\cap D_2$ is  contained in the indeterminacy locus of the map $X\dasharrow\w{X}$, which is a disjoint union of exceptional planes by Lemma \ref{basic1}$(a)$. Therefore either $D_1\cap D_2=\emptyset$ and we get $(a)$, or $D_1\cap D_2$ has pure dimension $2$ and we get
$(b)$.

We assume from now on that 
 $\w{D}_1\cap\w{D}_2\neq\emptyset$.

\medskip

Suppose that $D_2$ is of type $(3,1)^{sm}$. Then  $Y$ is a smooth Fano $4$-fold by \cite[Th.~5.1]{blowup}, $f$ is the blow-up of a smooth curve $C\subset Y$, and
  $B_1\cap C\neq\emptyset$. Then \cite[Lemma 5.11]{blowup} yields that $B_1$ is the exceptional divisor of an elementary divisorial contraction of type $(3,2)$, and either $B_1\cdot C>0$, or $B_1\cdot C<0$. Thus $B_1$ is generically a $\pr^1$-bundle over a surface, and the general fiber $F$ of this $\pr^1$-bundle satisfies $B_1\cdot F=K_Y\cdot F=-1$.
  Using Lemma \ref{basic1}$(a)$ and \cite[Lemma 2.18]{blowup}, one sees that
$D_1$ must be of type $(3,2)$. Moreover $C\cap F=\emptyset$ implies that $\w{D}_2$ is disjoint from the transform $\w{F}$ of $F$ in $\w{X}$, and $\w{D}_1$ is still generically a $\pr^1$-bundle with fiber $\w{F}$. The indeterminacy locus of the map $\w{X}\dasharrow X$ has dimension at most one (see Lemma \ref{basic1}$(a)$), hence $\w{F}$ is contained in the open subset where this map is an isomorphism, and in $X$ we get
$D_2\cdot C_{D_1}=\w{D}_2\cdot \w{F}=0$. Finally it 
 is easy to check that $D_1\cdot C_{D_2}=0$ if  $B_1\cdot C>0$ (and we have $(c)$), while 
$D_1\cdot C_{D_2}>0$ if  $B_1\cdot C<0$ (and we have $(d)$). So we get the statement.

\medskip

We can assume now that neither $D_1$ nor $D_2$ are of type $(3,1)^{sm}$.
Suppose  that $D_2$ is of type $(3,0)^{sm}$ or $(3,0)^Q$. Then $\w{D}_2$ is isomorphic to $\pr^3$ or to an irreducible quadric; let $\Gamma\subset \w{D}_2$ be a curve corresponding to a line. We have 
$\w{D}_1\cdot\Gamma>0$, and since $\Gamma$ is contained in the open subset where the map $\w{X}\dasharrow X$ is an isomorphism (see Th.-Def.~\ref{long}$(e)$), we also have
 $D_1\cdot C_{D_2}>0$. This yields $D_2\cdot C_{D_1}=0$ by Lemma \ref{2faces}. Therefore $D_1$ cannot be of type $(3,0)^{sm}$ nor $(3,0)^Q$, and the only possibility is that $D_1$ is of type $(3,2)$. 
Moreover, since $f(\w{D}_2)$ is contained in $B_1$, \cite[Lemma 5.41]{blowup} yields that $D_2$ cannot be of type $(3,0)^{sm}$, so we get again $(d)$.

\medskip

We are left with the case where both $D_1$ and $D_2$ are of type $(3,2)$, and we can assume that $D_1\cdot C_{D_2}=0$ by Lemma \ref{2faces}. If $\delta_X\geq 3$, then Th.~\ref{buonconsiglio} implies that $X$ is a product of surfaces; in this case it is easy to check directly that  $D_2\cdot C_{D_1}=0$. If $\delta_X\leq 2$, then we get  $D_2\cdot C_{D_1}=0$ by \cite[Lemma 2.2(b)]{cdue}. So we have $(c)$.
\end{proof}
\subsection{Special rational contractions of Fano $4$-folds with $\rho_X\geq 7$}\label{fabri}
\noindent Given a Fano $4$-fold $X$ with $\rho_X\geq 7$, and a  special rational contraction of fiber type
 $f\colon X\dasharrow Y$, in this subsection we show 
 that, for every  prime divisor $B$ of $Y$, $f^*B$ has at most two irreducible components. Moreover  we 
give conditions on the type of the fixed prime divisors in $f^*B$, when $f^*B$ is reducible.
\begin{lemma}\label{montenero}
Let $X$ be a smooth Fano $4$-fold with $\rho_X\geq 7$, or $\rho_X=6$ and $\delta_X\leq 2$, and $f\colon X\dasharrow Y$ a special rational contraction; notation as in \ref{notation}. Let $i\in\{1,\dotsc,m\}$.

If $\dim Y=3$, then every fixed divisor in $f^*B_i$ is of type $(3,2)$.

If $\dim Y=2$, then every fixed divisor in $f^*B_i$ is of type $(3,2)$ or $(3,1)^{sm}$.
\end{lemma}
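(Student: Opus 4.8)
The plan is to deduce this directly from Lemma \ref{unosolo} together with the classification of fixed prime divisors in Th.-Def.~\ref{long}. The key observation driving everything is that the \emph{type} $(a,b)$ of a fixed divisor records, in its second entry $b$, the dimension $\dim\sigma(\Exc\sigma)$ of the image of the exceptional locus of \emph{any} elementary divisorial contraction $\sigma$ extracting (the transform of) that divisor. So it suffices to produce one such contraction and to bound $b$ from below by $\dim Y-1$.

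First I would fix a single irreducible component $E_0$ of $f^*B_i$. By Cor.~\ref{face} every prime divisor in $f^*B_i$ is a fixed divisor, so $E_0$ is a fixed prime divisor, and since we are under the hypothesis $\rho_X\geq 7$ (or $\rho_X=6$ and $\delta_X\leq 2$) — which is exactly what Th.-Def.~\ref{long} requires — that Theorem-Definition assigns $E_0$ a well-defined type among $(3,0)^{sm}$, $(3,0)^Q$, $(3,1)^{sm}$, $(3,2)$. Next I would apply Lemma \ref{unosolo} to $E_0$: this yields a SQM $\varphi\colon X\dasharrow\widetilde{X}$ and an elementary divisorial contraction $\sigma\colon\widetilde{X}\to Z$ whose exceptional divisor is the transform of $E_0$, together with the crucial inequality $\dim\sigma(\Exc\sigma)\geq\dim Y-1$. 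Because $\sigma$ is an elementary divisorial contraction extracting the transform of the fixed divisor $E_0$, Th.-Def.~\ref{long}$(a)$ and $(b)$ identify the type of $\sigma$ with the type of $E_0$; in particular the second entry $b$ of that type equals $\dim\sigma(\Exc\sigma)$.

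Finally I would simply read off the conclusion by comparing the inequality $b=\dim\sigma(\Exc\sigma)\geq\dim Y-1$ with the list of admissible types. If $\dim Y=3$ then $b\geq 2$, and the only admissible type with $b\geq 2$ is $(3,2)$, so $E_0$ is of type $(3,2)$. If $\dim Y=2$ then $b\geq 1$, and the admissible types with $b\geq 1$ are precisely $(3,1)^{sm}$ and $(3,2)$. Since $E_0$ was an arbitrary component of $f^*B_i$, this proves both assertions. I do not anticipate a genuine obstacle, as the entire substance is already packaged in Lemma \ref{unosolo} (existence of the divisorial contraction whose exceptional locus has image of the prescribed dimension) and in the classification Th.-Def.~\ref{long}. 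The only point deserving a little care is the bookkeeping of transforms under $\varphi$ and under the SQM implicit in Notation \ref{notation} used to make $f$ regular, together with the verification that $E_0$ is indeed a fixed divisor so that Th.-Def.~\ref{long} is applicable to it.
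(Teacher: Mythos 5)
Your proposal is correct and follows exactly the paper's own proof: the paper likewise applies Lemma \ref{unosolo} to a component $E_0$ of $f^*B_i$ to obtain an elementary divisorial contraction $\sigma$ extracting the transform of $E_0$ with $\dim\sigma(\Exc(\sigma))\geq\dim Y-1$, and then concludes via Th.-Def.~\ref{long}. You merely spell out the final step (matching the bound on $\dim\sigma(\Exc(\sigma))$ against the second entry of the four admissible types) and the applicability of Th.-Def.~\ref{long}, which the paper leaves implicit.
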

\begin{proof}
Let $E_0$ be an irreducible component of $f^*B_i$. By Lemma \ref{unosolo} there are a SQM $X\dasharrow\w{X}$ and an elementary divisorial contraction $\sigma\colon\w{X}\to Z$ such that $\Exc(\sigma)$ is the transform of $E_0$, and $\dim\sigma(\Exc(\sigma))\geq\dim Y-1$. Th.-Def.~\ref{long} yields the statement.
\end{proof}
\begin{lemma}\label{r_i=2}
Let $X$ be a smooth Fano $4$-fold with $\rho_X\geq 7$, and $f\colon X\dasharrow Y$ a special rational contraction; notation as in \ref{notation}. Then $r_i=2$ for every $i=1,\dotsc,m$.
\end{lemma}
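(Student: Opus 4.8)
The plan is to argue by contradiction, assuming $r_i\geq 3$ for some $i$, and to reach a contradiction purely from the geometry of fixed divisors together with the pairwise trichotomy of Lemma~\ref{description}. Throughout I keep $X$ the given Fano $4$-fold and let $E_1,\dotsc,E_{r_i}$ denote the transforms in $X$ of the irreducible components of $f^*B_i$ (formed on a model where $f$ is regular), so that $f^*B_i=\sum_{j}m_jE_j$ with each $m_j\geq 1$. By Cor.~\ref{face} the $E_j$ are fixed prime divisors, and the classes of any $r_i-1$ of them span a simplicial face of $\Eff(X)$ meeting $\Mov(X)$ only at $0$. Since $r_i\geq 3$, given any two components $E_j,E_k$ I can choose a set of $r_i-1$ components containing both; hence $\langle[E_j],[E_k]\rangle$ is a face of $\Eff(X)$ and $\langle[E_j],[E_k]\rangle\cap\Mov(X)=\{0\}$. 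In particular every pair $E_j,E_k$ satisfies the hypotheses of Lemmas~\ref{2faces} and~\ref{description}.

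The key step will be to show that for \emph{each} component $E_0$ the curve $C_{E_0}$ is contracted by $f$. For this I apply Lemma~\ref{unosolo} to $E_0$: it yields a SQM $X\dasharrow\wi X$ and an elementary divisorial contraction $\sigma$ whose exceptional divisor is the transform of $E_0$, fitting into a factorization of $f$. By Th.-Def.~\ref{long}$(d)$--$(e)$ a general $C_{E_0}$ is contracted by $\sigma$, lies in the isomorphism locus of the SQM, and satisfies $E_0\cdot C_{E_0}=-1$. Composing with the factorization, $C_{E_0}$ is contracted by $f$, so on the regular model the class $f^*B_i$ meets $C_{E_0}$ in $0$; translating back (the general $C_{E_0}$ avoids all indeterminacy loci, so its intersection numbers are preserved) this reads $\sum_j m_j\,(E_j\cdot C_{E_0})=0$. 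Because $E_0\cdot C_{E_0}=-1$ and all $m_j\geq 1$, there must exist a component $E'\neq E_0$ with $E'\cdot C_{E_0}>0$.

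I would then feed this into the classification of pairs. Since $E'\cdot C_{E_0}>0$ and $\langle[E'],[E_0]\rangle\cap\Mov(X)=\{0\}$, Lemma~\ref{2faces}$(a)$ forces $E_0\cdot C_{E'}=0$, and Lemma~\ref{description} then places the pair in case $(d)$, with $E'$ of type $(3,2)$ and $E_0$ of type $(3,1)^{sm}$ or $(3,0)^Q$. Thus the analysis proves two incompatible things: on the one hand every component of $f^*B_i$ is \emph{not} of type $(3,2)$ (each arising as the ``$D_2$'' of such a pair); on the other hand it exhibits a component $E'$ that \emph{is} of type $(3,2)$. Concretely, rerunning the argument with $E'$ in place of $E_0$ produces a component $E''$ with $E''\cdot C_{E'}>0$, and case $(d)$ would force the type-$(3,2)$ divisor $E'$ to be of type $(3,1)^{sm}$ or $(3,0)^Q$. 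This contradiction gives $r_i\leq 2$, and since $f^*B_i$ is reducible, $r_i=2$.

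I expect the main obstacle to be the core step of the second paragraph, namely establishing $\sum_j m_j(E_j\cdot C_{E_0})=0$: this requires verifying that the elementary contraction supplied by Lemma~\ref{unosolo} is genuinely compatible with $f$, so that $C_{E_0}$ is $f$-contracted, and then handling the intersection theory for the \emph{rational} map $f$ by passing to a regular model and using that a general $C_{E_0}$ meets none of the (finitely many) exceptional planes forming the indeterminacy loci. Once this is secured the remainder is a short formal consequence of Lemmas~\ref{2faces} and~\ref{description}. I note that one may optionally invoke Lemma~\ref{montenero} to restrict the admissible types of the $E_j$ in advance, which slightly shortens the final contradiction, and that for $\dim Y=3$ the conclusion also follows directly from Lemma~\ref{klt}.
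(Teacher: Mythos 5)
Your proof is correct, and its overall architecture coincides with the paper's: both reduce to showing that for each component $E_0$ of $f^*B_i$ there is another component meeting $C_{E_0}$ positively, and both then run the identical restart argument --- Cor.~\ref{face} gives $\langle[E_j],[E_k]\rangle\cap\Mov(X)=\{0\}$ for any pair of components once $r_i\geq 3$, Lemma~\ref{2faces}$(a)$ kills one of the two intersection numbers, and case $(d)$ of Lemma~\ref{description} forces the positively-meeting divisor to be of type $(3,2)$ while the other is not, so iterating once yields the contradiction. Where you genuinely differ is in the proof of the key claim. The paper exhibits the second component geometrically: in the diagram of Lemma~\ref{unosolo} it picks an irreducible component $E_Z$ of $g^{-1}(B_1)$ containing $\sigma(\w{D})$ and observes $\w{E}\cdot\NE(\sigma)>0$ by construction. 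You instead argue numerically: since the morphism $Z\to Y$ in Lemma~\ref{unosolo} is regular, $f$ becomes regular on $\wi{X}$ as $h\circ\sigma$; the transform $\Gamma$ of $C_{E_0}$ is $\sigma$-contracted, hence $(h\circ\sigma)^*B_i\cdot\Gamma=0$, and combined with $E_0\cdot C_{E_0}=-1$ and the positivity of the coefficients of the pullback (these are positive rationals rather than integers $\geq 1$, since $B_i$ is only $\Q$-Cartier, but positivity is all you use) some other component must meet $C_{E_0}$ positively. This is valid and arguably slicker, as it replaces the geometric containment argument by pure intersection theory. One caution: your parenthetical justification that a general $C_{E_0}$ ``avoids all indeterminacy loci'' because these are finitely many exceptional planes is not by itself sound --- a covering family of curves on a $3$-fold can perfectly well have every member meet a fixed surface (e.g.\ the fibers of a $\pr^1$-bundle all meet a section). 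The correct and sufficient justification is the one you also cite, exactly as in the paper's own proof: Th.-Def.~\ref{long}$(e)$ asserts that $C_{E_0}$ lies in the isomorphism locus of the SQM realizing $E_0$ as an exceptional divisor, which makes the transfer of the intersection numbers from $\wi{X}$ back to $X$ legitimate.
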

\begin{proof}
We consider for simplicity $i=1$.
\begin{claim}
For every irreducible component $D$ of $f^*B_1$, there exists another component $E$ of $f^*B_1$ such that $E\cdot C_D>0$. 
\end{claim}
Let us first show that the Claim implies the statement. Assume by contradiction that $r_1>2$, and let us consider a component $D_1$ of $f^*B_1$. By the Claim, there exists a second component $D_2$ with $D_2\cdot C_{D_1}>0$, and since $r_1\geq 3$, we have
$\langle [D_1],[D_2]\rangle\cap\Mov(X)=\{0\}$
by Cor.~\ref{face}. Applying Lemma \ref{description},
 we conclude that $D_1$ is not of type $(3,2)$, and $D_2$ is of type $(3,2)$. 

Now we restart with $D_2$, and we deduce that $D_2$ is not of type $(3,2)$, a contradiction. Hence $r_1=2$.

\medskip

We prove the Claim. By Lemma \ref{unosolo}, there exists a diagram:
$$\xymatrix{{X}\ar@{-->}[d]_{f}\ar@{-->}[r]^{\ph}&{\w{X}}\ar[d]^{\sigma}
\\
Y& {Z}\ar[l]^{g}
}$$
where $\ph$ is a SQM and $\sigma$ is an elementary divisorial contraction with $\Exc(\sigma)=\w{D}$, the transform of $D$ in $\w{X}$. 

Since $g\circ\sigma$ is special, we have $g(\sigma(\w{D}))=B_1$ and hence
 $\sigma(\w{D})\subset g^{-1}(B_1)$; let $E_Z\subset Z$ be an irreducible component of $g^{-1}(B_1)$ containing $\sigma(\w{D})$. Let $\w{E}\subset \w{X}$ and $E\subset X$ be the transforms of $E_Z$, so that $E$ is an irreducible component of $f^*B_1$. Note that $\w{E}\cdot\NE(\sigma)>0$ by construction.

Now let $\Gamma\subset \w{D}$ be a general minimal irreducible curve contracted by $\sigma$; by Th.-Def.~\ref{long}$(d)$ and $(e)$, the transform of $\Gamma$ in $X$ is the curve $C_D$, and $\Gamma$ is contained in the open subset where $\ph^{-1}\colon\w{X}\dasharrow X$ is an isomorphism.  Therefore
 $E\cdot C_D=\w{E}\cdot\Gamma>0$.
\end{proof}
\section{Fano $4$-folds to surfaces}\label{sec_surf}
\noindent In this section we study rational contractions from a Fano $4$-fold to a surface, and show the following.
\begin{thm}\label{surf}
Let $X$ be a smooth Fano $4$-fold having a rational contraction $f\colon X\dasharrow S$ with $\dim S=2$.
Then one of the following holds:
\begin{enumerate}[$(i)$]
\item $X$ is a product of surfaces;
\item
  $\rho_X\leq 12$;
\item
  $13\leq \rho_X\leq 17$,  $S$ is a smooth del Pezzo surface, 
the general fiber $F$ of $f$ is a smooth del Pezzo surface with $4\leq\dim\N(F,X)\leq \rho_F\leq 8$,
and $\rho_X\leq 9+\dim\N(F,X)$.
\item
$S\cong\pr^2$ and $f$ is special.
\end{enumerate}
\end{thm}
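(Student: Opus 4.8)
The plan is to reduce to a special fibration over a smooth surface, apply the dimension formula of Corollary~\ref{boundrho}, and then bound each of its three terms.

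First I would dispose of the cheap alternatives: if $\rho_X\leq 12$ we are in case $(ii)$, and if $X$ is a product of surfaces we are in case $(i)$, so I assume from now on that $\rho_X\geq 13$ and that $X$ is not a product. Theorem~\ref{buonconsiglio} then forces $\delta_X\leq 1$. By Proposition~\ref{factorization1} I factor $f=h\circ g$, where $g\colon X\dasharrow Z$ is a special rational contraction of fiber type onto a surface $Z$ and $h\colon Z\to S$ is birational; by Lemma~\ref{2}, $Z$ is smooth. Using Lemma~\ref{basic2} I may assume, after an SQM, that $g\colon X'\to Z$ is a regular, $K$-negative contraction with $X'$ smooth. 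Its general fiber $F$ is then a smooth surface with $-K_F\cong(-K_{X'})|_F$ ample, i.e. a del Pezzo surface, so $\rho_F\leq 9$; moreover a general $F$ is disjoint from the (at most one-dimensional) indeterminacy locus of $X'\dasharrow X$, so $\dim\N(F,X)=\dim\N(F,X')\leq\rho_F$ by Lemma~\ref{stephane}. Since $\rho_X\geq 7$, Lemma~\ref{r_i=2} gives $r_i=2$ for all $i$, and Corollary~\ref{boundrho} becomes
$$\rho_X=\rho_Z+\dim\N(F,X)+m .$$

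The heart of the argument is then the inequality $\rho_Z+m\leq 9$, which I expect to hold unless $Z\cong\pr^2$. The input is the classification of the reducible fibers: by Lemma~\ref{montenero} each component $E_i$ of $g^*B_i$ is a fixed prime divisor of type $(3,2)$ or $(3,1)^{sm}$, and by Corollary~\ref{face} the classes $[E_1],\dots,[E_m]$ span a simplicial face of $\Eff(X)$ meeting $\Mov(X)$ only at $0$, so that Lemma~\ref{description} governs each pair $(E_i,E_j)$, recording whether $E_i\cap E_j=\emptyset$ and the intersection numbers $E_i\cdot C_{E_j}$. Because $\rho_X\geq 13$ and $X$ is not a product, Lemma~\ref{prop32} further forces $\N(D,X)=\N(X)$ for every fixed divisor $D$ of type $(3,2)$. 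Combining these constraints with $\delta_X\leq 1$, and arguing in the spirit of the relative-dimension-one estimate of Proposition~\ref{reldim1} --- where pairwise disjoint multiple fibers produce a large Lefschetz defect --- I would show that the curves $B_1,\dots,B_m\subset Z$ are numerically independent enough to force $\rho_Z+m\leq 9$. This yields $\rho_X\leq 9+\dim\N(F,X)$, hence $13\leq\rho_X\leq 17$ and $\dim\N(F,X)\leq 8$, which is case $(iii)$; the del Pezzo property of $S$ is then obtained by descending the positivity of $-K_X$ through $g$ (equivalently, through the quasi-elementary reduction of Theorem~\ref{factorization2}). The bound on $\rho_Z+m$ must degenerate exactly when $\rho_Z=1$, i.e. $Z\cong\pr^2$: there all curves of $Z$ are numerically proportional, the $B_i$ cannot be separated, and the Lefschetz-defect estimate collapses. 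In that case $h\colon\pr^2\to S$ is a birational morphism from a surface of Picard number one, hence an isomorphism (no curve of $\pr^2$ can be contracted to a point of the normal surface $S$), so $S\cong\pr^2$ and $f=h\circ g$ is special, which is case $(iv)$.

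The step I expect to be the main obstacle is precisely bounding $\rho_Z+m$. Unlike the relative-dimension-one situation, the fibers are now surfaces, so one cannot split a reducible fiber into two anticanonical-degree-one curves and read the geometry off directly; instead one must run the fixed-divisor classification of Theorem-Definition~\ref{long} through Lemma~\ref{description}, keep track of the incidence pattern of the $E_i$ and of the numbers $E_i\cdot C_{E_j}$, and convert a configuration that is ``too large'' into a lower bound on $\delta_X$ contradicting $\delta_X\leq 1$. Separating the genuinely unbounded base $Z\cong\pr^2$ from the del Pezzo case, proving that $S$ is actually a smooth del Pezzo surface, and pinning down the sharp constants $17$ and $8$ (in particular ruling out $\dim\N(F,X)=9$), are the most delicate points.
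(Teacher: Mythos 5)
Your skeleton (factor through a special rational contraction $g\colon X\dasharrow T$ onto a smooth surface via Prop.~\ref{factorization1} and Lemmas \ref{basic2}, \ref{2}; use Lemma \ref{r_i=2} and Cor.~\ref{boundrho}; separate the base $\pr^2$; conclude $S\cong\pr^2$ since the birational morphism $h$ from a surface of Picard number one is an isomorphism) matches the paper, but the step you yourself flag as the main obstacle is a genuine gap, and your proposed route to close it would not work. Lemma \ref{montenero} allows, when $\dim Y=2$, components of type $(3,1)^{sm}$ as well as $(3,2)$, so a reducible fiber $g^*B_i$ could a priori have \emph{both} components of type $(3,1)^{sm}$; your toolkit (Lemma \ref{description}, $\delta_X\leq 1$, a Lefschetz-defect count ``in the spirit of Prop.~\ref{reldim1}'') has no purchase on that configuration, since Lemma \ref{prop32} applies only to type $(3,2)$ and there is no analogous statement for $(3,1)^{sm}$ divisors dominating curves in $T$. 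The paper needs a dedicated result here (Lemma \ref{chitarre}): each reducible fiber has at least one component of type $(3,2)$, proved by a nontrivial geometric argument (running an $E$-MMP with scaling of $-K$ so the flips are $K$-negative, controlling indeterminacy loci via Lemma \ref{basic1}, and showing two $(3,1)^{sm}$ components would force the classes $[C_E]$ and $[C_{\wi E}]$ to be proportional, contradicting Th.-Def.~\ref{long}$(e)$). Moreover, once a $(3,2)$ component $E$ exists, your target inequality $\rho_Z+m\leq 9$ is the wrong goal: since $E$ dominates the curve $B_1$, one has $\tilde g_*\N(E,\w X)=\R[B_1]$, so $\codim\N(E,\w X)\geq \rho_T-1$, and Lemma \ref{prop32} puts you in cases $(i)$ or $(ii)$ as soon as $\rho_T\geq 2$ --- no counting of $m$ is needed, and in particular case $(iii)$ can only arise with $m=0$.

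There is a second gap in the quasi-elementary case $m=0$, which your sketch essentially skips. Your argument yields only $\rho_F\leq 9$ and gives no proof that $T$ (hence $S$) is del Pezzo; ``descending the positivity of $-K_X$ through $g$'' is not justified for a rational contraction. The paper gets the sharp constants from prior results: if $g$ is regular, \cite[Th.~1.1(i)]{fanos} with $\rho_X\geq 13$ forces $X$ to be a product of surfaces; if $g$ is not regular, \cite[Cor.~3.9]{eff} gives the improved bound $\rho_F\leq 8$ (this is exactly how $\dim\N(F,X)=9$ is excluded), and \cite[Prop.~4.1]{eff} gives that $T$ is del Pezzo, whence $\rho_T\leq 9$, $\rho_X\leq 17$, and $\dim\N(F,X)=\rho_X-\rho_T\geq 4$. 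Without these inputs, the constants $17$ and $8$ and the del Pezzo conclusions in $(iii)$ are not reachable by the arguments you outline.
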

\begin{lemma}\label{chitarre}
Let $X$ be a smooth Fano $4$-fold with $\rho_X\geq 7$, and $f\colon X\dasharrow S$ a special rational contraction with $\dim S=2$; notation as in \ref{notation}. Then for every $i=1,\dotsc, m$ the divisor $f^*B_i$ has two irreducible  components, one a fixed divisor of type $(3,2)$, and  the other one of type $(3,2)$ or  $(3,1)^{sm}$.
\end{lemma}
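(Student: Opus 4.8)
The plan is to first reduce the statement, using the results already established, to the single assertion that the two components of $f^*B_i$ cannot both be of type $(3,1)^{sm}$. Since $\rho_X\geq 7$, Lemma~\ref{r_i=2} gives $r_i=2$, so $f^*B_i$ has exactly two irreducible components $D_1$ and $D_2$; by Corollary~\ref{face} both are fixed prime divisors, and by Lemma~\ref{montenero} (in the case $\dim Y=2$) each of them is of type $(3,2)$ or $(3,1)^{sm}$. Thus it suffices to prove that at least one of $D_1,D_2$ is of type $(3,2)$.

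Assume then that $D_2$ is of type $(3,1)^{sm}$; I claim this forces $D_1$ to be of type $(3,2)$, which proves the statement. First I would record the incidence $D_1\cdot C_{D_2}>0$: this is exactly the Claim proved inside Lemma~\ref{r_i=2}, applied to the component $D_2$, whose only companion component is $D_1$ (it can alternatively be re-derived directly from Lemma~\ref{unosolo} by blowing down $D_2$ relatively to $f$). Next, since $D_2$ is of type $(3,1)^{sm}$, Theorem--Definition~\ref{long} together with \cite{blowup} provides a SQM $X\dashrightarrow\w{X}$ and an elementary divisorial contraction $k\colon\w{X}\to Y$ which is the blow-up of a smooth curve $C\subset Y$, with $\Exc(k)$ the transform $\w{D}_2$ of $D_2$ and $Y$ a smooth Fano $4$-fold. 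By \cite{blowup} the transform $B_1:=k(\w{D}_1)$ of $D_1$ is a fixed prime divisor in $Y$.

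The heart of the argument is then to check that $B_1$ meets the center $C$. Because $C_{D_2}$ is (the transform of) a line in a $\pr^2$-fibre of $k$ and lies in the locus where $X\dashrightarrow\w{X}$ is an isomorphism (Theorem--Definition~\ref{long}$(e)$), the inequality $D_1\cdot C_{D_2}>0$ translates into $\w{D}_1\cdot(\text{line})>0$; hence $\w{D}_1$ meets that $\pr^2$-fibre, so $\w{D}_1\cap\w{D}_2\neq\emptyset$ and therefore $B_1\cap C\neq\emptyset$. At this point \cite{blowup} (used in the same way in the proof of Lemma~\ref{description}) applies and shows that $B_1$ is the exceptional divisor of an elementary divisorial contraction of type $(3,2)$; consequently $D_1$ is of type $(3,2)$, as desired. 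Combined with Lemma~\ref{montenero}, one of $D_1,D_2$ is of type $(3,2)$ and the other of type $(3,2)$ or $(3,1)^{sm}$.

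I expect the main subtlety to be precisely the reason why this works even though Lemma~\ref{description} does \emph{not} apply here: since $D_1\cdot C_{D_2}>0$ and $D_2\cdot C_{D_1}>0$ (both from the Claim in Lemma~\ref{r_i=2}), Lemma~\ref{2faces}$(a)$ shows that $\langle[D_1],[D_2]\rangle\cap\Mov(X)\neq\{0\}$, so the hypothesis of Lemma~\ref{description} fails. The key observation is that the type of $D_1$ is dictated \emph{only} by the incidence $B_1\cap C\neq\emptyset$ through \cite{blowup}, and this incidence is guaranteed by $D_1\cdot C_{D_2}>0$ alone, independently of the dimension of $\langle[D_1],[D_2]\rangle\cap\Mov(X)$. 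The points to verify carefully are that $Y$ is genuinely Fano (so that the classification of fixed divisors in \cite{blowup} is available) and that $\w{D}_1$ is not $k$-exceptional — automatic since $D_1\neq D_2$ — so that $B_1$ is indeed a divisor meeting $C$.
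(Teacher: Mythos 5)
Your proposal is correct, but it takes a genuinely different route from the paper. The reduction is the same in both (Lemma \ref{r_i=2} gives $r_i=2$, Cor.~\ref{face} makes both components fixed divisors, Lemma \ref{montenero} restricts the types to $(3,2)$ and $(3,1)^{sm}$), but from there the arguments diverge. The paper excludes two components of type $(3,1)^{sm}$ by a direct analysis of the general fiber $F$ of $\tilde{f}$ over the curve $B_1$: it shows, via a MMP with scaling of $-K$, that the SQM produced by Lemma \ref{unosolo} factors into $K$-negative flips whose indeterminacy loci (exceptional planes and lines, Lemma \ref{basic1}) miss $F$, deduces that every curve in $E\cap F$ has class in $\R_{\geq 0}[C_E]$, and derives a contradiction from a curve $\Gamma\subset F\cap E\cap\wi{E}$ whose class would then be proportional to both $[C_E]$ and $[C_{\wi{E}}]$, against Th.-Def.~\ref{long}$(e)$. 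You instead prove the sharper local claim that the companion of a $(3,1)^{sm}$ component must be of type $(3,2)$, by extracting the $(3,1)^{sm}$ sub-argument from the proof of Lemma \ref{description}: the Claim inside Lemma \ref{r_i=2} gives $D_1\cdot C_{D_2}>0$, which (since $C_{D_2}$ lies in the isomorphism locus and is the transform of a line in a fiber of $k$) forces $\w{D}_1\cap\Exc(k)\neq\emptyset$, hence $B\cap C\neq\emptyset$; then \cite[Lemma 5.11]{blowup} and the transfer via Lemma \ref{basic1}$(a)$ and \cite[Lemma 2.18]{blowup} give the type of $D_1$. Your key observation is correct and well taken: Lemma \ref{description} cannot be applied as a black box, since by Lemma \ref{2faces}$(a)$ the Claim gives both $D_1\cdot C_{D_2}>0$ and $D_2\cdot C_{D_1}>0$, so $\langle[D_1],[D_2]\rangle\cap\Mov(X)\neq\{0\}$; but the relevant portion of its proof visibly uses only the incidence $B\cap C\neq\emptyset$ together with $Y$ being a smooth Fano $4$-fold and $B$ fixed, all of which you have. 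The one point you should still verify at the source is the exact hypotheses of \cite[Lemma 5.11]{blowup} itself, since your argument outsources the decisive step to it; the paper's own invocation inside Lemma \ref{description} supports your reading. In exchange for this heavier reliance on \cite{blowup}, your proof avoids the MMP-with-scaling and fiber-tracking technicalities; the paper's longer argument is self-contained relative to the tools developed here, and its flip/fiber analysis is essentially the template reused in the proof of Th.~\ref{3folds}.
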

\begin{proof}
We consider for simplicity $i=1$. By Lemma \ref{r_i=2} $f^*B_1$  has two irreducible components, and by Lemma \ref{montenero} they are of type  $(3,2)$ or  $(3,1)^{sm}$. We have to show that they cannot be both of type $(3,1)^{sm}$.

Let us choose a SQM $\ph\colon X\dasharrow\w{X}$ such that $\tilde{f}:=f\circ\ph^{-1}\colon \w{X}\to S$ is regular, $K$-negative, and special (see Lemma \ref{basic2}). Let $E,\wi{E}\subset\w{X}$ be the irreducible components of $\tilde{f}^*(B_1)$, and
 $F\subset\w{X}$ a general fiber of $\tilde{f}$ over the curve $B_1$.

Suppose that $E$ is of type $(3,1)^{sm}$.
By Lemma \ref{unosolo} and Th.-Def.~\ref{long}, 
we have a diagram:
$$\xymatrix{X\ar@{-->}[rrd]_f\ar@{-->}[r]^{\ph}&{\w{X}}\ar[dr]^{\tilde{f}}\ar@{-->}[r]^{\psi}&{\wi{X}}
\ar[d]^{\hat{f}}\ar[r]^k&{\w{X}_1}\ar[dl]^{f_1}\\
&&S&
}$$
where $\psi$ is SQM and $k$ is the blow-up of a smooth irreducible curve $C\subset \w{X}_1$, with exceptional divisor the transform  of $E\subset\w{X}$, and $f_1(C)=B_1$.

Recall from the proof of Lemma \ref{unosolo} that $\psi$ arises from a MMP for $E$, relative to $\tilde{f}$. Since $\tilde{f}$ is $K$-negative, one can use a MMP with scaling of $-K_{\w{X}}$ (see \cite[\S 3.10]{BCHM}, and for this specific case \cite[Prop.~2.4]{codim} which can be adapted to the relative setting), so that $\psi$ factors as a sequence of $K$-negative flips, relative to $\tilde{f}$. Then by Lemma \ref{basic1}$(b)$ and $(c)$, the indeterminacy locus of $\psi$ is a disjoint union of exceptional planes, and is disjoint from the indeterminacy locus of $\ph^{-1}$.

In particular, 
the indeterminacy locus of $\psi$  is contracted to 
 points by $\tilde{f}$. Since $F$ is a general fiber of $\tilde{f}$ over $B_1$, it must be contained in the open subset where $\psi$ is an isomorphism, and $\wi{F}:=\psi(F)\subset\wi{X}$ is a general fiber of $\hat{f}$ over $B_1$. 
 We also note that $F$ is contained in the open subset where $\ph^{-1}$ is an isomorphism: otherwise there should be an exceptional line contained in $E$, and this would give an exceptional line contained in
 $\Exc(k)$, contradicting \cite[Rem.~5.6]{blowup}. 

Every irreducible component of $\Exc(k)\cap\wi{F}$ is a fiber of $k$ over $C$.  We deduce that 
the  transform in $X$ of any curve in $E\cap F$ has class in $\R_{\geq 0}[C_{E}]$.

We have $\dim F\cap E\cap \wi{E}\geq 1$, let $\Gamma$ be an irreducible curve in $F\cap E\cap \wi{E}$. If $\wi{E}$ were of type $(3,1)^{sm}$ too, the transform of $\Gamma$ in $X$ should have class in both $\R_{\geq 0}[C_{E}]$ and
$\R_{\geq 0}[C_{\wi{E}}]$.
 This would imply that the classes of $C_{E}$ and $C_{\wi{E}}$ are proportional, and this is impossible by Th.-Def.~\ref{long}$(e)$.
 Therefore $E$ and $\wi{E}$ cannot be both of type $(3,1)^{sm}$.
\end{proof}
\begin{proof}[Proof of Th.~\ref{surf}]
We can assume that $\rho_X\geq 13$, otherwise we have $(ii)$.

By Prop.~\ref{factorization1} $f$ factors as a special rational contraction $g\colon X\dasharrow T$ followed by a birational map $T\to S$. 
There exists a SQM $\ph\colon X\dasharrow \w{X}$ such that $\w{X}$ is smooth and the composition $\tilde{g}:=g\circ\ph^{-1}\colon \w{X}\to T$ is regular, $K$-negative and special (see Lemma \ref{basic2}); in particular $T$ is a smooth surface by Lemma \ref{2}.
$$\xymatrix{X\ar@{-->}[r]^{\ph}\ar@{-->}[d]_f\ar@{-->}[dr]^{g}  &{\w{X}}\ar[d]^{\tilde{g}}\\S&T\ar[l]
}$$
 Finally $g$ has $r_i=2$ for every $i=1,\dotsc,m$ (notation as in \ref{notation}) by Lemma \ref{r_i=2}.

\medskip

Suppose that $m=0$, equivalently that $\tilde{g}$ is quasi-elementary. 
If $g$ is regular, then \cite[Th.~1.1(i)]{fanos} together with $\rho_X\geq 13$ yield that $X$ is a product of surfaces, so we have $(i)$.

Assume instead that $g$ is not regular, and let $F\subset X$ be a general fiber of $f$, which is also a general fiber of $g$. Since the indeterminacy locus of $\ph^{-1}$ has dimension $1$ (see Lemma \ref{basic1}$(a)$), it does not meet a general fiber of $\tilde{g}$. This means
that $F$ is contained in the open subset where $\ph$ is an isomorphism, and $\ph(F)$ is a general fiber of $\tilde{g}$. By Lemma \ref{stephane} and \cite[Cor.~3.9 and its proof]{eff} we have that $F$ is a smooth del Pezzo surface with $\rho_F\leq 8$ and 
$$\rho_X=\dim\N(F,X)+\rho_T\leq\rho_F+\rho_T\leq 8+\rho_T.$$
In particular $\rho_T\geq 13-8=5$.
Then \cite[Prop.~4.1 and its proof]{eff} imply that $g$ is not elementary and that $T$ is a del Pezzo surface. Therefore $\rho_X\leq 17$, $\dim\N(F,X)=\rho_X-\rho_T\geq 13-9=4$,
and
$S$ is a smooth del Pezzo surface too.
So we have $(iii)$.

\medskip

Suppose now that $m\geq 1$.
By Lemma \ref{chitarre},
 $(\tilde{g})^*B_1$ has an irreducible component $E$ 
which is a fixed divisor 
 of type $(3,2)$.
We have $(\tilde{g})_*\N(E,\w{X})=\R[B_1]$, so that $\codim\N(E,\w{X})\geq\rho_T-1$. If $\rho_T>1$, then we get $(i)$  by Lemma \ref{prop32}. 

Let us assume that $\rho_T=1$. Then $T\cong\pr^2$, because $T$ is a smooth rational surface. Moreover the birational map $T\to S$ must be an isomorphism, hence $S\cong\pr^2$ and $f$ is special, and we get $(iv)$.
\end{proof}
\section{Fano $4$-folds to $3$-folds}\label{sec_3folds}
\noindent In this section we study rational contractions from a Fano $4$-fold to a $3$-dimensional target, and show the following.
\begin{thm}\label{3folds}
Let $X$ be a smooth Fano $4$-fold. If there exists  a rational contraction $X\dasharrow Y$ with $\dim Y=3$, then  either $X$ is a product of surfaces, or $\rho_X\leq 12$.
\end{thm}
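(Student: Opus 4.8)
The strategy is to reduce to the special case via Proposition~\ref{factorization1}, and then to analyze the combinatorial structure of the fibration using the classification of fixed divisors. First I would assume $\rho_X\geq 13$, since otherwise $\rho_X\leq 12$ and we are done, and I would aim to show that in this range $X$ must be a product of surfaces. By Prop.~\ref{factorization1} the rational contraction $X\dasharrow Y$ factors through a special rational contraction $g\colon X\dasharrow T$ with $T\to Y$ birational and $\dim T=3$. Using Lemma~\ref{basic2} I would choose a SQM $\ph\colon X\dasharrow\w{X}$ so that $\tilde g:=g\circ\ph^{-1}\colon\w{X}\to T$ is regular, $K$-negative, and special; then by Lemma~\ref{2} the target $T$ has isolated locally factorial canonical singularities. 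By Lemma~\ref{r_i=2} each reducible pullback $g^*B_i$ has exactly $r_i=2$ components, and by Lemma~\ref{montenero} every component is a fixed divisor of type $(3,2)$.

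The heart of the argument is then to control $\rho_X$ through Cor.~\ref{boundrho}, which gives
\[
\rho_X=\rho_T+\dim\N(F,X)+\sum_{i=1}^m(r_i-1)=\rho_T+\dim\N(F,X)+m,
\]
where $F$ is the general fiber of $g$. Since $\dim T=3$, the general fiber $F$ is a curve; as $\tilde g$ is $K$-negative onto a $3$-fold and $F$ is smooth rational with $-K_X\cdot F=2$, one finds $\dim\N(F,X)=1$. Thus $\rho_X=\rho_T+1+m$, and the task becomes bounding both $\rho_T$ and $m$. For $m$, I would exploit the fact that for each $i$ the two components $E_i,\w E_i$ of $g^*B_i$ are fixed divisors of type $(3,2)$ with $E_i\cdot C_{\w E_i}>0$ (from the Claim in Lemma~\ref{r_i=2}), so by Lemma~\ref{2faces} and Lemma~\ref{description} the pairs are forced into very restricted numerical configurations; combining this with Lemma~\ref{prop32}, whenever one of these type-$(3,2)$ divisors $D$ satisfies $\N(D,\w X)\subsetneq\N(\w X)$ we immediately get $\rho_X\leq 12$ or a product of surfaces. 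This should force either a small Picard number or $\delta_X$ large.

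The main obstacle, as I see it, is bounding $\rho_T$, the Picard number of the singular $3$-fold target. Here I would invoke Th.~\ref{buonconsiglio}: if $\delta_X\geq 3$ then $X$ is already a product of surfaces and we are done, so I may assume $\delta_X\leq 2$, whence $\rho_X\leq 12$ by case~$(iii)$ of that theorem—unless $X$ is a product of surfaces. More precisely, the whole point is to show that $\rho_X\geq 13$ together with $X$ not a product of surfaces is contradictory. I expect the clean route is: assuming $X$ is not a product of surfaces, Th.~\ref{buonconsiglio} gives $\delta_X\leq 2$ and hence $\rho_X\leq 12$ directly, but this only works if we can first produce a fixed divisor $D$ of type $(3,2)$ witnessing $\delta_X$, i.e.\ with $\N(D,X)$ of large codimension. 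The key step is therefore to use one of the type-$(3,2)$ components $E$ of some $g^*B_i$, observe that $\tilde g_*\N(E,\w X)=\R[B_1]$ forces $\codim\N(E,\w X)\geq\rho_T-1$, and then apply Lemma~\ref{prop32} exactly as in the surface case (proof of Th.~\ref{surf}). If $m\geq 1$ and $\rho_T\geq 2$ this yields $\rho_X\leq 12$ or a product; the remaining cases $m=0$ (so $\tilde g$ quasi-elementary, handled by the quasi-elementary theory where Th.~\ref{main} is already known) and $\rho_T=1$ (where $T$ is a terminal Fano $3$-fold of Picard number one, severely constraining $\rho_X$) must be dispatched separately. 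Finally, Th.~\ref{rationalfibration} follows as an immediate consequence.
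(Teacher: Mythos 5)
Your setup coincides with the paper's: reduce to a special rational contraction via Prop.~\ref{factorization1}, make it regular, $K$-negative and special via Lemma~\ref{basic2}, obtain $r_i=2$ and the count $\rho_X=\rho_Y+1+m$ (Lemmas~\ref{klt}, \ref{r_i=2}, Cor.~\ref{boundrho}), and note that all components of the $f^*B_i$ are fixed divisors of type $(3,2)$ (Lemma~\ref{montenero}). But your pivotal step fails. You transplant the surface-case mechanism from the proof of Th.~\ref{surf}: ``$\tilde{g}_*\N(E,\w{X})=\R[B_1]$ forces $\codim\N(E,\w{X})\geq\rho_T-1$''. That works when $\dim T=2$ because $B_1$ is then a \emph{curve}; when $\dim T=3$, the component $E$ dominates the prime \emph{divisor} $B_1$, so all you get is $\tilde{g}_*\N(E,\w{X})\subseteq\N(B_1,T)$, and $\N(B_1,T)$ can be all of $\N(T)$ — no codimension bound whatsoever. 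Indeed the paper's proof explicitly has to allow $\N(E_1,\w{X})=\N(\w{X})$ (step \ref{prel}), and that is precisely where its real work begins. Consequently your case division is wrong: Lemma~\ref{prop32} does not dispose of ``$m\geq 1$ and $\rho_T\geq 2$'', and the residual hard case is not $\rho_T=1$ but $m=1$ with $\rho_Y$ arbitrary, i.e.\ $\rho_X=\rho_Y+2$. (Your hope that Lemmas~\ref{2faces} and \ref{description} ``force restricted configurations'' also gives no bound on $m$: two type-$(3,2)$ divisors with vanishing mutual intersection numbers are allowed by case $(c)$ of Lemma~\ref{description}.)

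Concretely, two ingredients are missing. For $m\geq 2$ the paper uses Lemma~\ref{gor}: the divisors $B_1,\dotsc,B_m$ are pairwise \emph{disjoint} in $Y$, so a component $E_1$ of $f^*B_1$ is disjoint from a component of $f^*B_2$, whence $\N(E_1,\w{X})\subsetneq\N(\w{X})$ by Rem.~\ref{caffe}, and Lemma~\ref{prop32} applies — this disjointness, which you never invoke, is the correct substitute for your failed codimension argument. For $m=1$, $\rho_X=\rho_Y+2$, the paper needs a long geometric analysis of $Y$ itself: small elementary contractions of $Y$ are $K$-trivial with exceptional locus a union of $(-1,-1)$-curves (step \ref{small}); elementary divisorial contractions of $Y$ are blow-ups of smooth points (step \ref{divisorial}, a delicate argument through flips, exceptional lines, and \cite[Lemma 5.5]{ou}); $Y$ admits no elementary rational contraction of fiber type (via Th.~\ref{surf}); hence $-K_Y$ is nef and $Y$ is weak Fano; one then contracts to a minimal $Y_1$ with $\rho_{Y_1}\leq 2$, shows $Y\to Y_1$ is the blow-up of $r$ distinct smooth points, and bounds $r\leq 8$ using Prokhorov's bound $(-K_{Y_1})^3\leq 72$ from \cite{prok}, giving $\rho_X=\rho_{Y_1}+r+2\leq 12$. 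None of this, nor any alternative for it, appears in your sketch, so the proposal does not prove the theorem.
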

\begin{proof}
If $\delta_X\geq 3$ the statement follows from Th.~\ref{buonconsiglio}, so
we can assume that $\delta_X\leq 2$; we also assume that $\rho_X\geq 7$.
By Prop.~\ref{factorization1}, we can suppose that the map $X\dasharrow Y$ is special. Moreover by Lemma \ref{basic2} we can factor it as 
$$X\stackrel{\ph}{\dasharrow}\w{X}\stackrel{f}{\la}Y,$$
where $\ph$ is a SQM, $\w{X}$ is smooth, and $f$ is regular, $K$-negative and special. 

By Lemmas \ref{klt} and \ref{gor} we have $\rho_X=\rho_Y+m+1$, $r_1=\cdots=r_m=2$, and
 the divisors $B_1,\dotsc,B_m$ are pairwise disjoint in $Y$  (notation as in \ref{notation}). 
For $i=1,\dotsc,m$ the irreducible components of 
 $f^*B_i$ are fixed divisors of type $(3,2)$ by Lemma \ref{montenero}. 

If $\rho_X-\rho_Y\geq 3$, then $m\geq 2$. Let $E_1,E_2$ be the irreducible components of $f^*B_1$, and $W$ an irreducible component of $f^*B_2$.
Since $B_1\cap B_2=\emptyset$, we have $E_1\cap W=\emptyset$, so that 
$\N(E_1,\w{X})\subsetneq\N(\w{X})$ by Rem.~\ref{caffe}, and
this implies the statement by Lemma \ref{prop32}.

If instead $\rho_X-\rho_Y=1$, then $f$ is elementary, and $\rho_X\leq 11$ by \cite[Th.~1.1]{eff}. 

\bigskip

We are left with the case where
 $\rho_X-\rho_Y=2$ and $m=1$, which we assume from now on. We will adapt the proof of \cite[Th.~1.1]{eff} of the elementary case to the case $\rho_X-\rho_Y=2$, and divide the proof in several steps. Since $m=1$,  we
set for simplicity $B:=B_1$.
\begin{parg}\label{prel}
If $\N(E_1,\w{X})\subsetneq\N(\w{X})$ we conclude as before, so we can assume that $\N(E_1,\w{X})=\N(\w{X})$; this implies that $\N(B,Y)=\N(Y)$. 

By Lemma \ref{klt}, $E_1\cup E_2$ is covered by curves of anticanonical degree $1$. Since an exceptional line cannot meet such curves (see Lemma \ref{basic1}$(b)$), we deduce that
$\ell\cap (E_1\cup E_2)=\emptyset$ for every exceptional line $\ell\subset\w{X}$. 

Notice that even if $f$ is not elementary, by speciality it does not have fibers of dimension $3$, and has at most isolated fibers of dimension $2$. Moreover
$Y$ is locally factorial and has (at most) isolated canonical singularities, by Lemma \ref{2}. 
More precisely, $\Sing(Y)$ is contained in the images of the $2$-dimensional fibers of $f$ (this is due to Ando, see \cite[Th.~4.1 and references therein]{AWaview}).

Since $\w{X}$ is smooth and $Y$ is locally factorial, it is easy to see that $f^*B=E_1+E_2$.

Finally, since $X$ is Fano, by \cite[Lemma 2.8]{prokshok} there exists a $\Q$-divisor $\Delta_Y$ on $Y$ such that $(Y,\Delta_Y)$ is a klt
 log Fano, so that $-K_Y$ is big.
\end{parg}
\begin{step}\label{small}
Let $g\colon Y\to Y_0$ be a small elementary contraction. Then $\Exc(g)$ is the disjoint union of smooth rational curves lying in the smooth locus of $Y$, with normal bundle $\ol_{\pr^1}(-1)^{\oplus 2}$; in particular $K_Y\cdot\NE(g)=0$. 
\end{step}
\begin{proof}
Exactly the same proof  as the one of \cite[Lemma 4.5]{eff} applies, with the only difference that, in the notation of 
\cite[Lemma 4.5]{eff}, $\dim\N(\w{U}/U)$ could be bigger than $2$. We take $\tau$ to be any extremal ray of $\NE(\w{U}/U)$ 
 not contained $\NE(g_{|\w{U}})$. 
\end{proof}
\begin{step}\label{divisorial}
Let $g\colon Y\to Y_0$ be an elementary divisorial contraction. Then $g$ is the blow-up of a smooth point of $Y_0$; in particular $-K_Y\cdot\NE(g)>0$.
\end{step}
\begin{proof}
Set $G:=\Exc(g)\subset Y$.
Since $g$ is elementary and $\dim g(G)\leq 1$, we have $\dim\N(G,Y)\leq 2$; on the other hand  $\dim\N(B,Y)=\rho_Y=\rho_X-2\geq 5$ (see \ref{prel}), so $G\neq B$, and  $D:=f^*G$ is a prime divisor in $\w{X}$, different from $E_1$ and $E_2$, with $\dim\N(D,\w{X})\leq \dim\ker f_*+\dim\N(G,Y)\leq 2+2=4$.

Since $G$ is fixed, also $D$ is a fixed divisor in $\w{X}$; let  $D_X\subset X$ be the transform of $D$. 
\begin{pargtwo}\label{no32}
We show that $D$ is not of 
of type $(3,2)$. 
Otherwise, as in the proof of Lemma \ref{prop32} we see that
$\dim\N(D_X,X)=\dim\N({D},\w{X})\leq 4$. On the other hand we have $\delta_X\leq 2$ and $\rho_X\geq 7$, a contradiction.
\end{pargtwo}
\begin{pargtwo} We show that $g$ is of type $(2,0)$. By contradiction, suppose that $g$ is of type $(2,1)$. As in  \cite[proof of Lemma 4.6]{eff}, we show that there is an open subset $\w{U}\subseteq\w{X}$ such that $D\cap\w{U}$ is covered by curves of anticanonical degree $1$. 
By \cite[Lemma 2.8(3)]{blowup}, $D_X$ still has a non-empty open subset covered by curves of anticanonical degree $1$; this implies that $D_X$ and $D$ are of type $(3,2)$ by \cite[Lemma 2.18]{blowup}, a contradiction to \ref{no32}.
\end{pargtwo}
\begin{pargtwo}\label{frankfurt}
Thus $g$ is of type $(2,0)$; set $p:=g(G)\in Y_0$. 

Since $\N(B,Y)=\N(Y)$ by \ref{prel},
we must have $G\cap B\neq \emptyset$ by Rem.~\ref{caffe}. 
Therefore $p\in g(B)$, hence $g^*(g(B))=B+aG$ with $a>0$, and $(g\circ f)^*(g(B))
=E_1+E_2+aD$ (see again \ref{prel}).

As in \cite[proof of Lemma 4.6]{eff}, we get a diagram:
$$\xymatrix{{\w{X}}\ar[d]^f\ar@{-->}[r]^{\psi}&{\wi{X}}\ar[r]^k&{\w{X}_1}\ar[dl]^{f_1}
\\Y\ar[r]^g&{Y_0}&
}$$
where $\psi$ is a sequence of $D$-negative flips relative to $g\circ f$, $k$ is an elementary divisorial contraction with exceptional divisor the transform $\wi{D}\subset\wi{X}$ of $D$, and $f_1$ is a contraction of fiber type with $\dim\ker (f_1)_*=2$. By \ref{no32} and Th.-Def.~\ref{long}, $k$ is of type  $(3,0)^{sm}$, $(3,0)^Q$, or $(3,1)^{sm}$; in particular $\w{X}_1$ has at most one isolated locally factorial and terminal singularity. Moreover $f_1$ is special, so that $Y_0$ has locally factorial, canonical singularities by Lemma \ref{lf}.
\end{pargtwo}
\begin{pargtwo}\label{induction}
Let us consider the factorization of $\psi$ as a sequence of $D$-negative flips relative to $g\circ f$:
$$\xymatrix{{\w{X}=Z_0}\ar@{-->}[r]^{\sigma_1}\ar[drr]_{g\circ f}&{\cdots}\ar@{-->}[r]
&{Z_{i-1}}\ar[d]_{\zeta_{i-1}}\ar@{-->}[r]^{\sigma_i}&{Z_{i}}\ar@{-->}[r]\ar[dl]_{\zeta_{i}}&{\cdots}\ar@{-->}[r]^{\sigma_n}
&
{Z_n=\wi{X}}\ar[dlll]^{f_1\circ k}\\
&&{Y_0}&&}$$
With a slight abuse of notation, we still denote by $D,E_1,E_2$ the transforms of these divisors in $Z_i$, for $i=0,\dotsc,n$.

We show by induction on $i=0,\dotsc,n$ that $\sigma_i$ is $K$-negative
and that $(E_1+E_2)\cdot\ell\leq 0$
 for every exceptional line $\ell\subset Z_i$.
For $i=0$, this holds by \ref{prel}.

Suppose that the statement is true for $i-1$. Let $R$ and $R'$ be the small extremal rays of $\NE(Z_{i-1})$ and $\NE(Z_i)$ respectively corresponding to the flip  $\sigma_i$. 
By the commutativity of the diagram above and by \ref{frankfurt}, we have $E_1+E_2+aD=\zeta_{i-1}^*(g(B))$, hence $(E_1+E_2+aD)\cdot R=0$, where $a>0$.
On the other hand
 $D\cdot R<0$, thus
 $(E_1+E_2)\cdot R>0$ and $(E_1+E_2)\cdot R'<0$.

 If $-K_{Z_{i-1}}\cdot R\leq 0$, then by \cite[Rem.~3.6(2)]{eff} there exists an exceptional line $\ell_0\subset Z_{i-1}$ such that $[\ell_0]\in R$, therefore $(E_1+E_2)\cdot\ell_0>0$, contradicting the induction assumption. Hence $-K_{Z_{i-1}}\cdot R> 0$ and $\sigma_i$ is $K$-negative.

Finally if $\ell\subset Z_i$ is an exceptional line, by \cite[Rem.~4.2]{eff} we have either $\ell\subset\dom\sigma_i^{-1}$, or $\ell\cap\dom\sigma_i^{-1}=\emptyset$. In the first case $\sigma_i^{-1}(\ell)$ is an exceptional line in $Z_{i-1}$, and we deduce that $(E_1+E_2)\cdot\ell\leq 0$. In the second case, we must have $[\ell]\in R'$ and hence $(E_1+E_2)\cdot\ell< 0$.
\end{pargtwo}
\begin{pargtwo}\label{flights}
 By \ref{induction}, $\psi$ factors as a sequence of $K$-negative flips, and Lemma \ref{basic1}$(c)$ yields that
 the indeterminacy locus of $\psi^{-1}$ is a disjoint union of exceptional lines
$\ell_1,\dotsc,\ell_s$.
\end{pargtwo}
\begin{pargtwo}\label{fiber}
Set $F_p:=f_1^{-1}(p)$. We show that  $\dim F_p=1$.

Note that $\w{X}$ and $\wi{X}$ are isomorphic outside the fibers of $g\circ f$ and $f_1\circ k$ over $p$, respectively. In $\w{X}$ we have  $(g\circ f)^{-1}(p)=D$, and the indeterminacy locus of $\psi$ must be contained in $D$. In 
$\wi{X}$ we have
 $(f_1\circ k)^{-1}(p)=k^{-1}(F_p)=\wi{D}\cup\overline{F}_p$, where $\overline{F}_p$ is the transform of the components of $F_p$ not contained in $k(\wi{D})$. On the other hand, by \ref{flights} we also
 have $k^{-1}(F_p)=\wi{D}\cup \ell_1\cup\cdots\cup\ell_s$.
This shows that $\overline{F}_p\subseteq \ell_1\cup\cdots\cup\ell_s$, in particular
$\dim\overline{F}_p\leq 1$, and since $\dim k(\wi{D})\leq 1$ (see \ref{frankfurt}), we conclude that $\dim F_p=1$.

 We have also shown that the transform in $\wi{X}$ of any irreducible  component of $F_p$ not contained in $k(\wi{D})$ must be one of the $\ell_i$'s.
\end{pargtwo}
\begin{pargtwo}\label{f1}
We show that $f_1$ is $K$-negative. Since $f$ is $K$-negative and $f_{|\w{X}\smallsetminus D}\cong (f_1)_{|\w{X}_1\smallsetminus F_p}$, we only have to check the fiber $F_p$. Let $\Gamma$ be an irreducible component of $F_p$.

If $\Gamma\not\subseteq k(\wi{D})$, then by \ref{fiber} we can assume that the transform of $\Gamma$ in $\wi{X}$ is $\ell_1$. 
Since $k^{-1}(F_p)$ is connected and $\ell_1,\dotsc,\ell_s$ are pairwise disjoint, we have 
$\wi{D}\cdot\ell_1>0$; notice also that $K_{\wi{X}}\cdot\ell_1=1$. Thus $-K_{\w{X}_1}\cdot\Gamma> 0$ because $k^*(-K_{\w{X}_1})=-K_{\wi{X}}+b\wi{D}$ with $b\in\{2,3\}$ (see \ref{frankfurt}).

If instead $\Gamma\subseteq k(\wi{D})$, then by \ref{frankfurt}
 $k$ must be of type $(3,1)^{sm}$ and $\Gamma=k(\wi{D})$. By \cite[Lemma 5.25]{blowup} there is a SQM $\ph_1\colon \w{X}_1\dasharrow X_1$ where $X_1$ is a Fano $4$-fold, and $\Gamma$ is contained in the open subset where $\ph_1$ is an isomorphism, so that $-K_{\w{X}_1}\cdot \Gamma=-K_{X_1}\cdot\ph_1(\Gamma)>0$.
\end{pargtwo}
\begin{pargtwo}
By \ref{frankfurt}, \ref{fiber}, and \ref{f1}, $\w{X}_1$ has isolated locally factorial and terminal singularities, $Y_0$ has  locally factorial canonical singularities, $f_1$ is $K$-negative, and $\dim F_p=1$.
Then \cite[Lemma 5.5]{ou} yields that
  $p$ is a smooth point of $Y_0$ (note that in \cite{ou} the contraction is supposed to be elementary, but this is used only to conclude that $Y_0$ is locally factorial, which here we already know). 
  
  In particular $p$ is a terminal singularity, hence $g$ is $K$-negative. The possibilities 
for $(G,-K_{\w{X}_1|G})$ are given in \cite[Th.~1.19]{AWaview};
moreover we know that $G$ is Gorenstein, and by adjunction that $-K_{G}\cdot C\geq 2$ for every curve $C\subset G$. Going through the list, it is easy to see that the possibilities for $G$ are $\pr^2$, $\pr^1\times\pr^1$, and the quadric cone. In the first two cases, $G\subset Y_{reg}$, and it follows from \cite[Cor.~3.4]{moriannals} that $G\cong\pr^2$ and  
$g$ is the blow-up of $p$. 

Suppose instead that $G$ is isomorphic to a quadric cone $Q$. Then the  normal bundle of $G$ has to be $\ol_Q(-1)$, and as in \cite[p.~164]{moriannals} and \cite[proof of Th.~5]{cutkosky} one sees that $\ma{I}_p\ol_Y=\ol_Y(-G)$ where $\ma{I}_p$ is the ideal sheaf of $p$ in $Y_0$, so that $g^{-1}(p)=G$ scheme-theoretically. Then $g$ factors through the blow-up of $p$, and being $g$ elementary, it must be the blow-up of $p$, which yields $G\cong\pr^2$ and hence a contradiction.
\qedhere
\end{pargtwo}
\end{proof}
\begin{parg}\label{nofibertype}
If $Y$ has an elementary rational contraction of fiber type $Y\dasharrow Z$, then $\rho_Z=\rho_X-3\geq 4$, in particular $Z$ is a surface. The composition $X\dasharrow Z$ is a rational contraction with $\rho_X-\rho_Z=3$, and we can apply Th.~\ref{surf}. If $(i)$ or $(ii)$ hold, we have the statement. If $(iii)$ holds, then $\rho_X\geq 13$ and $Z$ is a del Pezzo surface, so that $\rho_Z\leq 9$, which is impossible. Finally $(iv)$  cannot hold because $\rho_Z>1$.

 Therefore we can assume that $Y$ does not have elementary rational contractions of fiber type.
\end{parg}
\begin{parg}
Let $R$ be an extremal ray of $\NE(Y)$. By \ref{nofibertype} the associated contraction cannot be of fiber type, thus it is birational, either small of divisorial. By \ref{small} and   \ref{divisorial},  $-K_Y\cdot R\geq 0$. Since $Y$ is log Fano, $\NE(Y)$ is closed and polyhedral, and we conclude that $-K_Y$ is nef and
 $Y$ is a weak Fano variety (see \ref{prel}). 
\end{parg}
\begin{parg}\label{zero}
Let $Y\dasharrow \w{Y}$ be a SQM. Then the composition $X\dasharrow \w{Y}$ is again a special rational contraction with $\rho_X-\rho_{\w{Y}}=2$, so 
all the previous steps apply to $\w{Y}$ as well.
As in \cite[p.~622]{eff}, using \ref{small} and \ref{divisorial} one shows that 
if $E\subset Y$ is a fixed prime divisor, then $E$ can
contain at most finitely many
 curves of anticanonical degree zero.
\end{parg}
\begin{parg}
Let us consider all the contracting birational maps $Y\dasharrow Y_1$ with $\Q$-factorial target, and choose one with $\rho_{Y_1}$ minimal. 

Suppose that $\rho_{Y_1}\geq 3$.
By minimality, $Y_1$ has an elementary rational contraction of fiber type $Y_1\dasharrow Z$, and
$Z$ must be a surface with $\rho_Z=\rho_{Y_1}-1\geq 2$. 
The composition $X\dasharrow Z$ is a rational contraction, let  $F\subset X$ be  a general fiber. 
The general fiber of $Y\dasharrow Z$ is a smooth rational curve $\Gamma\subset Y$, and $\dim\N(F,X)\leq\dim\N(\Gamma,Y)+(\rho_X-\rho_Y)=3$.
Thus we get the statement by
Th.~\ref{surf}. 

Therefore we can assume that $\rho_{Y_1}\leq 2$.
\end{parg}
\begin{parg}
By \cite[Lemma 4.18]{blowup}, we can factor the map  $Y\dasharrow Y_1$ as $Y\dasharrow Y'\to Y_1$, where $Y\dasharrow Y'$ is a SQM, and $Y'\to Y_1$ is a sequence of elementary divisorial contractions. Now notice that the composition 
$X\dasharrow Y'$ is again a special rational contraction with $\rho_X-\rho_{Y'}=2$, so up to replacing $Y$ with $Y'$, we can assume that the map $a\colon Y\dasharrow Y_1$ is regular and is a sequence of $r:=\rho_Y-\rho_{Y_1}$ elementary divisorial contractions:
$$Y=W_0\stackrel{a_1}{\la}W_1\stackrel{a_2}{\la} W_2\la\cdots\la W_r=Y_1.$$
Let us show that the exceptional loci of these maps are all disjoint, so that $a$ is just the blow-up of $r$ distinct smooth points of $Y_1$.

We know by \ref{divisorial} that $a_1$ is the blow-up of a smooth point $w_1\in W_1$, and since $-K_Y$ is nef, it is easy to see that if $C\subset W_1$ is an irreducible curve containing $w_1$, then $-K_{W_1}\cdot C\geq 2$. 

Suppose that $\Exc(a_2)$ contains $w_1$. Then $a_2$ is $K$-negative, and $\Exc(a_2)$ cannot be covered by curves of anticanonical degree one. By \cite[Th.~1.19]{AWaview} this implies that $\Exc(a_2)\cong\pr^2$ and $(-K_{W_1})_{|\Exc(a_2)}\cong\ol_{\pr^2}(2)$. Then the transform of $\Exc(a_2)$ would be a fixed prime divisor covered by curves of anticanonical degree zero, which is impossible by \ref{zero}.
Proceeding in the same way, we conclude that the exceptional loci of the maps $a_i$ are all disjoint.

Now $Y_1$ is weak Fano with isolated locally factorial, canonical singularities, and we have $(-K_{Y_1})^3\leq 72$ by \cite{prok}. Therefore
$$0<(-K_Y)^3=(-K_{Y_1})^3-8r,$$ 
which yields $r\leq 8$ and $\rho_X=\rho_{Y_1}+r+2\leq 12$.\qedhere
\end{parg}
\end{proof}
Th.~\ref{main} is a straightforward consequence of Theorems \ref{surf} and \ref{3folds}.
\begin{proof}[Proof of Th.~\ref{rationalfibration}]
Let $X_0\subseteq X$ and $Y_0\subseteq Y$ be open subsets such that $f_0:=f_{_|X_0}\colon X_0\to Y_0$ is a projective morphism. Up to taking the Stein factorization, we can assume that $f_0$ is a contraction. 
Let $A\in\Pic(Y)$ be ample and consider $H:=f^*A\in\Pic(X)$. Then $H$ is a movable divisor, hence it yields a rational contraction $f'\colon X\dasharrow Y'$. It is easy to see that $f'_{|X_0}=f_0$, in particular $\dim Y'=3$. Then the statement follows from Th.~\ref{3folds}.
\end{proof}
\section{Fano $4$-folds to $\pr^1$}
\noindent Let $X$ be a Fano $4$-fold and $f\colon X\dasharrow\pr^1$ be a rational contraction; notice that $f$ is always special. In the following proposition we collect the information that we can give on $f$.
\begin{proposition}
Let $X$ be a smooth Fano $4$-fold and $f\colon X\dasharrow\pr^1$ be a rational contraction. Let $F_1,\dotsc,F_m$ be the reducible fibers of $f$.
Then one of the following holds:
\begin{enumerate}[$(i)$]
\item $\rho_X\leq 12$;
\item $X$ is a product of surfaces;
\item $\rho_X\leq m+10$, $f$ is not regular, and  every $F_i$ has two irreducible components, which are fixed divisors of type $(3,1)^{sm}$ or $(3,0)^Q$.
\end{enumerate}
\end{proposition}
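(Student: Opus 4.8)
The plan is to peel off the cases controlled by the Lefschetz defect and then, in the remaining range, to force situation $(iii)$. Recall first that $f$ is automatically special, since $\pr^1$ is smooth. By Theorem~\ref{buonconsiglio} I dispose of $\delta_X\geq 2$: if $\delta_X=3$ then $\rho_X\leq 6$ and if $\delta_X=2$ then $\rho_X\leq 12$ (both giving $(i)$), while $\delta_X\geq 4$ forces $X$ to be a product by Theorem~\ref{trento}, giving $(ii)$. So I assume $\delta_X\leq 1$ and $X$ is not a product; if moreover $\rho_X\leq 12$ we are in $(i)$, hence I assume $\rho_X\geq 13$ and aim at $(iii)$. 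Using Lemma~\ref{basic2} I pass to a SQM $\ph\colon X\dasharrow\w{X}$ such that $\w{f}:=f\circ\ph^{-1}\colon\w{X}\to\pr^1$ is regular, $K$-negative and special. Then $r_i=2$ for every $i$ by Lemma~\ref{r_i=2}, so every reducible fibre has exactly two components, and Corollary~\ref{boundrho} gives $\rho_X=1+\dim\N(\w{F},\w{X})+m$, where $\w{F}$ is a general fibre of $\w{f}$.

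Next I would prove that $f$ is \emph{not} regular. If it were a morphism then, $X$ being Fano, $f$ would be $K$-negative, and its general fibre $F$ would be a smooth Fano $3$-fold which is a prime divisor of $X$. By the classification of Fano threefolds $\rho_F\leq 10$, so $\dim\N(F,X)\leq\rho_F\leq 10$, whence $\delta_X\geq\codim\N(F,X)=\rho_X-\dim\N(F,X)\geq\rho_X-10\geq 3$, contradicting $\delta_X\leq 1$. This is precisely the point distinguishing the present situation from a regular fibration: when $f$ is a morphism the honest fibre divisor of $X$ already pins down a large $\delta_X$, while for a genuinely rational $f$ the fibre of $\w{f}$ meets the (one–dimensional) indeterminacy loci and this estimate is unavailable.

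To determine the types of the fibre components, fix a component $E$ of $\w{f}^{*}B_i$. By Corollary~\ref{face} $E$ is a fixed divisor, and since it is contracted to a point of $\pr^1$ we have $\N(E,\w{X})\subseteq\ker\w{f}_{*}\subsetneq\N(\w{X})$. If $E$ were of type $(3,2)$, Lemma~\ref{prop32} would yield $\rho_X\leq 12$ or $X$ a product, both excluded. If $E$ were of type $(3,0)^{sm}$, then $E\cong\pr^3$ has Picard number $1$ and contains no exceptional line, so, arguing as in Lemma~\ref{prop32} (controlling the exceptional planes possibly created along $\ph$ via \cite[Cor.~3.14]{eff}), $\dim\N(E_X,X)=\dim\N(E,\w{X})=1$ for the transform $E_X$ of $E$ in $X$; this gives $\delta_X\geq\rho_X-1\geq 12$, again contradicting $\delta_X\leq 1$. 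Hence every component of every $\w{f}^{*}B_i$ is of type $(3,0)^Q$ or $(3,1)^{sm}$, as claimed.

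It remains to prove $\rho_X\leq m+10$, i.e.\ $\dim\N(\w{F},\w{X})\leq 9$. As $\w{F}$ is a smooth Fano $3$-fold, $\rho_{\w{F}}\leq 10$, with equality only for $\w{F}\cong S\times\pr^1$, where $S$ is the del Pezzo surface of degree $1$. The hard point is to exclude $\dim\N(\w{F},\w{X})=10$. The plan is that in this case the $\pr^1$-factors of the fibres $\w{F}\cong S\times\pr^1$ form a covering family of rational curves of $\w{X}$, whose numerical class spans an extremal ray of the cone $\mov(X)$ of movable curves; the associated rational contraction of fibre type $X\dasharrow V$ then has relative dimension $1$ and $\dim V=3$, so Theorem~\ref{3folds} forces $X$ to be a product or $\rho_X\leq 12$, a contradiction. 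Granting $\dim\N(\w{F},\w{X})\leq 9$, Corollary~\ref{boundrho} gives $\rho_X=1+\dim\N(\w{F},\w{X})+m\leq m+10$, completing $(iii)$. I expect this last step — constructing the fibration $X\dasharrow V$ out of the $\pr^1$-factor family, and in particular checking that this curve class is extremal in $\mov(X)$ and that the resulting map is a genuine rational contraction onto a threefold — to be the main obstacle.
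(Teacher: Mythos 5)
Your skeleton matches the paper's: reduce to $r_i=2$ via Lemma~\ref{r_i=2}, factor through a SQM by Lemma~\ref{basic2}, exclude type $(3,2)$ components via Lemma~\ref{prop32}, rule out regularity of $f$ through the Lefschetz defect, and conclude with Cor.~\ref{boundrho}; all of those steps are sound and are essentially the paper's. The first genuine gap is your exclusion of type $(3,0)^{sm}$ components. The chain $\dim\N(E_X,X)=\dim\N(E,\w{X})=1$ is unjustified: the model in which the transform of $E$ is isomorphic to $\pr^3$ is the model $\wi{X}$ provided by Th.--Def.~\ref{long}, not the model $\w{X}$ on which $f$ becomes regular, so already $\dim\N(E,\w{X})=1$ has no basis; and transferring $\dim\N(\cdot)$ to $X$ via \cite[Cor.~3.14]{eff} requires that the transform in $X$ contain \emph{no exceptional planes}. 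For type $(3,2)$ this is supplied by \cite[Rem.~2.17(2)]{blowup} --- that is exactly how the proof of Lemma~\ref{prop32} works --- but there is no analogous statement for type $(3,0)^{sm}$: the transform in $X$ of such a divisor can contain exceptional planes of the SQM and acquire extra curve classes. Indeed, if your computation were correct it would give $\delta_X\geq \rho_X-1$, hence $\rho_X\leq 9$ by Th.~\ref{trento}, for \emph{every} Fano $4$-fold with a $(3,0)^{sm}$ fixed divisor --- strictly stronger than the bound $\rho_X\leq 12$ of \cite[Th.~5.40]{blowup}, which is precisely what the paper invokes at this point. So this step needs that citation (or a genuinely new argument), not the dimension count you propose.

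The second, and main, gap is the one you flagged yourself: the exclusion of $\dim\N(\w{F},\w{X})=10$. The paper's mechanism is different from yours and much shorter: since $f$ is not regular, the SQM $\ph$ is nontrivial, and arguing as in \cite[proof of Cor.~3.9]{eff} one shows the stronger statement $\rho_F\leq 9$, i.e.\ the non-regularity itself (via the exceptional planes/lines of Lemma~\ref{basic1}) is incompatible with a general fibre of maximal Picard number; Cor.~\ref{boundrho} then gives $\rho_X\leq m+10$ directly. Your alternative plan --- quotient by the family of $\pr^1$-factors of $\w{F}\cong S_1\times\pr^1$ --- is not hopeless, but as written it is a plan, not a proof, of the decisive step. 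Two remarks if you want to complete it: you do not need extremality of the class in $\mov(X)$ at all, since Th.~\ref{rationalfibration} (available at this point, and logically independent of this Proposition) only requires a dominant rational map, regular and proper on an open subset, onto a $3$-fold; and the needed geometric input is that when $\dim\N(\w{F},\w{X})=10$ the map $\N(\w{F})\to\N(\w{X})$ is injective, so that, $\w{F}$ being a nef fibre class with $\w{F}\cdot\ell=0$ for the curves $\ell$ of the family, members meeting a fibre lie in that fibre and coincide with the $\pr^1$-factors there; this makes the a.e.-proper quotient $3$-dimensional, after which Th.~\ref{3folds} (via Th.~\ref{rationalfibration}) yields the contradiction. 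Without an argument of this kind, or the paper's appeal to \cite[proof of Cor.~3.9]{eff}, the bound $\rho_X\leq m+10$ in $(iii)$ is not established.
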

\begin{proof}
We can assume that $\rho_X\geq 7$, so that $r_i=2$ for $i=1,\dotsc,m$ by 
Lemma \ref{r_i=2}.
By Lemma \ref{basic2} we can factor $f$ as $X\stackrel{\ph}{\dasharrow}X'\stackrel{f'}{\to}\pr^1$ where $\ph$ is a SQM, $X'$ is smooth, and  $f'$ is regular and $K$-negative. 

If some $F_i$ has a component of type $(3,0)^{sm}$, then we get $(i)$ by \cite[Th.~5.40]{blowup}.

If some $F_i$ has a component of type $(3,2)$, let $E\subset X'$ be its transform. Then $\N(E,X')\subseteq\ker(f')_*\subsetneq\N(X')$, so we get $(i)$ or $(ii)$ by Lemma \ref{prop32}.

We are left with the case where every component of every $F_i$ is of type $(3,1)^{sm}$ or $(3,0)^Q$.
The general fiber $F$ of $f'$ is a smooth Fano $3$-fold, so that $\rho_F\leq 10$ by Mori and Mukai's classification (see \cite[Coroll.~7.1.2]{fanoEMS}).
If $f$ is regular, then $\ph$ is an isomorphism, and $\rho_X\leq \rho_F+\delta_X$, so we get $(i)$ or $(ii)$  by Th.~\ref{buonconsiglio}.

If insteaf $f$ is not regular, then
 as in \cite[proof of Cor.~3.9]{eff} one shows that in fact $\rho_F\leq 9$. Therefore Cor.~\ref{boundrho} yields $\rho_X\leq m+10$, and we have $(iii)$.
\end{proof}
\providecommand{\noop}[1]{}
\providecommand{\bysame}{\leavevmode\hbox to3em{\hrulefill}\thinspace}
\providecommand{\MR}{\relax\ifhmode\unskip\space\fi MR }
\providecommand{\MRhref}[2]{%
  \href{http://www.ams.org/mathscinet-getitem?mr=#1}{#2}
}
\providecommand{\href}[2]{#2}

\end{document}